\DeclareFontFamily{U}{shuffle}{}
\DeclareFontShape{U}{shuffle}{m}{n}{ <-8>shuffle7 <8->shuffle10}{}
\newcommand{\si}{\sigma}
\newcommand\Res{{\rm Res}}
\newcommand{\bfk}{{\boldsymbol{\sl{k}}}}
\newcommand{\bfx}{{\boldsymbol{\sl{x}}}}
\newcommand\bfsi{{\boldsymbol \sigma}}
\def\int{\displaystyle\!int}
\def\lim{\displaystyle\!lim}
\def\sum{\displaystyle\!sum}
\def\sup{\displaystyle\!sup}
\def\inf{\displaystyle\!inf}
\def\cap{\displaystyle\!cap}
\def\max{\displaystyle\!max}
\def\min{\displaystyle\!min}
\def\frac{\displaystyle\!frac}
\let\oldsection\section
\renewcommand\section{\setcounter{equation}{0}\oldsection}
\DeclareMathOperator{\Li}{Li}
\def\N{\mathbb{N}}
\def\Z{\mathbb{Z}}
\def\Q{\mathbb{Q}}
\def\S{\widetilde{S}}
\def\ze{\zeta}
\theoremstyle{plain}
\newtheorem{thm}{Theorem}[section]
\newtheorem{lem}[thm]{Lemma}
\newtheorem{cor}[thm]{Corollary}
\newtheorem{con}[thm]{Conjecture}
\theoremstyle{definition}
\newtheorem{exa}[thm]{Example}
\begin{document}
\title{\bf Contour Integrations and Parity Results of Hurwitz-type Cyclotomic Euler Sums}
\author{
{Hongyuan Rui\thanks{Email: rhy626514@163.com}}\\[1mm]
\small School of Mathematics, Sichuan University,\\ \small Chengdu 610064, P.R. China\\
}

\date{}
\maketitle

\noindent{\bf Abstract.} In this paper, we investigate the parity of three class of Hurwitz-type cyclotomic Euler sums using the methods of contour integration and residue computation, and derive explicit parity formulas for linear, quadratic, and some higher-order cases. Based on their connection with cyclotomic multiple Hurwitz polylogarithm functions, we further obtain certain parity results for these functions. At the end of the paper, we propose two conjectures regarding the parity and symmetry of multiple Hurwitz polylogarithm functions of arbitrary depth.
\medskip

\noindent{\bf Keywords}: Contour integration; Hurwitz-type Cyclotomic Euler sums; Residue theorem; Parity result; Cyclotomic multiple zeta values; Cyclotomic multiple $t$-values; Cyclotomic multiple $T$-values.
\medskip

\noindent{\bf AMS Subject Classifications (2020):} 11M32, 11M99.

\section{Introduction}

In 1735, Euler solved the famous \emph{Basel problem} and discovered the following well-known formula:
\begin{align}\label{Baselquestion}
\frac1{1^2}+\frac{1}{2^2}+\frac1{3^2}+\frac1{4^2}+\cdots=\frac{\pi^2}{6}.
\end{align}
Furthermore, he provided a more general formula:
\begin{align}\label{EulerBasel}
\frac1{1^{2m}}+\frac{1}{2^{2m}}+\frac1{3^{2m}}+\frac1{4^{2m}}+\cdots=\frac{(-1)^{m-1}B_{2m}(2\pi)^{2m}}{2(2m)!}\quad (m\in \N),
\end{align}
where $B_k$ denotes the \emph{Bernoulli numbers} defined by
\[\frac{x}{e^x-1}=\sum_{n=0}^{\infty}{\frac{B_n}{n!}x^n}.\]
The series on the left side of the above formula is now known as the value of the Riemann zeta function at positive even integers, where the \emph{Riemann zeta function} is defined as:
\begin{align}\label{defnRiemannzeta}
\zeta(s):=\sum_{n=1}^\infty \frac1{n^s}\quad (\Re(s)>1).
\end{align}
In a paper published in 1776, Euler \cite{Berndt1985,Euler1776} studied double series of the following form (now referred to as \emph{double zeta star values} or \emph{linear Euler sums}):
\begin{align}\label{defnlinearEulerSums}
S_{p,q}:=\sum_{n=1}^\infty \frac{H_n^{(p)}}{n^q}\quad (p,q\in \N\ \text{and}\ q>1),
\end{align}
where $H_n^{(p)}$ stands the {\emph{generalized harmonic number}} of order $p$ defined by
\[H_n^{(p)}:=\sum_{k=1}^n \frac{1}{k^p}.\]
In \cite{Euler1776}, Euler examines in detail relations
between the double series \eqref{defnlinearEulerSums} and the series of the form \eqref{EulerBasel}. Research on this series had already appeared in correspondence between Goldbach and Euler as early as around 1742-1743. Euler elaborated a method to show that the double series $S_{p,q}$ can be evaluated in terms of zeta values in the following cases: $p=1$, $p=q$, $(p,q)=(2,4),(4,2)$ and $p+q\leq 13$ odd. He even conjectured that all $S_{p,q}$ when $p+q$ is odd can be expressed in terms of zeta values, a result that was proven by Nielsen \cite{Nielsen1906} in 1906.

In 1998, Flajolet and Salvy developed a contour integral theory to systematically study Euler sums of the following general form:
\begin{align}
{S_{{p_1p_2\cdots p_k},q}} := \sum\limits_{n = 1}^\infty  {\frac{{H_n^{\left( {{p_1}} \right)}H_n^{\left( {{p_2}} \right)} \cdots H_n^{\left( {{p_k}} \right)}}}
{{{n^{q}}}}},
\end{align}
where $p_j\in \N$ and $q\geq 2$. The quantity $p_1+\cdots+p_k+q$ is called the ``weight" of the sum, and the quantity $k$ is called the
``degree". When $k\geq 2$, it is referred to as a \emph{nonlinear Euler sum}. One of the most important results in their paper was the proof of the parity theorem for nonlinear Euler sums, which can be stated as follows: \emph{A nonlinear Euler sum $S_{p_1\cdots p_k,q}$ reduces to a combination of sums of lower orders whenever the weight $p_1+\cdots+p_k+q$ and the order $k$ are of the same parity.} Moreover, Flajolet and Salvy highlighted the connection between these Euler sums and multiple zeta values. By utilizing the stuffle relations (see \cite{H2000}), they demonstrated that an Euler sum of weight $w$ and degree $k$ can be expressed as a $\Z$-coefficient linear combination of multiple zeta values with weight $w$ and depth not exceeding $k+1$.  The explicit formulas of Euler sums via MZVs were established by Xu and Wang in \cite{Xu-Wang2020}. The \emph{multiple zeta values} (MZVs) are defined by (\cite{H1992,DZ1994})
\begin{align}
\zeta(\bfk)\equiv \zeta(k_1,\ldots,k_r):=\sum_{0<n_1<\cdots<n_r} \frac{1}{n_1^{k_1}\cdots n_r^{k_r}},
\end{align}
where $k_1,\ldots,k_r$ are positive integers and $k_r\geq 2$ (i.e. admissible). Here $r$ and $k_1+\cdots+k_r$ are called the \emph{depth} and \emph{weight}, respectively. The concept of multiple zeta values was independently introduced in the early 1990s by both Hoffman \cite{H1992} and Zagier \cite{DZ1994}. Due to their profound connections with various mathematical and physical disciplines such as knot theory, algebraic geometry, and theoretical physics, the study of multiple zeta values has attracted considerable interest from numerous mathematicians and physicists. Over more than three decades of development, this field has yielded abundant research achievements. For a comprehensive survey of results obtained before 2016, readers may refer to Zhao's excellent monograph \cite{Z2016}. Similarly, the investigation of parity properties of multiple zeta values constitutes a significant research topic in this field. In 1996, Borwein and Girgensohn \cite{Borwein-Girgensohn-1996} proposed the following parity conjecture: When the weight $w$ and the depth $r$ are of the opposite parity, $\zeta(\bfk)$ is $\Q[\pi^2]$ -linear combination of multiple zeta values of depth at most $r-1$. Tsumura \cite{Tsu-2004} provided the first proof of this conjecture, which was subsequently followed by new proofs and further generalizations from various researchers, including: Brown \cite{Brown2021}, Ihara-Kaneko-Zagier \cite{IKZ2006}, Jarossay \cite{Jarossay2020}, Machide \cite{Machide2016} and Panzer \cite{Panzer2017}. Regrettably, none of the aforementioned proofs were able to provide a general explicit formula. However, Hirose \cite{Hirose2025} recently established an explicit formula for the parity of multiple zeta values by employing the theory of multitangent functions developed by Bouillot \cite{Bouillot2014}.

Indeed, the contour integral theory and methodology developed by Flajolet and Salvy can be further extended and generalized. Recently, Xu and Wang \cite{Xu-Wang2020,Xu-Wang2023} investigated the parity of two classes of Euler sums involving odd harmonic numbers (referred to as Euler $T$-sums and Euler $\tilde{S}$-sums) using the contour integral theory developed by Flajolet and Salvy. These sums are defined as follows:
\begin{align}
&T_{p_1,p_2,\ldots,p_k,q}^{\si_1,\si_2,\ldots,\si_k,\si}
    =\sum_{n=1}^\infty\si^{n-1}\frac{h_{n-1}^{(p_1)}(\si_1)h_{n-1}^{(p_2)}(\si_2)\cdots
        h_{n-1}^{(p_k)}(\si_k)}{(n-1/2)^q}\,,\label{Tsum.Unify}\\
&\S_{p_1,p_2,\ldots,p_k,q}^{\si_1,\si_2,\ldots,\si_k,\si}
    =\sum_{n=1}^\infty\si^{n-1}\frac{h_n^{(p_1)}(\si_1)h_n^{(p_2)}(\si_2)\cdots
        h_n^{(p_k)}(\si_k)}{n^q}\,,\label{Stsum.Unify}
\end{align}
where $(p_1,p_2,\ldots,p_k,q)\in\mathbb{N}^{k+1}$ and $(\si_1,\si_2,\ldots,\si_k,\si)\in\{\pm 1\}^{k+1}$ with $(q,\si)\neq (1,1)$. The $h_n^{(p)}(\si)$ denotes the (alternating) odd harmonic number defined by
\begin{align*}
h_n^{(p)}(\si):=\sum_{k=1}^n \frac{\si^k}{(k-1/2)^p}.
\end{align*}
In his recent review paper \cite{Z2024}, Zhao presented a wealth of research progress on sum formulas for multiple $t$-values, multiple $T$-values, and the related level 2 multiple zeta values.

Xu and Wang investigated the parity properties of Euler $T$-sums and Euler $\tilde{S}$-sums, establishing explicit parity formulas for multiple $t$-values and multiple $T$-values with depth$\leq 3$ (see \cite[Thms. 40 and 52]{Xu-Wang2023}). For $\bfk:=(k_1,\ldots,k_r)\in \N^r$ and $\bfsi:=(\si_1,\ldots,\si_r)\in \{\pm 1\}^r$ and $(k_r,\si_r)\neq (1,1)$, the (alternating) \emph{multiple $t$-values} (MtVs) and  the (alternating) \emph{multiple $T$-values} (MTVs) are defined by (\cite{H2019,Xu-Wang2023})
\begin{align}
&t(\bfk;\bfsi):=\sum_{0<n_1<\cdots<n_r\atop n_i\ \text{odd}} \frac{2^{k_1+\cdots+k_r}\si_1^{n_1}\cdots \si_r^{n_r}}{n_1^{k_1}\cdots n_r^{k_r}}=\sum_{0<n_1<\cdots<n_r} \frac{\si_1^{n_1}\cdots \si_r^{n_r}}{(n_1-1/2)^{k_1}\cdots (n_r-1/2)^{k_r}},\\
&T(\bfk;\bfsi):=2^r\sum\limits_{0<n_1<\cdots<n_r} \frac{\si_1^{n_1}\cdots \si_r^{n_r}}{(2n_1-1)^{k_1} (2n_2-2)^{k_2}\cdots(2n_r-r)^{k_r}}.
\end{align}
Very recently, the authors of this paper \cite{Rui-Xu2025} investigated the parity results of cyclotomic Euler sums using the method of contour integration. They established several explicit formulas for cases of degree $\leq 3$, from which they derived certain parity properties of cyclotomic multiple zeta values. The \emph{cyclotomic Euler sum} is defined by
\begin{align}\label{defn-CyESs}
S_{p_1,\ldots, p_k;q}(x_1,\ldots,x_k;x):=\sum_{n=1}^\infty \frac{\zeta_{n}(p_1;x_1)\zeta_{n}(p_2;x_2)\cdots \zeta_{n}(p_k;x_k)}{n^q}x^n,
\end{align}
where $p_1,\ldots,p_k,q\in \N$ and $x_1,\ldots,x_k,x$ are all roots of unity with $(q,x)\neq (1,1)$. Here $\zeta_n(p;x)$ stands the finite sum of polylogarithm function defined by
\begin{align}
\zeta_n(p;x):=\sum_{k=1}^n \frac{x^k}{k^p}\quad (p\in \N,\ x\in [-1,1]),
\end{align}
and the \emph{polylogarithm function} $\Li_p(x)$ is defined by
\begin{align}
\Li_{p}(x):=\lim_{n\rightarrow \infty}\zeta_n(p;x)= \sum_{n=1}^\infty \frac{x^n}{n^p}\quad (x\in[-1,1],\ (p,x)\neq (1,1),\ p\in \N).
\end{align}
For any $(k_1,\dotsc,k_r)\in\N^r$, the classical \emph{multiple polylogarithm function} with $r$-variables is defined by
\begin{align}\label{defn-mpolyf}
\Li_{k_1,\dotsc,k_r}(x_1,\dotsc,x_r):=\sum_{0<n_1<\cdots<n_r} \frac{x_1^{n_1}\dotsm x_r^{n_r}}{n_1^{k_1}\dotsm n_r^{k_r}}
\end{align}
which converges if $|x_j\cdots x_r|<1$ for all $j=1,\dotsc,r$. It can be analytically continued to a multi-valued meromorphic function on $\mathbb{C}^r$ (see \cite{Zhao2007d}). In particular, if $(k_1,\dotsc, k_r)\in\N^r$ and $x_1,\ldots,x_r$ are $N$th roots of unity, we call them \emph{cyclotomic multiple zeta values of level $N$} which converges if $(k_r,x_r)\ne (1,1)$ (see \cite{YuanZh2014a} and \cite[Ch. 15]{Z2016}). Similarly, for $\bfk=(k_1,\ldots,k_r)\in \N^r$ and $\bfx=(x_1,\ldots,x_r)$ (all $x_j$ are $N$-th roots of unity) with $(k_r,x_r)\neq (1,1)$, we define the \emph{cyclotomic multiple $t$-value of level $N$} $t(k_1,\ldots,k_r;x_1,\ldots,x_r)$ and \emph{cyclotomic multiple $T$-value of level $N$} by
\begin{align}
&t(\bfk;\bfx):=\sum_{0<n_1<\cdots<n_r} \frac{x_1^{n_1}\cdots x_r^{n_r}}{(n_1-1/2)^{k_1}\cdots (n_r-1/2)^{k_r}},\\
&T(\bfk;\bfx):=2^r\sum\limits_{0<n_1<\cdots<n_r} \frac{x_1^{n_1}\cdots x_r^{n_r}}{(2n_1-1)^{k_1} (2n_2-2)^{k_2}\cdots(2n_r-r)^{k_r}}.
\end{align}

In this paper, we define the following three classes of \emph{Hurwitz-type cyclotomic Euler sums}:
\begin{align}
&S^{(a)}_{p_1,p_2,\ldots,p_r;q}(x_1,x_2,\ldots,x_r;x):=\sum_{n=1}^\infty \frac{\zeta_{n}(p_1;x_1;a)\zeta_{n}(p_2;x_2;a)\cdots \zeta_{n}(p_r;x_r;a)}{(n+a)^q}x^n,\label{defnS-ES1}\\
&\tilde{S}^{(a)}_{p_1,p_2,\ldots,p_r;q}(x_1,x_2,\ldots,x_r;x):=\sum_{n=1}^\infty \frac{\zeta_{n}(p_1;x_1;a)\zeta_{n}(p_2;x_2;a)\cdots \zeta_{n}(p_r;x_r;a)}{n^q}x^n,\label{defnS-ES2}\\
&R^{(a)}_{p_1,p_2,\ldots,p_r;q}(x_1,x_2,\ldots,x_r;x):=\sum_{n=1}^\infty \frac{\zeta_{n}(p_1;x_1)\zeta_{n}(p_2;x_2)\cdots \zeta_{n}(p_r;x_r)}{(n+a)^q}x^n,\label{defnS-ES3}
\end{align}
where $p_1,\ldots,p_r,q\in \N$, $x_1,\ldots,x_r,x$ are all roots of unity with $(q,x)\neq (1,1)$ and $a\in \mathbb{C}\setminus \Z$. We naturally refer to the aforementioned three types of Euler sums as \emph{Hurwitz-type cyclotomic Euler $S$-sums, Euler $\tilde{S}$-sums}, and \emph{Euler $R$-sums}, respectively.
Here $\zeta_n(p;x;a)$ represents the finite sum of Hurwitz polylogarithm function, defined as:
\begin{align}
\zeta_n(p;x;a):=\sum_{k=1}^n \frac{x^k}{(k+a)^p}\quad (p\in \N,\ x\in [-1,1],\ a\notin\N^{-}:=\{-1,-2,-3,\ldots\}),
\end{align}
and the \emph{Hurwitz polylogarithm function} $\Li_p(x;a+1)\ (a\in \mathbb{C}\setminus \N^-)$ is defined by
\begin{align}
\Li_{p}(x;a+1):=\lim_{n\rightarrow \infty}\zeta_n(p;x;a)= \sum_{n=1}^\infty \frac{x^n}{(n+a)^p}\quad (x\in[-1,1],\ (p,x)\neq (1,1),\ p\in \N).
\end{align}
More general, for any $(k_1,\dotsc,k_r)\in\N^r$, the  \emph{multiple Hurwitz polylogarithm function} with $r$-variables is defined by
\begin{align}\label{defn-mpolyf}
\Li_{k_1,\dotsc,k_r}(x_1,\dotsc,x_r;a+1):=\sum_{0<n_1<\cdots<n_r} \frac{x_1^{n_1}\dotsm x_r^{n_r}}{(n_1+a)^{k_1}\dotsm (n_r+a)^{k_r}}
\end{align}
which converges if $|x_j\cdots x_r|<1$ for all $j=1,\dotsc,r$, and $a\neq -1,-2,-3,\ldots$. If all $x_j = 1$, then it reduces to the \emph{multiple Hurwitz zeta function} (\cite{AkiyamaIshikawa,MXY2021}). It should be emphasized that in paper \cite{XC2025} by the Xu and in paper \cite{Wang-Xu2025} by Xu and Wang, the contour integration method was employed to investigate the parity properties of the following three types of cyclotomic Euler sums:
\begin{align*}
	S_{p_1,p_2,\ldots,p_r;q}(x_1,x_2,\ldots,x_r;x)&\equiv S^{(-1/2)}_{p_1,p_2,\ldots,p_r;q}(x_1,x_2,\ldots,x_r;x),\\
	\tilde{S}_{p_1,p_2,\ldots,p_r;q}(x_1,x_2,\ldots,x_r;x)&\equiv\tilde{S}^{(-1/2)}_{p_1,p_2,\ldots,p_r;q}(x_1,x_2,\ldots,x_r;x),\\
	R_{p_1,p_2,\ldots,p_r;q}(x_1,x_2,\ldots,x_r;x)&\equiv R^{(1/2)}_{p_1,p_2,\ldots,p_r;q}(x_1,x_2,\ldots,x_r;x).
\end{align*} Correspondingly, they obtained parity results for certain level 2 variants of cyclotomic multiple zeta values, such as cyclotomic multiple $t$-values and cyclotomic multiple $T$-values.

In comparison, studies on Hurwitz-type Euler sums remain relatively scarce. Some relevant results can be found in \cite{AC2020,BBD2008,XC2019} and references therein. In \cite{BBD2008}, D.~Borwein, J.~M.~Borwein, and D.M. Bradley proved that
\begin{align}\label{for-para-harm-num-hur-ze}
\sum\limits_{n=1}^{\infty}\frac{H_{n}^{(2m+1)}}{n^{2}-a^{2}}
&=\frac{1}{2}\sum\limits_{n=1}^{\infty}\frac{1}{n^{2m+1}(n^{2}-a^{2})}
+\frac{1}{2a}\sum\limits_{k=0}^{m}\zeta(2k)\left(\zeta(2m-2k+2;a)-\zeta(2m-2k+2;-a)\right)\nonumber\\
&\quad+\frac{1}{4a}\pi \cot(\pi a)\Big(\zeta(2m+1;a)+\zeta(2m+1;-a)-2\zeta(2m+1)\Big),
\end{align}
where $\ze(s;a)$ is \emph{Hurwitz zeta function} defined by ($a\neq 0,-1,-2,-3,\ldots$)
\[\ze(s;a):=\sum_{n=0}^\infty \frac{1}{(n+a)^s}\quad (\Re(s)>1).\]
Alzer and Choi \cite{AC2020} discussed the analytic continuations and mingling connections of parametric linear Euler sums
$S_{p,q}^{++}(a,b),S_{p,q}^{+-}(a,b),S_{p,q}^{-+}(a,b),S_{p,q}^{--}(a,b)$, which are defined by
\begin{align}\label{Defn-PLESs}
\begin{aligned}
&S_{p,q}^{++}(a,b):=\sum_{n=1}^\infty \frac{H_n^{(p)}(a)}{(n+b)^q},\quad S_{p,q}^{+-}(a,b):=\sum_{n=1}^\infty \frac{H_n^{(p)}(a)}{(n+b)^q}(-1)^{n-1},\\
&S_{p,q}^{-+}(a,b):=\sum_{n=1}^\infty \frac{{\bar H}_n^{(p)}(a)}{(n+b)^q},\quad S_{p,q}^{--}(a,b):=\sum_{n=1}^\infty \frac{{\bar H}_n^{(p)}(a)}{(n+b)^q}(-1)^{n-1},
\end{aligned}
\end{align}
where $a,b\in \mathbb{C}\setminus\N^-\ (\N^-:=\{-1,-2,-3,-4,\ldots\}), p,q\in \mathbb{C}$ are adjusted so that the involved series can converge. Here $H_n^{(p)}(a)$ and ${\bar H}_n^{(p)}(a)$ are the \emph{parametric harmonic numbers} of order $p$ and the \emph{alternating parametric harmonic numbers} of order $p$, respectively, defined by
\begin{align}
&H_n^{\left( p\right)}\left( a \right): = \sum\limits_{j = 1}^n {\frac{1}
{{{{\left( {j + a} \right)}^p}}}}\quad\text{and}\quad{\bar H}_n^{\left( p\right)}\left( a \right): = \sum\limits_{j = 1}^n {\frac{(-1)^{j-1}}
{{{{\left( {j + a} \right)}^p}}}}\quad (n\in \N,\ p\in \mathbb{C},\ a\in \mathbb{C}\setminus\N^-).\label{Defn-APHNSs}
\end{align}

The primary objective of this paper is to investigate the parity properties of three classes of Hurwitz-type cyclotomic Euler sums defined in \eqref{defnS-ES1}, \eqref{defnS-ES2} and \eqref{defnS-ES3} using the contour integral method. We establish a general parity theorem and derive explicit formulas for both linear and quadratic cases. Furthermore, by specializing parameter values, we obtain certain parity results for cyclotomic multiple $t$-values and cyclotomic multiple $T$-values.

\section{Preliminary Knowledge}
In their systematic study of nonlinear Euler sums in 1998, Flajolet and Salvy \cite{Flajolet-Salvy} primarily employed contour integration by evaluating residues of integrals of the form $\oint_{(\infty)} r(s)\xi(s) ds=0$, where $\oint_{(\infty)}$ denotes integration along large circles, that is, the limit of integrals $\oint_{|s|=\rho}$, and $\xi(s)$ is referred to as a kernel function, defined as
\begin{align*}
\xi(s)=\frac{\pi\cot(\pi s)\psi^{(p_1-1)}(-s)\psi^{(p_2-1)}(-s)\cdots \psi^{(p_k-1)}(-s)}{(p_1-1)!(p_2-1)!\cdots(p_k-1)!}
\end{align*} and $r(s)$ is a basis function, defined as $r(s)=1/s^q\ (\forall p_j\in\N,\ q\in \N\setminus \{1\})$. Here $\psi(s)$ denotes the the \emph{digamma function} defined by
\begin{align}\label{defn-classical-psi-funtion}
\psi(s)=-\gamma-\frac1{s}+\sum_{k=1}^\infty \left(\frac1{k}-\frac{1}{s+k}\right),
\end{align}
where $s\in\mathbb{C}\setminus \N_0^-$ and $\N_0^-:=\N^-\cup\{0\}=\{0,-1,-2,-3,\ldots\}$, where $\gamma$ denotes the \emph{Euler-Mascheroni constant}. The kernel function $\xi(s)$ is defined to satisfy the following two conditions:
 1). $\xi(s)$ is meromorphic in the whole complex plane. 2). $\xi(s)$ satisfies $\xi(s)=o(s)$ over an infinite collection of circles $\left| s \right| = {\rho _k}$ with ${\rho _k} \to \infty $.
A crucial step in their approach involved the application of \emph{Cauchy's residue theorem} as follows:
\begin{lem}\emph{(cf.\ \cite[Lem. 2.1]{Flajolet-Salvy})}\label{lem-redisue-thm}
Let $\xi(s)$ be a kernel function and let $r(s)$ be a rational function which is $O(s^{-2})$ at infinity. Then
\begin{align}\label{residue-}
\sum\limits_{\alpha  \in O} {{\mathop{\rm Res}}{{\left( {r(s)\xi(s)},\alpha  \right)}}}  + \sum\limits_{\beta  \in S}  {{\mathop{\rm Res}}{{\left( {r(s)\xi(s)}, \beta  \right)}}}  = 0,
\end{align}
where $S$ is the set of poles of $r(s)$ and $O$ is the set of poles of $\xi(s)$ that are not poles $r(s)$. Here ${\mathop{\rm Re}\nolimits} s{\left( {r(s)},\alpha \right)} $ denotes the residue of $r(s)$ at $s= \alpha.$
\end{lem}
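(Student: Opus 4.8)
The plan is to realize the left-hand side of \eqref{residue-} as the total sum of residues of the meromorphic function $r(s)\xi(s)$ and to show that this total vanishes by pushing a contour integral out to infinity. First I would fix the sequence of radii $\rho_k\to\infty$ furnished by the second defining property of a kernel function, along which $\xi(s)=o(s)$; since the poles of $r(s)\xi(s)$ form a discrete set, I may assume (after a harmless perturbation of each $\rho_k$) that no circle $|s|=\rho_k$ passes through a pole. For each $k$, Cauchy's residue theorem then gives
\begin{align*}
\frac{1}{2\pi i}\oint_{|s|=\rho_k} r(s)\xi(s)\,ds=\sum_{|\alpha|<\rho_k}\Res\big(r(s)\xi(s),\alpha\big),
\end{align*}
where the sum ranges over all poles $\alpha$ of $r(s)\xi(s)$ lying inside the circle.

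The key analytic estimate is that this contour integral tends to $0$ as $k\to\infty$. On $|s|=\rho_k$ we have $r(s)=O(\rho_k^{-2})$ by hypothesis and $\xi(s)=o(\rho_k)$ by the kernel condition, so that $r(s)\xi(s)=o(\rho_k^{-1})$ uniformly on the circle. Bounding the modulus of the integral by the circumference $2\pi\rho_k$ times this uniform bound gives a quantity of size $2\pi\rho_k\cdot o(\rho_k^{-1})=o(1)$, which forces the contour integral to vanish in the limit.

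Finally I would identify the poles and pass to the limit. Every pole of the product $r(s)\xi(s)$ is a pole of $r$ or of $\xi$, so the complete pole set is the union $S\cup O$, disjoint by the very definition of $O$, while any singularity cancelled in the product contributes residue $0$ and may be harmlessly discarded. Letting $k\to\infty$ in the residue identity, each pole is eventually enclosed and the partial sums over $|\alpha|<\rho_k$ converge to the two residue sums appearing in the statement; combined with the vanishing of the contour integral this yields $\sum_{\alpha\in O}\Res(r(s)\xi(s),\alpha)+\sum_{\beta\in S}\Res(r(s)\xi(s),\beta)=0$, as claimed.

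I expect the only genuine obstacle to be the justification that the truncation error vanishes \emph{uniformly}: one must check that the $o(s)$ decay of $\xi$ along the chosen circles is genuinely uniform in the argument of $s$, so that multiplying by the $O(s^{-2})$ factor and by the circumference $2\pi\rho_k$ still produces a null sequence. This is precisely where the kernel hypothesis does its work, and where the explicit $\pi\cot(\pi s)$ and digamma structure of $\xi$ would be invoked to secure the uniform bound in any concrete instance.
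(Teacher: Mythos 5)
Your proof is correct and is essentially the standard argument: the paper itself does not prove this lemma but imports it verbatim from Flajolet and Salvy, and their original proof is exactly your contour-pushing argument --- apply the residue theorem on the circles $|s|=\rho_k$, then kill the integral with the bound $2\pi\rho_k\cdot O(\rho_k^{-2})\cdot o(\rho_k)=o(1)$, and let the exhaustion pick up all poles (with $S$ finite since $r$ is rational, and cancelled singularities contributing zero residue).

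One small repair to your first step: the ``harmless perturbation'' of each $\rho_k$ is not actually available, because the kernel hypothesis $\xi(s)=o(s)$ is guaranteed only on the specific circles $|s|=\rho_k$, and after perturbing the radius you have no control on $\xi$ along the new circle. No perturbation is needed, though: a circle on which $\xi$ satisfies such a bound cannot pass through a pole of $\xi$ in the first place, and since $r$ is rational it has only finitely many poles, so at most finitely many of the circles $|s|=\rho_k$ meet a pole of $r$; discard those finitely many indices and run your argument along the remaining subsequence.
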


Recently, the authors of this paper \cite{Rui-Xu2025} introduced extended trigonometric functions $\Phi(s;x)$ and generalized digamma functions $\phi(s;x)$. By examining contour integrals of the kernel function $\xi(s):=\Phi(s;x)\phi^{(p_1-1)}(s;x_1)\cdots\phi^{(p_r-1)}(s;x_r)$ and rational function $r(s):=s^{-q}$, and through residue calculations, they established parity theorems and explicit formulas for cyclotomic Euler sums \eqref{defn-CyESs}. The \emph{generalized digamma function} $\phi(s;x)$ and \emph{extended trigonometric function} $\Phi(s;x)$ as follows:
\begin{align}
\phi(s;x):=\sum_{k=0}^\infty \frac{x^k}{k+s}\quad (s\notin\N_0^-:=\{0,-1,-2,-3,\ldots\}),
\end{align}
where $x$ is an arbitrary complex number with $|x|\leq 1$ and $x\neq 1$, and
\begin{align}
\Phi(s;x):=\phi(s;x)-\phi\Big(-s;x^{-1}\Big)-\frac1{s},
\end{align}
where $x$ is a root of unity. The function $\Phi(s;x)$ is referred to as an extended trigonometric function because the original $\pi \cot(\pi s)$ function admits an analogous representation in terms of the classical digamma function:
\begin{align*}
\pi \cot(\pi s)=-\frac1{s}+\psi(-s)-\psi(s).
\end{align*}
The function $\phi(s;x)$ is a special case of the \emph{classical Lerch zeta function}. In a recent study, Vicente and Holgado \cite{VH2025} have investigated a Lerch-type zeta function associated with recurrence sequences of arbitrary degree.
When computing residues for contour integrals, the crucial step involves determining the orders of all poles. In \cite{Rui-Xu2025}, the authors provided both Laurent series expansions and Maclaurin series expansions for the $\Phi(s;x)$ function and $\phi(s;x)$ function at integer points.
\begin{lem}(\cite[Eqs. (2.4) and (2.5)]{Rui-Xu2025})\label{lem-rui-xu-one} For $p\in \N$, if $|s+n|<1\ (n\geq 0)$, then
\begin{align}\label{Lexp-phi--n-diffp-1}
\frac{\phi^{(p-1)}(s;x)}{(p-1)!} (-1)^{p-1}&=x^n\sum_{k=0}^\infty \binom{k+p-1}{p-1} \left((-1)^k \Li_{k+p}(x)+(-1)^p\zeta_n\Big(k+p;x^{-1}\Big)\right)(s+n)^k \nonumber\\&\quad+\frac{x^n}{(s+n)^p}\qquad (|s+n|<1,\ n\geq 0)
\end{align}
and
\begin{align}\label{Lexp-phi-n-diffp-1}
\frac{\phi^{(p-1)}(s;x)}{(p-1)!} (-1)^{p-1}&=x^{-n}\sum_{k=0}^\infty \binom{k+p-1}{p-1} (-1)^k  \left( \Li_{k+p}(x)-\zeta_{n-1}\Big(k+p;x\Big)\right)(s-n)^k \nonumber\\&\qquad\qquad\qquad\qquad (|s-n|<1,\ n\geq 1).
\end{align}
\end{lem}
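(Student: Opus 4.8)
The plan is to reduce both identities to a single elementary computation: the binomial (Taylor) expansion of $1/(k+s)^p$ about the relevant integer. First I would put the derivative into closed form. Differentiating the defining series $\phi(s;x)=\sum_{k\ge0}x^k/(k+s)$ termwise $p-1$ times gives $\phi^{(p-1)}(s;x)=(-1)^{p-1}(p-1)!\sum_{k=0}^\infty x^k/(k+s)^p$, so that the left-hand side of both claims equals $F(s):=\sum_{k=0}^\infty x^k/(k+s)^p$. Termwise differentiation is legitimate because the differentiated series converges locally uniformly on compact subsets of $\CC\setminus\N_0^-$. Thus both \eqref{Lexp-phi--n-diffp-1} and \eqref{Lexp-phi-n-diffp-1} become assertions about the Laurent/Taylor expansion of the single series $F(s)$.

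For \eqref{Lexp-phi--n-diffp-1} I would set $s=-n+\eps$ with $\eps=s+n$, $|\eps|<1$, and split $F(s)$ according to $k>n$, $k=n$, and $k<n$. The term $k=n$ contributes exactly the singular part $x^n/\eps^p=x^n/(s+n)^p$. For $k>n$, writing $j=k-n\ge1$ and expanding $(j+\eps)^{-p}=\sum_{m\ge0}\binom{-p}{m}\eps^m/j^{p+m}$ with $\binom{-p}{m}=(-1)^m\binom{m+p-1}{p-1}$, then resumming over $j$ via $\sum_{j\ge1}x^j/j^{p+m}=\Li_{m+p}(x)$, produces the coefficients $(-1)^m\binom{m+p-1}{p-1}\Li_{m+p}(x)$. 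For $k<n$, writing $i=n-k$ with $1\le i\le n$, the denominator becomes $(-(i-\eps))^p$, so a factor $(-1)^p$ is pulled out and $(i-\eps)^{-p}=\sum_{m\ge0}\binom{m+p-1}{p-1}\eps^m/i^{p+m}$; resumming over $i$ and using $\sum_{i=1}^n x^{-i}/i^{p+m}=\zeta_n(m+p;x^{-1})$ gives the remaining coefficients. Collecting the three pieces yields \eqref{Lexp-phi--n-diffp-1}.

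The expansion \eqref{Lexp-phi-n-diffp-1} is simpler, since $s=n$ with $n\ge1$ is a regular point of $\phi$ and there is no singular term. Setting $s=n+\eps$, $\eps=s-n$, $|\eps|<1$, and reindexing $\ell=k+n\ge n$ gives $F(s)=x^{-n}\sum_{\ell\ge n}x^\ell/(\ell+\eps)^p$. Expanding each summand as $(\ell+\eps)^{-p}=\sum_{m\ge0}(-1)^m\binom{m+p-1}{p-1}\eps^m/\ell^{p+m}$ and resumming over $\ell$ leaves the tail $\sum_{\ell\ge n}x^\ell/\ell^{p+m}=\Li_{m+p}(x)-\zeta_{n-1}(m+p;x)$, which is precisely the bracketed factor in \eqref{Lexp-phi-n-diffp-1}.

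Since the algebra is otherwise routine, the only point demanding care — and the main obstacle — is justifying the interchange of the two summations: the sum over $k$ (equivalently over $j$, $i$, or $\ell$) and the infinite binomial series in $\eps$. For $p\ge2$ this is immediate: the inner sums are bounded by $\zeta(p+m)\le\zeta(p)$, so the relevant double series is dominated by $\zeta(p)\sum_{m\ge0}\binom{m+p-1}{p-1}|\eps|^m=\zeta(p)/(1-|\eps|)^p<\infty$ whenever $|\eps|<1$, and Fubini lets me exchange orders and collect the coefficient of $\eps^m$. This also explains why the hypotheses $|s+n|<1$ and $|s-n|<1$ are exactly the natural radii of convergence, the binding index being $j=1$, $i=1$, or $\ell=n\ge1$. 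The borderline case $p=1$ requires a small separate argument, because the $m=0$ inner sum is only conditionally convergent; here the constant ($m=0$) coefficient is read off directly as $\Li_1(x)$ (respectively $\zeta_n(1;x^{-1})$, $\Li_1(x)-\zeta_{n-1}(1;x)$), which is finite since $x$ is a root of unity with $x\neq1$, while all terms with $m\ge1$ are covered by the absolute-convergence argument above.
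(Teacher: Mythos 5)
Your proof is correct, and it is worth noting at the outset that the present paper does not actually prove this lemma: it is imported from \cite[Eqs.\ (2.4) and (2.5)]{Rui-Xu2025}, and the only proofs of this kind given here are those of the analogous Lemmas \ref{lem-extend-xuzhou-one}--\ref{lem-extend-xuzhou-three}. There the author works in the transposed order to yours: he first records the $p=1$ expansion of $\phi$ at the relevant point (asserted to follow ``directly from the definition'', i.e.\ by exactly the split-and-resum computation you perform), and then differentiates that power series $(p-1)$ times term by term, which is automatic inside the disc of convergence and is what produces the coefficients $\binom{k+p-1}{p-1}$. You instead differentiate the defining series first, reducing everything to $\sum_{k\ge 0}x^k/(k+s)^p$, and then expand each term by the negative binomial series, so the same coefficients enter via $\binom{-p}{m}=(-1)^m\binom{m+p-1}{p-1}$; the decomposition into $k>n$, $k=n$, $k<n$ and the resummations are identical in substance. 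What your route buys is the rigor that both the paper and the cited source leave implicit: the locally uniform convergence legitimizing termwise differentiation, the Fubini argument for swapping the index sum with the sum over powers of $s\pm n$, and the separate treatment of $p=1$, where the $m=0$ inner sums are only conditionally convergent; your fix (read off the $m=0$ coefficient directly, which converges since $x\neq 1$ is a root of unity, and apply joint absolute convergence for $m\ge 1$) is sound. One small imprecision: for the expansion at $s=n$ the nearest singularity of $\phi$ is $s=0$, so the natural radius of convergence is $n$ rather than $1$; the hypothesis $|s-n|<1$ is merely a sufficient restriction, so your closing remark that the stated hypotheses are ``exactly the natural radii'' is accurate only for the expansion at $s=-n$ (and for $n=1$ in the other case). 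This does not affect the validity of the argument.
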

\begin{lem}(\cite[Eq. (2.6)]{Rui-Xu2025})\label{lem-rui-xu-two} For $n\in \Z$,
\begin{align}\label{LEPhi-function}
\Phi(s;x)=x^{-n} \left(\frac1{s-n}+\sum_{m=0}^\infty \Big((-1)^m\Li_{m+1}(x)-\Li_{m+1}\Big(x^{-1}\Big)\Big)(s-n)^m \right).
\end{align}
\end{lem}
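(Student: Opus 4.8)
The plan is to recast $\Phi(s;x)$ as a manifestly symmetric series and then exploit a functional equation that reduces the claim for a general integer $n$ to the single base case $n=0$. Merging the $k=0$ terms of $\phi(s;x)$ and $\phi(-s;x^{-1})$ against the subtracted $1/s$ in the definition, I would first record the representation
\begin{align*}
\Phi(s;x)=\frac1{s}+\sum_{k=1}^\infty\left(\frac{x^k}{k+s}-\frac{x^{-k}}{k-s}\right),
\end{align*}
valid for all $s\notin\Z$. From this form it is immediate that $\Phi(\cdot;x)$ is meromorphic with only simple poles at the integers and that the residue at $s=n$ equals $x^{-n}$ for every $n\in\Z$ (the pole coming from the $k=0$ term when $n=0$, from the term $-x^{-n}/(n-s)$ when $n>0$, and from the term $x^{-n}/(-n+s)$ when $n<0$). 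This already reproduces the singular part $x^{-n}/(s-n)$ predicted by \eqref{LEPhi-function}.

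The key structural step is the quasi-periodicity relation $\Phi(s+1;x)=x^{-1}\Phi(s;x)$. I would derive it from the elementary recurrence $\phi(s+1;x)=x^{-1}\phi(s;x)-x^{-1}/s$, obtained by shifting the summation index in $\phi(s;x)=\sum_{k\ge0}x^k/(k+s)$, together with its companion $\phi(-s-1;x^{-1})=x^{-1}\phi(-s;x^{-1})-1/(s+1)$; substituting both into $\Phi(s+1;x)=\phi(s+1;x)-\phi(-s-1;x^{-1})-1/(s+1)$ makes the stray $1/(s+1)$ terms cancel and leaves exactly $x^{-1}\Phi(s;x)$. Granting the expansion at $n=0$, one propagates it to every integer by induction in both directions: replacing $s$ by $s+1$ (respectively $s-1$) in the expansion at $n$ and multiplying by $x^{-1}$ (respectively $x$) yields precisely the asserted expansion at $n+1$ (respectively $n-1$), with the same coefficients $c_m=(-1)^m\Li_{m+1}(x)-\Li_{m+1}(x^{-1})$. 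The $n$-independence of these coefficients is thus forced by the functional equation.

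It remains to prove the base case
\begin{align*}
\Phi(s;x)=\frac1{s}+\sum_{m=0}^\infty\left((-1)^m\Li_{m+1}(x)-\Li_{m+1}(x^{-1})\right)s^m,\qquad |s|<1.
\end{align*}
For $|s|<1$ I would expand each summand geometrically, $x^k/(k+s)=\sum_{m\ge0}(-1)^m x^k s^m/k^{m+1}$ and $x^{-k}/(k-s)=\sum_{m\ge0}x^{-k}s^m/k^{m+1}$, then sum over $k$, recognizing $\sum_{k\ge1}x^k/k^{m+1}=\Li_{m+1}(x)$ and $\sum_{k\ge1}x^{-k}/k^{m+1}=\Li_{m+1}(x^{-1})$; since $x$ is a root of unity different from $1$, both series converge, including the conditionally convergent $m=0$ values $\Li_1(x),\Li_1(x^{-1})$. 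The one point demanding care — and the main obstacle — is that the outer $k$-series converges only conditionally, so the interchange of the $k$- and $m$-summations is not justified by absolute convergence. I would dispose of this by the splitting
\begin{align*}
\frac{x^k}{k+s}-\frac{x^{-k}}{k-s}=\frac{x^k-x^{-k}}{k}-\frac{s\,x^k}{k(k+s)}-\frac{s\,x^{-k}}{k(k-s)},
\end{align*}
in which the first piece is $s$-independent and convergent (summing to $\Li_1(x)-\Li_1(x^{-1})$), while the two remaining pieces are $O(k^{-2})$ uniformly on compact subsets of $|s|<1$ and hence sum to functions holomorphic there. The Weierstrass convergence theorem for locally uniformly convergent series of holomorphic functions then legitimizes term-by-term differentiation, so the Taylor coefficients of $\Phi(s;x)-1/s$ at $s=0$ are the sums over $k$ of the individual coefficients, which are exactly the $c_m$ above.

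Finally, I note that the base case can alternatively be read off directly from Lemma \ref{lem-rui-xu-one}: taking $p=1$ and $n=0$ in \eqref{Lexp-phi--n-diffp-1} gives $\phi(s;x)=1/s+\sum_{k\ge0}(-1)^k\Li_{k+1}(x)s^k$ (the finite sums $\zeta_0$ being empty), and substituting $s\mapsto-s$, $x\mapsto x^{-1}$ yields the expansion of $\phi(-s;x^{-1})$; inserting these into $\Phi=\phi(s;x)-\phi(-s;x^{-1})-1/s$ reproduces the claimed series while packaging the analytic subtleties inside Lemma \ref{lem-rui-xu-one}. In fact one can bypass the functional equation entirely by combining \eqref{Lexp-phi--n-diffp-1} and \eqref{Lexp-phi-n-diffp-1} at a general $n$, whereupon the finite-sum differences $\zeta_n(k+1;x)-\zeta_{n-1}(k+1;x)=x^n/n^{k+1}$ telescope against the Taylor expansion of $-1/s$ about $s=n$, leaving only the $\Li$-coefficients.
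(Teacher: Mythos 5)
Your proof is correct, but it is worth noting that this paper does not actually prove the lemma at all: it is imported verbatim from \cite[Eq.~(2.6)]{Rui-Xu2025}, and the sibling expansions that the paper does prove (Lemmas \ref{lem-extend-xuzhou-one}--\ref{lem-extend-xuzhou-three}) are all handled by the direct route you only sketch at the end --- expand the defining series of $\phi$ about the point in question and read off coefficients, with the convergence issues dismissed as ``elementary calculation''. Your primary argument is genuinely different: you establish the expansion at $n=0$ rigorously and then propagate it to every integer via the quasi-periodicity $\Phi(s+1;x)=x^{-1}\Phi(s;x)$, which you derive correctly from the shift recurrences for $\phi$. This buys two things the paper-style computation does not: a structural explanation of why the coefficients $(-1)^m\Li_{m+1}(x)-\Li_{m+1}\big(x^{-1}\big)$ are independent of $n$ and why the prefactor is exactly $x^{-n}$, and an honest treatment of the one real analytic subtlety, namely that the $k$-sums defining the $m=0$ coefficient converge only conditionally, so the interchange of the $k$- and $m$-summations needs justification; your splitting $\frac{x^k}{k+s}-\frac{x^{-k}}{k-s}=\frac{x^k-x^{-k}}{k}-\frac{s\,x^k}{k(k+s)}-\frac{s\,x^{-k}}{k(k-s)}$ plus Weierstrass's theorem handles this cleanly. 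I checked the functional equation, the two-sided induction, the base case (including the constant term $\Li_1(x)-\Li_1\big(x^{-1}\big)$), and the telescoping identity $\zeta_n(k+1;x)-\zeta_{n-1}(k+1;x)=x^n/n^{k+1}$ in your alternative derivation from Lemma \ref{lem-rui-xu-one}; all are sound, so either of your two routes constitutes a complete proof.
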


In this paper, we primarily investigate the three classes of Hurwitz-type cyclotomic Euler sums in \eqref{defnS-ES1}, \eqref{defnS-ES2} and \eqref{defnS-ES3} by examining the following three types of contour integrals:
\begin{align*}
&\oint\limits_{\left( \infty  \right)} \frac{\Phi(s;x)\phi^{(p_1-1)}(s+a;x_1)\cdots\phi^{(p_r-1)}(s+a;x_r)}{(p_1-1)!\cdots (p_r-1)!(s+a)^q}(-1)^{p_1+\cdots+p_r-r} ds=0,\\
&\oint\limits_{\left( \infty  \right)} \frac{\Phi(s;x)\phi^{(p_1-1)}(s+a;x_1)\cdots\phi^{(p_r-1)}(s+a;x_r)}{(p_1-1)!\cdots (p_r-1)!s^q}(-1)^{p_1+\cdots+p_r-r} ds=0,\\
&\oint\limits_{\left( \infty  \right)} \frac{\Phi(s;x)\phi^{(p_1-1)}(s;x_1)\cdots\phi^{(p_r-1)}(s;x_r)}{(p_1-1)!\cdots (p_r-1)!(s+a)^q}(-1)^{p_1+\cdots+p_r-r} ds=0.
\end{align*}
Therefore, to facilitate the computation of residues for the aforementioned contour integrals, we need to derive the Maclaurin series expansions of $\phi^{(p-1)}(s+a;x)$ at integer points and $\Phi(s;x)$ at $(-n - a)$ (where $n\in \N_0:=\N\cup \{0\})$.
\begin{lem}\label{lem-extend-xuzhou-one} For $p\in \N$, if $|s+n|<1\ (n\geq 0)$, then
\begin{align}\label{Lexp-phi--n-diffp-1}
&\frac{\phi^{(p-1)}(s+a;x)}{(p-1)!} (-1)^{p-1}\nonumber\\
&=x^n\sum_{k=0}^\infty \binom{k+p-1}{p-1} \left((-1)^k \Li_{k+p}(x;a)x^{-1}+(-1)^p \zeta_n\Big(k+p;x^{-1};-a\Big)\right)(s+n)^k
\end{align}
and if $|s-n|<1\ (n\geq 1)$
\begin{align}\label{Lexp-phi-n-diffp-1}
&\frac{\phi^{(p-1)}(s+a;x)}{(p-1)!} (-1)^{p-1}\nonumber\\
&=x^{-n-1}\sum_{k=0}^\infty \binom{k+p-1}{p-1} (-1)^k  \left( \Li_{k+p}(x;a)-\zeta_{n}\Big(k+p;x;a-1\Big)\right)(s-n)^k.
\end{align}
\end{lem}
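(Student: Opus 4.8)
The plan is to reduce both expansions to a single computation of Taylor coefficients, using the fact that, because $a\in\CC\setminus\Z$, the function $g(s):=(-1)^{p-1}\phi^{(p-1)}(s+a;x)/(p-1)!$ is analytic at every integer point $s=-n$ and $s=n$. Differentiating the defining series $\phi(u;x)=\sum_{k'\geq 0}x^{k'}/(k'+u)$ term by term gives $g(s)=\sum_{k'=0}^{\infty}x^{k'}/(k'+s+a)^{p}$, and then, for every $k\geq 0$,
\[
g^{(k)}(s)=(-1)^k\,k!\binom{p+k-1}{p-1}\sum_{k'=0}^\infty \frac{x^{k'}}{(k'+s+a)^{p+k}}.
\]
Since the $k$-th Taylor coefficient of $g$ at an integer point is $g^{(k)}/k!$ evaluated there, both claimed expansions reduce to evaluating the single Dirichlet-type series $\sum_{k'\geq 0}x^{k'}(k'+s+a)^{-(p+k)}$ at $s=-n$ and at $s=n$, and then multiplying by $(-1)^k\binom{p+k-1}{p-1}$.

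For the expansion at $s=-n$ (with $n\geq 0$) I would split $\sum_{k'\geq 0}x^{k'}/(k'-n+a)^{p+k}$ at the crossover index $k'=n$. The tail $k'\geq n$ becomes, after setting $j=k'-n$, the sum $x^{n}\sum_{j\geq 0}x^{j}/(j+a)^{p+k}=x^{n}\,x^{-1}\Li_{p+k}(x;a)$, using the elementary identity $\sum_{j\geq 0}x^{j}/(j+a)^{m}=x^{-1}\Li_{m}(x;a)$ that follows at once from the definition of the Hurwitz polylogarithm. The finite head $0\leq k'\leq n-1$, after setting $l=n-k'$, becomes $(-1)^{p+k}x^{n}\sum_{l=1}^{n}x^{-l}/(l-a)^{p+k}=(-1)^{p+k}x^{n}\,\zeta_n(p+k;x^{-1};-a)$, where the sign $(-1)^{p+k}$ arises from $(a-l)^{p+k}=(-1)^{p+k}(l-a)^{p+k}$. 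Collecting the two pieces and using $(-1)^{k}(-1)^{p+k}=(-1)^{p}$ reproduces exactly the first claimed coefficient.

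The expansion at $s=n$ (with $n\geq 1$) is analogous but simpler, since the sum has no negative-index part: it is the single tail $\sum_{k'\geq 0}x^{k'}/(k'+n+a)^{p+k}$. Re-indexing by $l=k'+n+1$ rewrites it as $x^{-n-1}\sum_{l\geq n+1}x^{l}/(l+a-1)^{p+k}$, and separating the first $n$ terms of the Hurwitz polylogarithm from its tail identifies this with $x^{-n-1}\bigl(\Li_{p+k}(x;a)-\zeta_n(p+k;x;a-1)\bigr)$; multiplying by $(-1)^{k}\binom{p+k-1}{p-1}$ gives the second claimed coefficient.

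The algebra is routine, so the real work is the analytic justification, which I expect to be the main obstacle. First I would verify that term-by-term differentiation is legitimate: for $p\geq 2$, and for every derivative order $k\geq 1$ when $p=1$, the relevant series converges absolutely and locally uniformly away from the poles $s=-a-k'$, so Weierstrass' theorem applies and the coefficients are as computed; the only delicate case is the value $g(-n)$ itself when $p=1$, which is the conditionally convergent $\phi(-n+a;x)$ and is controlled by the Dirichlet test since $|x|\leq 1$ and $x\neq 1$. I would also be careful about the precise region of validity: the nearest singularity of $g$ to the integer point lies at distance $\min_{k'\geq 0}|a+k'-n|$, so the stated bound $|s+n|<1$ should be read as referring to the disc of analyticity about the integer, the coefficient identities themselves holding by uniqueness of Taylor expansions regardless of the exact radius; this is all that the later residue computations require.
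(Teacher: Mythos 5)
Your proposal is correct and follows essentially the same route as the paper: the paper's proof writes down the Taylor expansion of $\phi(s+a;x)$ at $s=-n$ and $s=n$ (obtained by the same splitting of the defining series at the crossover index $k'=n$ and re-indexing into a Hurwitz polylogarithm tail plus a finite head) and then differentiates the resulting power series $(p-1)$ times, while you merely commute these two commuting operations, differentiating the defining series first and then reading off the Taylor coefficients from $g^{(k)}/k!$, arriving at identical coefficients. Your additional care about term-by-term differentiation in the conditionally convergent case $p=1$ and about the true region of validity (the disc of analyticity around the integer, which can be smaller than $|s+n|<1$ depending on $a$) addresses analytic points that the paper's two-line proof leaves implicit, but does not change the substance of the argument.
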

\begin{proof}
If $|s+n|<1\ (n\geq 0)$, it follows directly from the definition that
\begin{align*}
\phi(s+a;x)=x^n \sum_{m=0}^\infty \left((-1)^m \Li_{m+1}(x;a)x^{-1}-\zeta_n\Big(m+1;x^{-1};-a\Big) \right)(s+n)^m.
\end{align*}
Taking the $(p-1)$th derivative with respect to $s$ on both sides of the above equation yields formula \eqref{Lexp-phi--n-diffp-1}. Similarly, if $|s-n|<1\ (n\geq 1)$, by a direct calculation, we obtain
\begin{align*}
\phi(s+a;x)=x^{-n-1} \sum_{m=0}^\infty (-1)^m \left( \Li_{m+1}(x;a)-\zeta_n\Big(m+1;x;a-1\Big) \right)(s-n)^m.
\end{align*}
Taking the $(p-1)$th derivative with respect to $s$ on both sides of the above equation yields formula \eqref{Lexp-phi-n-diffp-1}.
\end{proof}

\begin{lem}\label{lem-extend-xuzhou-two} If $|s+n+a|<1\ (n\geq 0)$, then
\begin{align}
\Phi(s;x)=x^n \sum_{m=0}^\infty \left((-1)^m \Li_{m+1}(x;1-a)-x \Li_{m+1}\Big(x^{-1};a\Big) \right)(s+n+a)^m.
\end{align}
\end{lem}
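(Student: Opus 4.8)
The plan is to expand $\Phi(s;x)$ directly from its definition $\Phi(s;x)=\phi(s;x)-\phi(-s;x^{-1})-1/s$ as a Taylor series about the point $s=-n-a$, which is legitimate precisely because $a\notin\Z$ forces $-n-a$ to avoid the integer poles of $\Phi$, so $\Phi(\cdot;x)$ is analytic there. Writing $u:=s+n+a$, I would first record the coefficient of $u^m$ in each of the three pieces by term-by-term differentiation of the defining series: for $m\ge 1$ the series $\phi^{(m)}(s;x)=(-1)^m m!\sum_{k\ge 0}x^k/(k+s)^{m+1}$ converges absolutely and uniformly on compact subsets avoiding $\N_0^-$, so the Taylor coefficient of $\phi(s;x)$ is $(-1)^m\sum_{k\ge 0}x^k/(k-n-a)^{m+1}$; likewise the coefficient of $\phi(-s;x^{-1})$ is $\sum_{k\ge 0}x^{-k}/(k+n+a)^{m+1}$, and that of $1/s$ is $(-1)^m/(-n-a)^{m+1}=-1/(n+a)^{m+1}$.

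Second, I would reindex each coefficient so as to factor out $x^n$: with $j=k-n$ the $\phi(s;x)$ coefficient becomes $(-1)^m x^n\sum_{j\ge -n}x^j/(j-a)^{m+1}$, and with $l=k+n$ the $\phi(-s;x^{-1})$ coefficient becomes $x^n\sum_{l\ge n}x^{-l}/(l+a)^{m+1}$. Splitting the first sum at $j=0$ identifies the tail $\sum_{j\ge 1}x^j/(j-a)^{m+1}=\Li_{m+1}(x;1-a)$, while the head $\sum_{-n\le j\le 0}$, after the substitution $j\mapsto -l$, equals $(-1)^{m+1}\sum_{0\le l\le n}x^{-l}/(l+a)^{m+1}$.

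Third, the head of the $\phi(s;x)$ coefficient and the $\phi(-s;x^{-1})$ coefficient combine, both entering with coefficient $-x^n$ after the signs are accounted for; here $\sum_{0\le l\le n}+\sum_{l\ge n}=\sum_{l\ge 0}$ plus one extra copy of the $l=n$ term $x^{-n}/(n+a)^{m+1}$. This stray diagonal term is exactly cancelled by the $-1/s$ contribution $+1/(n+a)^{m+1}$, leaving the clean coefficient $x^n\big[(-1)^m\Li_{m+1}(x;1-a)-\sum_{l\ge 0}x^{-l}/(l+a)^{m+1}\big]$; reindexing $l\mapsto l-1$ turns the second sum into $x\,\Li_{m+1}(x^{-1};a)$, which is the asserted formula.

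The main obstacle is not the algebra but its justification: since $x$ is a root of unity with $x\ne 1$, the series defining $\phi$ converges only conditionally, so one cannot expand $1/(k+s)$ as a geometric series in $u$ and apply Fubini directly, because the resulting double series fails to be absolutely summable (the majorant behaves like $\sum_j 1/|j|$). Differentiating the convergent series term by term, valid for $m\ge 1$ on compact sets bounded away from $\N_0^-$, sidesteps this entirely, reducing the whole argument to evaluating $\Phi^{(m)}(-n-a;x)/m!$ and reassembling. The only remaining care is the bookkeeping of the overlapping $l=n$ index, whose cancellation against the subtracted $1/s$ is exactly the structural reason the definition of $\Phi$ includes that term.
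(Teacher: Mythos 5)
Your proof is correct and is exactly the elementary direct calculation that the paper leaves to the reader: expand each of the three pieces of $\Phi(s;x)=\phi(s;x)-\phi\big(-s;x^{-1}\big)-\tfrac1s$ about $s=-n-a$, reindex each sum to factor out $x^n$, identify $\sum_{j\ge 1}x^j/(j-a)^{m+1}=\Li_{m+1}(x;1-a)$ and $\sum_{l\ge 0}x^{-l}/(l+a)^{m+1}=x\Li_{m+1}\big(x^{-1};a\big)$, and observe that the doubled $l=n$ term cancels against the $-\tfrac1s$ contribution. Your sign bookkeeping checks out throughout, and your use of termwise differentiation (justified by locally uniform convergence of the conditionally convergent defining series) rather than a geometric-series-plus-Fubini swap is a genuine refinement in rigor over the paper's one-line remark, not a different approach.
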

\begin{proof}
The proof of this lemma is also based on an elementary calculation, which we leave to interested readers to attempt.
\end{proof}

Finally, we also need to provide the power series expansion of the $\phi^{(p-1)}(s;x)$ function for an arbitrary complex number $-a$ and $a$ is not a natural number.
\begin{lem}\label{lem-extend-xuzhou-three} If $|s+a|<1\ (n\geq 0)$, then
\begin{align}
\frac{\phi^{(p-1)}(s;x)}{(p-1)!}(-1)^{p-1}=\sum_{k=0}^\infty (-1)^k\binom{k+p-1}{p-1}\Li_{k+p}(x;-a)x^{-1}(s+a)^k.
\end{align}
\end{lem}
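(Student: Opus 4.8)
The plan is to follow exactly the two-step pattern already used for Lemma~\ref{lem-extend-xuzhou-one}: first establish the identity for $p=1$ by expanding $\phi(s;x)$ as a Taylor series about the point $s=-a$, and then obtain the general case by differentiating $(p-1)$ times and relabelling the summation index. Since $a$ is not a natural number, $-a\notin\N_0^-$, so $s=-a$ is not one of the poles $\{0,-1,-2,\dots\}$ of $\phi(s;x)$ and a genuine power-series expansion about $s=-a$ exists, valid in the disk up to the nearest pole.

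For the base case I would start from the defining series $\phi(s;x)=\sum_{k=0}^\infty x^k/(k+s)$ and write each denominator as $k+s=(k-a)+(s+a)$. Expanding $1/\big((k-a)+(s+a)\big)$ as a geometric series in $(s+a)/(k-a)$ and interchanging the two summations gives
\[
\phi(s;x)=\sum_{m=0}^\infty (-1)^m (s+a)^m \sum_{k=0}^\infty \frac{x^k}{(k-a)^{m+1}}.
\]
The inner sum is then identified with a shifted Hurwitz polylogarithm: shifting $k\mapsto n=k+1$ and using $\Li_{m+1}(x;-a)=\sum_{n\geq 1} x^n/(n-a-1)^{m+1}$ yields $\sum_{k\geq 0}x^k/(k-a)^{m+1}=x^{-1}\Li_{m+1}(x;-a)$. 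This produces the $p=1$ identity
\[
\phi(s;x)=x^{-1}\sum_{m=0}^\infty(-1)^m\Li_{m+1}(x;-a)(s+a)^m,
\]
which is exactly the asserted formula when $p=1$ (where $\binom{k}{0}=1$).

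To pass to general $p$, I would differentiate this power series $(p-1)$ times term by term with respect to $s$, which is legitimate inside the disk of convergence. The $m$-th term contributes only for $m\geq p-1$, and its $(p-1)$-th derivative carries the factor $m!/(m-p+1)!$; setting $k=m-p+1$ and rewriting $(k+p-1)!/k!=(p-1)!\binom{k+p-1}{p-1}$ converts this into the stated binomial coefficient. Dividing by $(p-1)!$ and multiplying by $(-1)^{p-1}$, the sign collapses via $(-1)^{k+p-1}(-1)^{p-1}=(-1)^{k}$, giving precisely the claimed expansion.

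The only points needing a little care, and the places I would expect to be the main (if modest) obstacle, are the justification of the interchange of the double summation and the index bookkeeping identifying the inner series with $x^{-1}\Li_{m+1}(x;-a)$. The interchange follows from absolute convergence, since $|x|\leq 1$ and the denominators $(k-a)^{-(m+1)}$ decay once $|s+a|$ is smaller than the distance from $a$ to $\N_0$; both steps are entirely routine, so I would present the argument compactly, in the same spirit as the preceding lemmas.
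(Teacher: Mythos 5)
Your proposal follows exactly the paper's route: the paper's own proof consists precisely of your two steps, namely the $p=1$ expansion of $\phi(s;x)$ about $s=-a$ (which the paper compresses into ``by an elementary calculation, we deduce'') followed by $(p-1)$-fold termwise differentiation, and your index shift $k\mapsto n=k+1$, the identification $\sum_{k\ge 0}x^k(k-a)^{-(m+1)}=x^{-1}\Li_{m+1}(x;-a)$, and the sign and binomial bookkeeping are all correct (your remark that the true radius of validity is the distance from $a$ to $\N_0$ is in fact more careful than the lemma's own statement).

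The one step that is not right as you justified it is the interchange of the two summations. For $|x|=1$ --- which is the relevant case, since $x$ is a root of unity --- the double series is \emph{not} absolutely convergent: summing the moduli over $m$ first gives
\begin{align*}
\sum_{k\ge 0}\sum_{m\ge 0}\frac{|x|^k\,|s+a|^m}{|k-a|^{m+1}}
=\sum_{k\ge 0}\frac{1}{|k-a|-|s+a|}=\infty,
\end{align*}
the divergence coming entirely from the $m=0$ layer $\sum_{k}|x|^k/|k-a|$. So ``the interchange follows from absolute convergence'' fails exactly where the lemma is used. The standard patch is minor: split off the $m=0$ term, which is a single conditionally convergent series (Dirichlet's test) requiring no interchange, and apply Fubini only to the $m\ge 1$ part, which is genuinely absolutely summable because
\begin{align*}
\sum_{k\ge 0}\sum_{m\ge 1}\frac{|s+a|^m}{|k-a|^{m+1}}
=\sum_{k\ge 0}\frac{|s+a|}{|k-a|\left(|k-a|-|s+a|\right)}<\infty .
\end{align*}
Alternatively, avoid the interchange altogether: $\phi(\,\cdot\,;x)$ is analytic near $s=-a$, so its Taylor coefficients are $\phi^{(m)}(-a;x)/m!$, and these can be computed by termwise differentiation of the defining series (the differentiated series converge absolutely for $m\ge 1$, and locally uniformly for $m=0$), which yields the same coefficients $(-1)^m x^{-1}\Li_{m+1}(x;-a)$ directly. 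With either repair your argument is complete and coincides with the paper's.
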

\begin{proof}
By an elementary calculation, we deduce
\begin{align*}
\phi(s;x)=\sum_{k=0}^\infty (-1)^k\Li_{k+1}(x;-a)x^{-1}(s+a)^k.
\end{align*}
Taking the $(p-1)$th derivative with respect to s on both sides of the above equation completes the proof of this lemma.
\end{proof}

\section{Parity Results}
In this section, we will employ the method of contour integration to derive explicit formulas for the parity relations of the linear and quadratic cases of the three types of Hurwitz-type cyclotomic Euler sums corresponding to equations \eqref{defnS-ES1}, \eqref{defnS-ES2} and \eqref{defnS-ES3}. Furthermore, we will present three theorems stating the parity relations for these three types of Hurwitz-type cyclotomic Euler sums of arbitrary order. Additionally, several illustrative examples are provided.

\subsection{Hurwitz-type Cyclotomic Linear Euler Sums}

First, in this subsection, we employ contour integration and residue computation to present the results and relevant examples for the linear case of these three types of Hurwitz-type cyclotomic Euler sums.

\begin{thm}\label{thm-linearCES} Let $x,y$ be roots of unity, $a\in \mathbb{C}\setminus \N$ and $p,q\geq 1$ with $(p,y), (q,xy)\neq (1,1)$. We have
\begin{align}
&x S_{p;q}^{(a-1)}\Big(y;(xy)^{-1}\Big)-(-1)^{p+q} S_{p;q}^{(-a)}\Big(y^{-1};xy\Big)\nonumber\\
&=x \Li_{p}(y;a)\Li_q\Big((xy)^{-1};a\Big)+(-1)^{q} y^{-1} \Li_p(y;a)\Li_q(xy;1-a)+(-1)^{p+q-1}\Li_{p+q}(x;1-a)\nonumber\\
&\quad+(-1)^q \sum_{m=0}^{p-1}\binom{p+q-m-2}{q-1}\left((-1)^m \Li_{m+1}(x;1-a)-x\Li_{m+1}\Big(x^{-1};a\Big)\right)\Li_{p+q-m-1}(xy)\nonumber\\
&\quad+(-1)^q \sum_{m=0}^{q-1}\binom{p+q-m-2}{p-1}\left((-1)^m x\Li_{m+1}\Big(x^{-1};a\Big)-\Li_{m+1}(x;1-a)\right)\Li_{p+q-m-1}(y).
\end{align}
\end{thm}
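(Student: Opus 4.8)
The plan is to apply Cauchy's residue theorem (Lemma~\ref{lem-redisue-thm}) to the kernel introduced in Section~2, i.e.\ to evaluate
\[
\oint\limits_{(\infty)}\frac{(-1)^{p-1}\,\Phi(s;x)\,\phi^{(p-1)}(s+a;y)}{(p-1)!\,(s+a)^q}\,ds .
\]
Since $\Phi(s;x)$ is the analogue of $\pi\cot(\pi s)$ and stays bounded on a suitable family of expanding circles $|s|=\rho_k$ that avoid the poles, while $\phi^{(p-1)}(s+a;y)=O(s^{-p})$ and $(s+a)^{-q}=O(s^{-q})$, the integrand is $O(s^{-p-q})=O(s^{-2})$; hence the contour integral vanishes and the sum of all residues is $0$. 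The hypotheses $(p,y)\neq(1,1)$ and $(q,xy)\neq(1,1)$ are precisely what guarantees convergence of the polylogarithms and of the Hurwitz-type $S$-sums that will appear.

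First I would classify the poles. Because $a\notin\Z$, the three sources of singularities do not interfere except at one point: $\Phi(s;x)$ gives a simple pole at every integer $s=n$ with residue $x^{-n}$ (Lemma~\ref{lem-rui-xu-two}); $\phi^{(p-1)}(s+a;y)$ gives poles of order $p$ at $s=-m-a$ for each $m\ge1$; and at $s=-a$ the order-$p$ pole of $\phi^{(p-1)}(s+a;y)$ merges with the order-$q$ pole of $(s+a)^{-q}$ into one pole of order $p+q$. The residue identity therefore reads
\[
\Big(\sum_{n\ge 1}+\sum_{n\le 0}\Big)\Res_{s=n}\;+\;\Res_{s=-a}\;+\;\sum_{m\ge1}\Res_{s=-m-a}\;=\;0 .
\]

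Next I would evaluate the four groups of residues using the expansions of Lemmas~\ref{lem-extend-xuzhou-one}--\ref{lem-extend-xuzhou-three}. At a positive integer $s=n$ the factor $(-1)^{p-1}\phi^{(p-1)}(n+a;y)/(p-1)!$ equals $y^{-n}\big(\Li_p(y;a+1)-\zeta_{n-1}(p;y;a)\big)$ (the $k=0$ coefficient of the expansion for $|s-n|<1$ in Lemma~\ref{lem-extend-xuzhou-one}); multiplying by $x^{-n}$, summing over $n\ge1$, and using $\zeta_{n-1}=\zeta_n-y^n/(n+a)^p$ produces a $\Li_p\cdot\Li_q$ product, a single $\Li_{p+q}$ term, and one of the two $S$-sums. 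At a non-positive integer $s=-m$ ($m\ge0$) the expansion for $|s+n|<1$ in Lemma~\ref{lem-extend-xuzhou-one} yields the complementary sum $S^{(-a)}_{p;q}(y^{-1};xy)$, the prefactor $(-1)^{p+q}$ arising from the two sign flips $(a-m)^q=(-1)^q(m-a)^q$ and the $(-1)^p$ in the expansion, together with $(-1)^q y^{-1}\Li_p(y;a)\Li_q(xy;1-a)$ and a boundary constant. At $s=-a$ I would take the Cauchy product of the Taylor series of $\Phi(s;x)$ from Lemma~\ref{lem-extend-xuzhou-two} with $(-1)^{p-1}\phi^{(p-1)}(s+a;y)/(p-1)!=(s+a)^{-p}+\sum_{j\ge0}\binom{j+p-1}{p-1}(-1)^j\Li_{j+p}(y)(s+a)^j$ and with $(s+a)^{-q}$, and read off the coefficient of $(s+a)^{-1}$; this gives the standalone $(-1)^{p+q-1}\Li_{p+q}(x;1-a)$ and the second binomial sum (the one carrying $\binom{p+q-m-2}{p-1}$ and $\Li_{p+q-m-1}(y)$).

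The step I expect to be the main obstacle is the infinite family $s=-m-a$, $m\ge1$. Here only the principal part $y^m(s+a+m)^{-p}$ of $\phi^{(p-1)}$ can combine with the Taylor parts of $\Phi$ (which now carries the prefactor $x^m$, again by Lemma~\ref{lem-extend-xuzhou-two}) and of $(s+a)^{-q}=(-1)^q\sum_{j\ge0}\binom{j+q-1}{q-1}m^{-j-q}(s+a+m)^j$; extracting the coefficient of $(s+a+m)^{-1}$ gives $(xy)^m(-1)^q\sum_{j=0}^{p-1}c_{p-1-j}\binom{j+q-1}{q-1}m^{-j-q}$, where $c_l=(-1)^l\Li_{l+1}(x;1-a)-x\Li_{l+1}(x^{-1};a)$. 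The crucial point is that summing over $m\ge1$ collapses $\sum_{m\ge1}(xy)^m m^{-j-q}=\Li_{j+q}(xy)$, and after the reindexing $m\mapsto p-1-j$ this is exactly the first binomial sum $(-1)^q\sum_{m=0}^{p-1}\binom{p+q-m-2}{q-1}c_m\Li_{p+q-m-1}(xy)$ of the statement. The remaining difficulty is purely bookkeeping: one must reconcile the shift-by-one artifacts ($\zeta_{n-1}$ versus $\zeta_n$, and $\Li_p(\cdot\,;a+1)$ versus $\Li_p(\cdot\,;a)$, linked by $\Li_p(y;a)=y\Li_p(y;a+1)+y/a^p$) so that the accumulated boundary constants assemble into the normalized leading term $x\,S^{(a-1)}_{p;q}(y;(xy)^{-1})$ and the product $x\Li_p(y;a)\Li_q((xy)^{-1};a)$. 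Collecting the four contributions and solving for the two $S$-sums then yields the asserted identity.
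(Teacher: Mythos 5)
Your proposal is correct and follows essentially the same route as the paper: the identical contour integral $\oint_{(\infty)}(-1)^{p-1}\Phi(s;x)\phi^{(p-1)}(s+a;y)\,/\,\big((p-1)!(s+a)^q\big)\,ds$, the same pole classification (simple poles at integers, order-$p$ poles at $-m-a$ for $m\geq 1$, an order-$(p+q)$ pole at $-a$), and the same residue evaluations via the expansions of Section 2, with each group of residues matching the paper's term-for-term. The only deviations are harmless normalization choices (grouping $n=0$ with the non-positive integers, and working with $\Li_p(\cdot\,;a+1)$, $\zeta_{n-1}$, $S^{(a)}$ before shifting to the paper's $x\,S^{(a-1)}$ form), which you correctly identify as bookkeeping.
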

\begin{proof}
The proof of this theorem is based on residue calculations of the following contour integral:
\begin{align*}
\oint\limits_{\left( \infty  \right)} F^{(a)}_{p,q}(x,y;s)ds:= \oint\limits_{\left( \infty  \right)} \frac{\Phi(s;x)\phi^{(p-1)}(s+a;y)}{(p-1)!(s+a)^q} (-1)^{p-1}ds=0.
\end{align*}
The integrand $F^{(a)}_{p,q}(x,y;s)$ has the following poles throughout the complex plane: 1. All integers (simple poles); 2. $-a$ (pole of order $p+q)$ and 3. $-(n+a)$ (for positive integer $n$, poles of order $p$). Applying Lemma \ref{lem-rui-xu-one}-\ref{lem-extend-xuzhou-two}, by direct calculations, we deduce the following residues
\begin{align*}
&\Res\left(F^{(a)}_{p,q}(x,y;s),n\right)=\frac{x^{-n}y^{-n-1}}{(n+a)^q}\left(\Li_p(y;a)-\zeta_n(p;y;a-1)\right)\quad (n\geq 0),\\
&\Res\left(F^{(a)}_{p,q}(x,y;s),-n\right)=(-1)^q\frac{(xy)^n}{(n-a)^q}\left(\Li_p(y;a)y^{-1}+(-1)^p \zeta_n\Big(p;y^{-1};-a\Big)\right)\quad (n\geq 1),\\
&\Res\left(F^{(a)}_{p,q}(x,y;s),-n-a\right)=\frac1{(p-1)!} \lim_{s\rightarrow -n-a} \frac{d^{p-1}}{ds^{p-1}}\left((s+n+a)^pF^{(a)}_{p,q}(x,y;s)\right)\\
&=(-1)^q\sum_{m=0}^{p-1} \binom{p+q-m-2}{q-1} \left((-1)^m \Li_{m+1}(x;1-a)-x \Li_{m+1}\Big(x^{-1};a\Big) \right) \frac{(xy)^n}{n^{p+q-m-1}}\quad (n\geq 1)
\end{align*}
and
\begin{align*}
&\Res\left(F^{(a)}_{p,q}(x,y;s),-a\right)=\frac1{(p+q-1)!} \lim_{s\rightarrow -a} \frac{d^{p+q-1}}{ds^{p+q-1}}\left((s+a)^{p+q}F^{(a)}_{p,q}(x,y;s)\right)\\
&=(-1)^{p+q-1}\Li_{p+q}(x;1-a)-x \Li_{p+q}\Big(x^{-1};a\Big)\\
&\quad+\sum_{m+k=q-1,\atop m,k\geq 0} (-1)^k\binom{k+p-1}{p-1}\Li_{k+p}(y)\left((-1)^m \Li_{m+1}(x;1-a)-x\Li_{m+1}\Big(x^{-1};a\Big) \right).
\end{align*}
From Lemma \ref{lem-redisue-thm}, we know that
\begin{align*}
&\sum_{n=0}^\infty \Res\left(F^{(a)}_{p,q}(x,y;s),n\right)+\sum_{n=1}^\infty \Res\left(F^{(a)}_{p,q}(x,y;s),-n\right) \\
&\quad+\sum_{n=1}^\infty \Res\left(F^{(a)}_{p,q}(x,y;s),-n-a\right) +\Res\left(F^{(a)}_{p,q}(x,y;s),-a\right)=0.
\end{align*}
Finally, combining these four contributions yields the statement of Theorem \ref{thm-linearCES}.
\end{proof}

\begin{exa}Setting $(p,q)=(1,2)$ in Theorem \ref{thm-linearCES}, we have
	\begin{align*}
	&xS_{1;2}^{(a-1)}\Big(y;(xy)^{-1}\Big)+S_{1;2}^{(-a)}\Big(y^{-1};xy\Big)\nonumber\\
	&=x \Li_{1}(y;a)\Li_2\Big((xy)^{-1};a\Big)+y^{-1}\Li_1(y;a)\Li_2(xy;1-a)+\Li_{3}(x;1-a)\\
		&\quad+\left(\Li_{1}(x;1-a)-x\Li_{1}\Big(x^{-1};a\Big)\right)\Li_{2}(xy)\nonumber\\
		&\quad+\left(x\Li_{1}\Big(x^{-1};a\Big)-\Li_{1}(x;1-a)\right)\Li_{2}(y)-\left(x\Li_{2}\Big(x^{-1};a\Big)+\Li_{2}(x;1-a)\right)\Li_{1}(y).
	\end{align*}
	Setting $(p,q)=(2,1)$ in Theorem \ref{thm-linearCES}, we have
	\begin{align*}
	&x S_{2;1}^{(a-1)}\Big(y;(xy)^{-1}\Big)+S_{2;1}^{(-a)}\Big(y^{-1};xy\Big)\nonumber\\
	&=x \Li_{2}(y;a)\Li_1\Big((xy)^{-1};a\Big)-y^{-1} \Li_2(y;a)\Li_1(xy;1-a)+\Li_{3}(x;1-a)\\
	&\quad-\left(\Li_{1}(x;1-a)-x\Li_{1}\Big(x^{-1};a\Big)\right)\Li_{2}(xy)\nonumber\\
	&\quad+\left(\Li_{2}(x;1-a)+x\Li_{2}\Big(x^{-1};a\Big)\right)\Li_{1}(xy)-\left(x\Li_{1}\Big(x^{-1};a\Big)-\Li_{1}(x;1-a)\right)\Li_{2}(y).
	\end{align*}
\end{exa}

\begin{thm}\label{thm-linearCES-2} Let $x,y$ be roots of unity, $a\in \mathbb{C}\setminus \N$ and $p,q\geq 1$ with $(p,y), (q,xy)\neq (1,1)$. We have
\begin{align}
&y^{-1}\tilde{S}^{(a-1)}_{p;q}\Big(y;(xy)^{-1}\Big)-(-1)^{p+q} \tilde{S}^{(-a)}_{p;q}\Big(y^{-1};xy\Big)\nonumber\\
&=y^{-1}\Li_q\Big((xy)^{-1}\Big)\Li_p(y;a)+(-1)^q y^{-1}\Li_{q}(xy)\Li_p(y;a)+(-1)^q\binom{p+q-1}{p-1}y^{-1}\Li_{p+q}(y;a)\nonumber\\
&\quad+(-1)^q (xy)^{-1}\sum_{m=0}^{p-1} \binom{p+q-m-2}{q-1}\Li_{p+q-m-1}(xy;a)\nonumber\\&\qquad\qquad\qquad\qquad\qquad\times\left((-1)^m\Li_{m+1}(x;1-a)-x \Li_{m+1}\Big(x^{-1};a\Big)\right)\nonumber\\
&\quad+(-1)^q y^{-1} \sum_{m=0}^{q-1} \binom{p+q-m-2}{p-1}\Li_{p+q-m-1}(y;a)\left((-1)^m \Li_{m+1}\Big(x^{-1}\Big)-\Li_{m+1}(x)\right).
\end{align}
\end{thm}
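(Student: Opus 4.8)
The plan is to adapt the residue analysis of Theorem \ref{thm-linearCES} to the basis function $r(s)=s^{-q}$, which is the $\tilde S$-analogue; the effect of replacing $(s+a)^{-q}$ by $s^{-q}$ is that the unique pole of elevated order migrates from $s=-a$ to the origin. First I would apply Lemma \ref{lem-redisue-thm} to
\begin{align*}
\oint\limits_{(\infty)} G^{(a)}_{p,q}(x,y;s)\,ds:=\oint\limits_{(\infty)} \frac{\Phi(s;x)\phi^{(p-1)}(s+a;y)}{(p-1)!\,s^q}(-1)^{p-1}\,ds=0,
\end{align*}
the second contour integral listed in Section 2. The poles of $G^{(a)}_{p,q}$ fall into four families: the nonzero integers $s=n$, where $\Phi(s;x)$ gives a simple pole; the origin, where the simple pole of $\Phi(s;x)$ coincides with the order-$q$ pole of $s^{-q}$, producing a pole of order $q+1$; and the points $s=-a-j$ with $j\ge0$, where $\phi^{(p-1)}(s+a;y)$ gives a pole of order $p$ while $s^{-q}$ and $\Phi(s;x)$ stay analytic (here $a\notin\Z$ is essential). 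Summing all residues and isolating the two Euler $\tilde S$-sums will yield the identity.

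For the simple poles at nonzero integers, $\Res(\Phi(s;x),n)=x^{-n}$ by Lemma \ref{lem-rui-xu-two}, and the analytic factor $\phi^{(p-1)}(s+a;y)$ is evaluated at the integer through the leading ($k=0$) term of the appropriate expansion in Lemma \ref{lem-extend-xuzhou-one}. This gives, for $n\ge1$,
\begin{align*}
\Res(G^{(a)}_{p,q},n)=\frac{(xy)^{-n}y^{-1}}{n^q}\big(\Li_p(y;a)-\zeta_n(p;y;a-1)\big),
\end{align*}
and for $s=-n$ ($n\ge1$),
\begin{align*}
\Res(G^{(a)}_{p,q},-n)=(-1)^q\frac{(xy)^{n}}{n^q}\big(y^{-1}\Li_p(y;a)+(-1)^p\zeta_n(p;y^{-1};-a)\big).
\end{align*}
Summing over $n\ge1$ and recognizing the convolutions $\sum_n(xy)^{\mp n}n^{-q}\zeta_n(\cdots)$ as the defining series of $\tilde S^{(a-1)}_{p;q}(y;(xy)^{-1})$ and $\tilde S^{(-a)}_{p;q}(y^{-1};xy)$ produces the two Euler sums of the left-hand side together with the products $y^{-1}\Li_q((xy)^{-1})\Li_p(y;a)$ and $(-1)^q y^{-1}\Li_q(xy)\Li_p(y;a)$.

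The two remaining families supply the structural terms. At the origin I would multiply the Laurent expansion $\Phi(s;x)=s^{-1}+\sum_{m\ge0}\big((-1)^m\Li_{m+1}(x)-\Li_{m+1}(x^{-1})\big)s^m$ (Lemma \ref{lem-rui-xu-two} at $n=0$) by the Taylor series $\frac{(-1)^{p-1}}{(p-1)!}\phi^{(p-1)}(s+a;y)=\sum_{k\ge0}\binom{k+p-1}{p-1}(-1)^k y^{-1}\Li_{k+p}(y;a)\,s^k$ (Lemma \ref{lem-extend-xuzhou-one} at $n=0$, where $\zeta_0$ vanishes), and extract the coefficient of $s^{q-1}$. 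Pairing the $s^{-1}$ term with the degree-$q$ Taylor coefficient yields $(-1)^q\binom{p+q-1}{p-1}y^{-1}\Li_{p+q}(y;a)$, while the convolution over $m+k=q-1$ yields the final $q$-sum, which involves the unshifted polylogarithms $\Li_{m+1}(x),\Li_{m+1}(x^{-1})$ precisely because $\Phi$ is expanded by its $a$-free series at the origin. At $s=-a-j$ the decisive simplification is that $\phi(s+a;y)$ has only a \emph{simple} pole there, with residue $y^j$; hence $\phi^{(p-1)}(s+a;y)$ has the pure principal part $(-1)^{p-1}(p-1)!\,y^j(s+a+j)^{-p}$ and no lower-order singular terms, so
\begin{align*}
\Res(G^{(a)}_{p,q},-a-j)=y^j\big[\text{coeff.\ of }(s+a+j)^{p-1}\text{ in }\Phi(s;x)/s^q\big].
\end{align*}
Expanding $\Phi(s;x)$ by Lemma \ref{lem-extend-xuzhou-two} and $s^{-q}$ geometrically about $s=-a-j$, this coefficient equals
\begin{align*}
(-1)^q(xy)^j\sum_{m=0}^{p-1}\binom{p+q-m-2}{q-1}\frac{(-1)^m\Li_{m+1}(x;1-a)-x\Li_{m+1}(x^{-1};a)}{(a+j)^{p+q-m-1}};
\end{align*}
summing over $j\ge0$ and reindexing $n=j+1$ turns $\sum_{j\ge0}(xy)^j(a+j)^{-(p+q-m-1)}$ into $(xy)^{-1}\Li_{p+q-m-1}(xy;a)$, which is exactly the $p$-sum on the right.

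The main obstacle is the bookkeeping at the origin: because the elevated pole has moved to $s=0$, one must cleanly separate the two sources of the coefficient of $s^{q-1}$ (the $s^{-1}$ part of $\Phi$ paired with a single Taylor coefficient of $\phi^{(p-1)}$, versus the regular part of $\Phi$ convolved against the lower coefficients) and track the sign $(-1)^{q-1}=-(-1)^q$ through the cross-convolution so that it reconciles with the $(-1)^q$ normalization in the statement. A secondary caveat is the hypothesis $q\ge1$: when $q=1$ the factor $s^{-q}$ is only $O(s^{-1})$, so the vanishing of $\oint_{(\infty)}$ is not immediate from the $O(s^{-2})$ clause of Lemma \ref{lem-redisue-thm}; as in the authors' prior work this is secured by the decay of the kernel $\Phi(s;x)\phi^{(p-1)}(s+a;y)$ along the expanding circles $|s|=\rho_k$, which keeps the integrand $o(s^{-1})$ and validates the residue sum.
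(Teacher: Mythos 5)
Your proposal is correct and follows essentially the same route as the paper's own proof: the same contour integral $\oint_{(\infty)}\Phi(s;x)\phi^{(p-1)}(s+a;y)\,(-1)^{p-1}/((p-1)!\,s^q)\,ds=0$, the same classification of poles (simple poles at nonzero integers, an order-$(q+1)$ pole at the origin, order-$p$ poles at $s=-n-a$), the same residue values via Lemmas \ref{lem-rui-xu-one}--\ref{lem-extend-xuzhou-two}, and the same final application of Lemma \ref{lem-redisue-thm}. Your treatment of the poles at $s=-a-j$ via the simple-pole principal part of $\phi$ is just a repackaging of the paper's derivative formula, and your closing remark on the $q=1$ decay issue is a sensible addition rather than a deviation.
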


\begin{proof}
In the context of this paper, the proof of this theorem is based on residue computations of the following contour integral:
\begin{align*}
\oint\limits_{\left( \infty  \right)} G^{(a)}_{p,q}(x,y;s)ds:= \oint\limits_{\left( \infty  \right)} \frac{\Phi(s;x)\phi^{(p-1)}(s+a;y)}{(p-1)!s^q} (-1)^{p-1}ds=0.
\end{align*}
Obviously, $n (n\in\N),\ 0,$ and $-(n + a)\ (n\in \N_0)$ are the simple poles, $(q+1)$th-order poles, and $p$th-order poles of the integrand $G^{(a)}_{p,q}(x,y;s)$, respectively. Using Lemma \ref{lem-rui-xu-one}-\ref{lem-extend-xuzhou-two}, the following residue values can be obtained through direct calculation:
\begin{align*}
&\Res\left(G^{(a)}_{p,q}(x,y;s),-n\right)=(-1)^q\frac{(xy)^n}{n^q}\left(\Li_p(y;a)y^{-1}+(-1)^p\zeta_n\Big(p;y^{-1};-a\Big) \right) \quad (n\geq 1),\\
&\Res\left(G^{(a)}_{p,q}(x,y;s),n\right)=\frac{x^{-n}y^{-n-1}}{n^q}\left(\Li_p(y;a)-\zeta_n(p;y;a-1)\right)\quad (n\geq 1),\\
&\Res\left(G^{(a)}_{p,q}(x,y;s),-n-a\right)=\frac1{(p-1)!} \lim_{s\rightarrow -n-a} \frac{d^{p-1}}{ds^{p-1}}\left((s+n+a)^pG^{(a)}_{p,q}(x,y;s)\right)\quad (n\geq 0)\\
&=(-1)^q\sum_{m=0}^{p-1} \binom{p+q-m-2}{q-1} \left((-1)^m \Li_{m+1}(x;1-a)-x \Li_{m+1}\Big(x^{-1};a\Big) \right) \frac{(xy)^n}{(n+a)^{p+q-m-1}}
\end{align*}
and
\begin{align*}
&\Res\left(G^{(a)}_{p,q}(x,y;s),0\right)=\frac1{q!} \lim_{s\rightarrow 0} \frac{d^{q}}{ds^{q}}\left(s^{q+1}G^{(a)}_{p,q}(x,y;s)\right)\\
&=(-1)^q\binom{p+q-1}{p-1}y^{-1}\Li_{p+q}(y;a)\\&\quad+\sum_{m+k=q-1,\atop m,k\geq 0} \binom{k+p-1}{p-1}(-1)^k \Li_{k+p}(y;a)y^{-1}\left((-1)^m\Li_{m+1}(x)-\Li_{m+1}\Big(x^{-1}\Big)\right).
\end{align*}
Applying Lemma \ref{lem-redisue-thm} in conjunction with the aforementioned four residue values suffices to prove the theorem.
\end{proof}

\begin{exa}Setting $(p,q)=(1,2)$ in Theorem \ref{thm-linearCES-2}, we have
\begin{align*}
	&y^{-1}\tilde{S}^{(a-1)}_{1;2}\Big(y;(xy)^{-1}\Big)+\tilde{S}^{(-a)}_{1;2}\Big(y^{-1};xy\Big)\nonumber\\
	&=y^{-1}\Li_2\Big((xy)^{-1}\Big)\Li_1(y;a)+y^{-1}\Li_{2}(xy)\Li_1(y;a)+y^{-1}\Li_{3}(y;a)\nonumber\\
	&\quad+(xy)^{-1}\Li_{2}(xy;a)\left(\Li_{1}(x;1-a)-x\Li_{1}\Big(x^{-1};a\Big)\right)\nonumber\\
	&\quad+y^{-1}\Li_{2}(y;a)\left(\Li_{1}\Big(x^{-1}\Big)-\Li_{1}(x)\right)-y^{-1}\Li_{1}(y;a)\left(\Li_{2}\Big(x^{-1}\Big)+\Li_{2}(x)\right).
\end{align*}
Setting $(p,q)=(2,1)$ in Theorem \ref{thm-linearCES-2}, we have
\begin{align*}
	&y^{-1}\tilde{S}^{(a-1)}_{2;1}\Big(y;(xy)^{-1}\Big)+\tilde{S}^{(-a)}_{2;1}\Big(y^{-1};xy\Big)\nonumber\\
	&=y^{-1}\Li_1\Big((xy)^{-1}\Big)\Li_2(y;a)-y^{-1}\Li_{1}(xy)\Li_2(y;a)-2y^{-1}\Li_{3}(y;a)\nonumber\\
	&\quad-(xy)^{-1}\Li_{2}(xy;a)\left(\Li_{1}(x;1-a)-x \Li_{1}\Big(x^{-1};a\Big)\right)\\
	&\quad+(xy)^{-1}\Li_{1}(xy;a)\left(\Li_{2}(x;1-a)+x\Li_{2}\Big(x^{-1};a\Big)\right)-y^{-1}\Li_{2}(y;a)\left(\Li_{1}\Big(x^{-1}\Big)-\Li_{1}(x)\right).
\end{align*}
\end{exa}

\begin{thm}\label{thm-linearCES-3} Let $x,y$ be roots of unity, $a\in \mathbb{C}\setminus \N$ and $p,q\geq 1$ with $(p,y), (q,xy)\neq (1,1)$. We have
\begin{align}
&(xy)^{-1}R^{(a+1)}_{p;q}\Big(y;(xy)^{-1}\Big)-(-1)^{p+q}R_{p;q}^{(-a)}\Big(y^{-1};xy\Big)\nonumber\\
&=\Li_p(y)\Li_q\Big((xy)^{-1};a+1\Big)+(-1)^q(xy)^{-1}\Li_p(y)\Li_q(xy;-a)\nonumber\\&\quad+(-1)^q\binom{p+q-1}{p}(xy)^{-1}\Li_{p+q}(xy;-a)\nonumber\\
&\quad+(-1)^q (xy)^{-1} \sum_{m=0}^{p-1}\binom{p+q-m-2}{q-1}\left((-1)^m\Li_{m+1}(x)-\Li_{m+1}\Big(x^{-1}\Big)\right)\Li_{p+q-m-1}(xy;-a)\nonumber\\
&\quad+(-1)^q y^{-1} \sum_{m=0}^{q-1}\binom{p+q-m-2}{p-1}\Li_{p+q-m-1}(y;-a)\nonumber\\&\qquad\qquad\qquad\qquad\times\left((-1)^m\Li_{m+1}\Big(x^{-1};a+1\Big)-x^{-1}\Li_{m+1}(x;-a)\right).
\end{align}
\end{thm}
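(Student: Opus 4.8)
The plan is to run the same contour-integral/residue machinery used for Theorems~\ref{thm-linearCES} and~\ref{thm-linearCES-2}, now attaching the kernel to the $R$-sum. Concretely, I would examine
\[
\oint\limits_{(\infty)} H^{(a)}_{p,q}(x,y;s)\,ds
:=\oint\limits_{(\infty)}\frac{\Phi(s;x)\phi^{(p-1)}(s;y)}{(p-1)!\,(s+a)^q}(-1)^{p-1}\,ds=0,
\]
which vanishes exactly as in Theorems~\ref{thm-linearCES}--\ref{thm-linearCES-2}: the product $\Phi(s;x)\phi^{(p-1)}(s;y)/(p-1)!$ is a kernel in the sense of Lemma~\ref{lem-redisue-thm}, $r(s)=(s+a)^{-q}$ supplies the decay, and $(q,xy)\ne(1,1)$ guarantees convergence. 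The structural novelty compared with the $S$- and $\tilde{S}$-sums is that $\phi^{(p-1)}(s;y)$ is \emph{not} shifted by $a$, so its poles at the non-positive integers coincide with the simple poles of $\Phi(s;x)$. Hence $H^{(a)}_{p,q}$ has three kinds of poles: simple poles at the positive integers $n\ge1$ (only $\Phi$ is singular); poles of order $p+1$ at the non-positive integers $-n$, $n\in\N_0$ ($\Phi$ contributing a simple pole and $\phi^{(p-1)}$ a pole of order $p$); and a single pole of order $q$ at $s=-a$ (where both $\Phi$ and $\phi^{(p-1)}$ are analytic because $a\notin\Z$).

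First I would treat the two residue families that run parallel to the earlier proofs. At a positive integer $n\ge1$, Lemma~\ref{lem-rui-xu-two} gives the simple residue $x^{-n}$ of $\Phi(s;x)$, while the regular ($|s-n|<1$) expansion of Lemma~\ref{lem-rui-xu-one} supplies $\tfrac{(-1)^{p-1}}{(p-1)!}\phi^{(p-1)}(n;y)=y^{-n}\big(\Li_p(y)-\zeta_{n-1}(p;y)\big)$, so that
\[
\Res\big(H^{(a)}_{p,q}(x,y;s),n\big)=\frac{(xy)^{-n}}{(n+a)^q}\big(\Li_p(y)-\zeta_{n-1}(p;y)\big),\qquad n\ge1.
\]
Summing over $n\ge1$ and reindexing $n\mapsto n+1$ in the $\zeta_{n-1}$ term splits this family into $\Li_p(y)\Li_q((xy)^{-1};a+1)$ and $-(xy)^{-1}R^{(a+1)}_{p;q}(y;(xy)^{-1})$. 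At the order-$q$ pole $s=-a$ I would instead use the Taylor expansion of Lemma~\ref{lem-extend-xuzhou-two} (with $n=0$) for $\Phi(s;x)$ and that of Lemma~\ref{lem-extend-xuzhou-three} for $\phi^{(p-1)}(s;y)$; the residue is the coefficient of $(s+a)^{q-1}$ in their product, a finite Cauchy convolution (indexing $k=q-1-m$) producing the second binomial sum over $0\le m\le q-1$ with factors $\Li_{p+q-m-1}(y;-a)$ and coefficients $(-1)^{q-1-m}\big((-1)^m\Li_{m+1}(x;1-a)-x\Li_{m+1}(x^{-1};a)\big)$. To reach the stated form I would finally invoke the shift identities $x\,\Li_{m+1}(x^{-1};a)=a^{-(m+1)}+\Li_{m+1}(x^{-1};a+1)$ and $x^{-1}\Li_{m+1}(x;-a)=(-a)^{-(m+1)}+\Li_{m+1}(x;1-a)$, under which these coefficients collapse (the $a^{-(m+1)}$ pieces cancelling) to $(-1)^q\big((-1)^m\Li_{m+1}(x^{-1};a+1)-x^{-1}\Li_{m+1}(x;-a)\big)$.

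The main obstacle is the residue at the non-positive integers $-n$ ($n\in\N_0$), where the pole has order $p+1$ and which has no analogue in Theorems~\ref{thm-linearCES}--\ref{thm-linearCES-2}. Here the plan is to set $u=s+n$ and multiply three local expansions: from Lemma~\ref{lem-rui-xu-two}, $\Phi(s;x)=x^{n}\big(u^{-1}+\sum_{m\ge0}\Phi_m u^m\big)$ with $\Phi_m=(-1)^m\Li_{m+1}(x)-\Li_{m+1}(x^{-1})$; from the first ($|s+n|<1$) expansion of Lemma~\ref{lem-rui-xu-one}, $\tfrac{(-1)^{p-1}}{(p-1)!}\phi^{(p-1)}(s;y)=y^{n}\big(u^{-p}+\sum_{k\ge0}c_k u^k\big)$ with $c_k=\binom{k+p-1}{p-1}\big((-1)^k\Li_{k+p}(y)+(-1)^p\zeta_n(k+p;y^{-1})\big)$; and the binomial series $(s+a)^{-q}=\sum_{j\ge0}d_j u^j$ with $d_j=(-1)^j\binom{q+j-1}{j}(a-n)^{-q-j}$. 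Extracting the coefficient of $u^{-1}$, only three products survive, and I obtain
\[
\Res\big(H^{(a)}_{p,q}(x,y;s),-n\big)=(xy)^n\Big(d_p+\sum_{m=0}^{p-1}\Phi_m\,d_{p-1-m}+c_0\,d_0\Big),\qquad n\ge0.
\]
Summing over $n\ge0$, the term $(-1)^p\zeta_n(p;y^{-1})$ hidden in $c_0$ is the only source of harmonic-type sums and assembles directly (using $(a-n)^{-q}=(-1)^q(n-a)^{-q}$) into $(-1)^{p+q}R^{(-a)}_{p;q}(y^{-1};xy)$; every remaining $n$-sum runs over $(n-a)^{-s}$ and, by $\sum_{n\ge0}(xy)^n(n-a)^{-s}=(xy)^{-1}\Li_s(xy;-a)$, contributes a Hurwitz polylogarithm carrying the prefactor $(xy)^{-1}$. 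In this way the $\Li_p(y)$ part of $c_0 d_0$ gives $(-1)^q(xy)^{-1}\Li_p(y)\Li_q(xy;-a)$, the single term $d_p$ gives $(-1)^q\binom{p+q-1}{p}(xy)^{-1}\Li_{p+q}(xy;-a)$, and the $\Phi_m$-sum gives the first binomial sum over $0\le m\le p-1$. The delicate bookkeeping is tracking the signs from $(a-n)^{-q-j}=(-1)^{q+j}(n-a)^{-q-j}$ together with the simultaneous index shift that creates the parameter $-a$ and the factor $(xy)^{-1}$. Substituting the four residue families into Lemma~\ref{lem-redisue-thm} and transposing the two $R$-sums to the left-hand side then yields the asserted identity.
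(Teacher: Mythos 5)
Your proposal is correct and follows essentially the same route as the paper's own proof: the identical kernel $\Phi(s;x)\phi^{(p-1)}(s;y)/(p-1)!$ against $r(s)=(s+a)^{-q}$, the same pole classification (simple poles at $n\geq 1$, order-$(p+1)$ poles at $-n$ for $n\in\N_0$, an order-$q$ pole at $-a$), the same four residue families computed from Lemmas \ref{lem-rui-xu-one}, \ref{lem-rui-xu-two}, \ref{lem-extend-xuzhou-two} and \ref{lem-extend-xuzhou-three}, and the same assembly via Lemma \ref{lem-redisue-thm}. If anything, your conversion at the pole $s=-a$ is more precise than the paper's: your shift identities $x\Li_{m+1}\big(x^{-1};a\big)=a^{-(m+1)}+\Li_{m+1}\big(x^{-1};a+1\big)$ and $x^{-1}\Li_{m+1}(x;-a)=(-a)^{-(m+1)}+\Li_{m+1}(x;1-a)$, with the explicit cancellation of the $a^{-(m+1)}$ pieces, give the correct coefficient identity $(-1)^m\Li_{m+1}(x;1-a)-x\Li_{m+1}\big(x^{-1};a\big)=(-1)^m x^{-1}\Li_{m+1}(x;-a)-\Li_{m+1}\big(x^{-1};a+1\big)$, whereas the paper's quoted ``fact'' interchanges the two terms on the right and, as literally written, holds only for odd $m$ --- it is your version that, after multiplication by the convolution sign $(-1)^{q-1-m}$, produces exactly the last sum in the theorem.
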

\begin{proof}
Similar to the proofs of Theorems \ref{thm-linearCES} and \ref{thm-linearCES-2} above, the proof of this theorem requires consideration of this type of contour integral:
\begin{align*}
\oint\limits_{\left( \infty  \right)} H^{(a)}_{p,q}(x,y;s)ds:= \oint\limits_{\left( \infty  \right)} \frac{\Phi(s;x)\phi^{(p-1)}(s;y)}{(p-1)!(s+a)^q} (-1)^{p-1}ds=0.
\end{align*}
Obviously, all positive integers are simple poles, all non-positive integers are poles of order $p+1$, and $s=-a$ is a pole of order $q$. Applying Lemmas \ref{lem-rui-xu-one}, \ref{lem-rui-xu-two}, \ref{lem-extend-xuzhou-two} and \ref{lem-extend-xuzhou-three}, we obtain
\begin{align*}
&\Res\left(H^{(a)}_{p,q}(x,y;s),n\right)=\frac{(xy)^{-n}}{(n+a)^q}\left(\Li_p(y)-\zeta_{n-1}(p;y)\right)\quad (n\in \N),\\
&\Res\left(H^{(a)}_{p,q}(x,y;s),-n\right)=\frac1{p!}\lim_{s\rightarrow -n} \frac{d^p}{ds^p}\left\{(s+n)^{p+1}H^{(a)}_{p,q}(x,y;s)\right\}\quad (n\in\N_0)\\
&=(-1)^q\binom{p+q-1}{p}\frac{(xy)^n}{(n-a)^{p+q}}+(-1)^q\frac{(xy)^n}{(n-a)^q}\left(\Li_p(y)+(-1)^p\zeta_n\Big(p;y^{-1}\Big)\right)\\
&\quad+(-1)^q\sum_{m=0}^{p-1} \binom{p+q-m-2}{q-1}\left((-1)^m\Li_{m+1}(x)-\Li_{m+1}\Big(x^{-1}\Big)\right)\frac{(xy)^n}{(n-a)^{p+q-m-1}}
\end{align*}
and
\begin{align*}
&\Res\left(H^{(a)}_{p,q}(x,y;s),-a\right)\\
&=\sum_{m+k=q-1,\atop m,k\geq 0} (-1)^k\binom{k+p-1}{p-1}\Li_{k+p}(y;-a)y^{-1}\left((-1)^m\Li_{m+1}(x;1-a)-x \Li_{m+1}\Big(x^{-1};a\Big)\right).
\end{align*}
Noting the fact that
\begin{align*}
(-1)^m\Li_{m+1}(x;1-a)-x \Li_{m+1}\Big(x^{-1};a\Big)=(-1)^m\Li_{m+1}\Big(x^{-1};a+1\Big)-x^{-1}\Li_{m+1}(x;-a),
\end{align*}
and applying Lemma \ref{lem-redisue-thm}, we deduce the desired result with an elementary calculation.
\end{proof}
\begin{exa}Setting $(p,q)=(1,2)$ in Theorem \ref{thm-linearCES-3}, we have
\begin{align*}
	&(xy)^{-1}R^{(a+1)}_{1;2}\Big(y;(xy)^{-1}\Big)+R_{1;2}^{(-a)}\Big(y^{-1};xy\Big)\nonumber\\
	&=\Li_1(y)\Li_2\Big((xy)^{-1};a+1\Big)+(xy)^{-1}\Li_1(y)\Li_2(xy;-a)\nonumber\\
	&\quad+2(xy)^{-1}\Li_{3}(xy;-a)+(xy)^{-1}\left(\Li_{1}(x)-\Li_{1}\Big(x^{-1}\Big)\right)\Li_{2}(xy;-a)\\
	&\quad+y^{-1}\Li_{2}(y;-a)\left(\Li_{1}\Big(x^{-1};a+1\Big)-x^{-1}\Li_{1}(x;-a)\right)\nonumber\\
	&\quad-y^{-1}\Li_{1}(y;-a)\left(\Li_{2}\Big(x^{-1};a+1\Big)+x^{-1}\Li_{2}(x;-a)\right).
	\end{align*}
	Setting $(p,q)=(2,1)$ in Theorem \ref{thm-linearCES-3}, we have
	\begin{align*}
		&(xy)^{-1}R^{(a+1)}_{2;1}\Big(y;(xy)^{-1}\Big)+R_{2;1}^{(-a)}\Big(y^{-1};xy\Big)\nonumber\\
		&=\Li_2(y)\Li_1\Big((xy)^{-1};a+1\Big)-(xy)^{-1}\Li_2(y)\Li_1(xy;-a)-(xy)^{-1}\Li_{3}(xy;-a)\nonumber\\
		&\quad-(xy)^{-1} \left(\Li_{1}(x)-\Li_{1}\Big(x^{-1}\Big)\right)\Li_{2}(xy;-a)+(xy)^{-1} \left(\Li_{2}(x)+\Li_{2}\Big(x^{-1}\Big)\right)\Li_{1}(xy;-a)\nonumber\\
		&\quad-y^{-1}\Li_{2}(y;-a)\left(\Li_{1}\Big(x^{-1};a+1\Big)-x^{-1}\Li_{1}(x;-a)\right).
	\end{align*}
\end{exa}
\subsection{Hurwitz-type Cyclotomic Quadratic Euler Sums}
Next, in this subsection, we utilize contour integration and residue computation to present the results and specific examples for the quadratic case of these three types of Hurwitz-type cyclotomic Euler sums.
\begin{thm}\label{thm-quadratic-CES-one} Let $x,x_1,x_2$ be roots of unity, $a\in \mathbb{C}\setminus \Z$, and $p_1,p_2,q\in\N$ with $(p_1,x_1), (p_2,x_2) $ and $ (q,xx_1x_2)\neq (1,1)$. We have
\begin{align}
&x S_{p_1,p_2;q}^{(a-1)}\Big(x_1,x_2;(xx_1x_2)^{-1}\Big)+(-1)^{p_1+p_2+q} S_{p_1,p_2;q}^{(-a)}\Big(x_1^{-1},x_2^{-1};xx_1x_2\Big)\nonumber\\
&=x S_{p_1;p_2+q}^{(a-1)}\Big(x_1;(xx_1)^{-1}\Big)+x S_{p_2;p_1+q}^{(a-1)}\Big(x_2;(xx_2)^{-1}\Big)\nonumber\\
&\quad+x\Li_{p_1}(x_1;a)S_{p_2;q}^{(a-1)}\Big(x_2;(xx_1x_2)^{-1}\Big)+x\Li_{p_2}(x_2;a)S_{p_1;q}^{(a-1)}\Big(x_1;(xx_1x_2)^{-1}\Big)\nonumber\\
&\quad-(-1)^{p_2+q}x_1^{-1}\Li_{p_1}(x_1;a)S_{p_2;q}^{(-a)}\Big(x_2^{-1};xx_1x_2\Big)-(-1)^{p_1+q}x_2^{-1}\Li_{p_2}(x_2;a)S_{p_1;q}^{(-a)}\Big(x_1^{-1};xx_1x_2\Big)\nonumber\\
&\quad+(-1)^{p_1+p_2+q}\Li_{p_1+p_2+q}(x;1-a)-x \Li_{p_1}(x_1;a)\Li_{p_2+q}\Big((xx_1)^{-1};a\Big)\nonumber\\&\quad-x \Li_{p_2}(x_2;a)\Li_{p_1+q}\Big((xx_2)^{-1};a\Big)-x\Li_{p_1}(x_1;a)\Li_{p_2}(x_2;a)\Li_q\Big((xx_1x_2)^{-1};a\Big)\nonumber\\
&\quad-(-1)^q(x_1x_2)^{-1}\Li_{p_1}(x_1;a)\Li_{p_2}(x_2;a)\Li_q(xx_1x_2;1-a)\nonumber\\
&\quad-\sum_{m+k=p_1+q-1,\atop m,k\geq 0} (-1)^k \binom{k+p_2-1}{p_2-1}\Li_{k+p_2}(x_2)\left((-1)^m\Li_{m+1}(x;1-a)-x \Li_{m+1}\Big(x^{-1};a\Big)\right)\nonumber\\
&\quad-\sum_{m+k=p_2+q-1,\atop m,k\geq 0} (-1)^k \binom{k+p_1-1}{p_1-1}\Li_{k+p_1}(x_1)\left((-1)^m\Li_{m+1}(x;1-a)-x \Li_{m+1}\Big(x^{-1};a\Big)\right)\nonumber\\
&\quad-(-1)^q\sum_{m=0}^{p_1+p_2-1}\binom{p_1+p_2+q-m-2}{q-1}\left((-1)^m\Li_{m+1}(x;1-a)-x \Li_{m+1}\Big(x^{-1};a\Big)\right)\nonumber\\&\qquad\qquad\qquad\qquad\qquad\times \Li_{p_1+p_2+q-m-1}(xx_1x_2)\nonumber\\
&\quad-\sum_{m+k_1+k_2=q-1,\atop m,k_1,k_2\geq 0}(-1)^{k_1+k_2}\binom{k_1+p_1-1}{p_1-1}\binom{k_2+p_2-1}{p_2-1}\Li_{k_1+p_1}(x_1)\Li_{k_2+p_2}(x_2)\nonumber\\&\qquad\qquad\qquad\qquad\qquad\times \left((-1)^m\Li_{m+1}(x;1-a)-x \Li_{m+1}\Big(x^{-1};a\Big)\right)\nonumber\\
&\quad-(-1)^q \sum_{m+k\leq p_2-1,\atop m,k\geq 0} \binom{k+p_1-1}{p_1-1}\binom{p_2+q-m-k-2}{q-1} \nonumber\\&\qquad\qquad\qquad\qquad\qquad\times\left((-1)^m\Li_{m+1}(x;1-a)-x \Li_{m+1}\Big(x^{-1};a\Big)\right)\nonumber\\&\quad\times\left((-1)^k\Li_{k+p_1}(x_1)\Li_{p_2+q-m-k-1}(xx_1x_2)+(-1)^{p_1}S_{k+p_1;p_2+q-m-k-1}\Big(x_1^{-1};xx_1x_2\Big)\right)\nonumber\\
&\quad-(-1)^q \sum_{m+k\leq p_1-1,\atop m,k\geq 0} \binom{k+p_2-1}{p_2-1}\binom{p_1+q-m-k-2}{q-1} \nonumber\\&\qquad\qquad\qquad\qquad\qquad\times\left((-1)^m\Li_{m+1}(x;1-a)-x \Li_{m+1}\Big(x^{-1};a\Big)\right)\nonumber\\&\quad\times\left((-1)^k\Li_{k+p_2}(x_2)\Li_{p_1+q-m-k-1}(xx_1x_2)+(-1)^{p_2}S_{k+p_2;p_1+q-m-k-1}\Big(x_2^{-1};xx_1x_2\Big)\right).
\end{align}
\end{thm}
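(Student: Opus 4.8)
The plan is to reproduce, at degree two, the contour--integral scheme used for the three linear cases (Theorems~\ref{thm-linearCES}--\ref{thm-linearCES-3}). Concretely, I would apply Cauchy's residue theorem (Lemma~\ref{lem-redisue-thm}) to the $r=2$ instance of the first kernel of Section~2, namely
\[
\oint\limits_{(\infty)} \frac{\Phi(s;x)\,\phi^{(p_1-1)}(s+a;x_1)\,\phi^{(p_2-1)}(s+a;x_2)}{(p_1-1)!\,(p_2-1)!\,(s+a)^q}(-1)^{p_1+p_2-2}\,ds=0.
\]
The rational factor is $O(s^{-2})$ and $\Phi(s;x)$ is a kernel, so the integral vanishes and the total residue sum is zero. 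Since $a\notin\Z$ the poles fall into four disjoint families: the nonnegative integers $s=n$ and the negative integers $s=-n$ $(n\ge1)$, which are simple poles of $\Phi(s;x)$; the point $s=-a$, where the two generalized digamma factors together with $(s+a)^{-q}$ produce a pole of order $p_1+p_2+q$; and the points $s=-n-a$ $(n\ge1)$, where the two digamma factors give a pole of order $p_1+p_2$.

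At the simple poles I would use $\Res(\Phi(s;x),\pm n)=x^{\mp n}$ (Lemma~\ref{lem-rui-xu-two}) together with the constant terms of the Maclaurin expansions of $\phi^{(p_i-1)}(s+a;x_i)$ from Lemma~\ref{lem-extend-xuzhou-one}. At $s=n$ this yields the product $(\Li_{p_1}(x_1;a)-\zeta_n(p_1;x_1;a-1))(\Li_{p_2}(x_2;a)-\zeta_n(p_2;x_2;a-1))$, which I expand into four pieces: summed over $n$, the $\zeta_n\zeta_n$ piece becomes the Hurwitz quadratic $S$-sum, the two mixed pieces become $\Li_{p_i}(x_i;a)$ times a linear $S$-sum, and the $\Li\Li$ piece becomes a product of three Hurwitz polylogarithms. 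The poles at $-n$ are treated with the companion expansion of Lemma~\ref{lem-extend-xuzhou-one}; there $(s+a)^{-q}$ contributes $(-1)^q(n-a)^{-q}$ and the digamma expansions carry the parameter $-a$, so denominators and parameters already match and the resulting series are genuine $S^{(-a)}$-sums. On the $+n$ side, by contrast, the residues carry denominators $(n+a)^q$ while the finite sums carry parameter $a-1$; a single shift $n\mapsto n+1$ reconciles them with $S^{(a-1)}$, and applying $\zeta_{n+1}=\zeta_n+(\text{last term})$ to each factor is exactly what generates the two \emph{stuffle}-type linear sums $x\,S^{(a-1)}_{p_1;p_2+q}(x_1;(xx_1)^{-1})$ and $x\,S^{(a-1)}_{p_2;p_1+q}(x_2;(xx_2)^{-1})$ on the right-hand side.

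The substantive work is the two higher-order residues. At $s=-a$ I would set $u=s+a$, expand $\Phi(s;x)$ by Lemma~\ref{lem-extend-xuzhou-two} (with $n=0$) as $\sum_m((-1)^m\Li_{m+1}(x;1-a)-x\Li_{m+1}(x^{-1};a))u^m$, expand each $\phi^{(p_i-1)}(s+a;x_i)$ by Lemma~\ref{lem-rui-xu-one} (with $n=0$) as a principal part $u^{-p_i}$ plus the regular series in $\Li_{k+p_i}(x_i)$, and read off the coefficient of $u^{-1}$ in the product with $u^{-q}$. This is a triple Cauchy product: keeping both principal parts gives the single term $\Li_{p_1+p_2+q}(x;1-a)$; keeping exactly one gives the two double convolutions $\sum_{m+k=p_i+q-1}$; keeping neither gives the triple convolution $\sum_{m+k_1+k_2=q-1}$ with the two binomial factors $\binom{k_1+p_1-1}{p_1-1}\binom{k_2+p_2-1}{p_2-1}$. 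At $s=-n-a$ I would set $u=s+n+a$, expand each $\phi^{(p_i-1)}(s+a;x_i)$ near its order-$p_i$ pole by Lemma~\ref{lem-rui-xu-one} (now with the $\zeta_n(\,\cdot\,;x_i^{-1})$ terms present), expand $\Phi$ and $(u-n)^{-q}$ analytically, and extract the coefficient of $u^{-1}$; summing over $n\ge1$ turns the constant parts into $\Li(xx_1x_2)$-values and the $\zeta_n(\,\cdot\,;x_i^{-1})$ parts into the linear cyclotomic $S$-sums $S_{k+p_1;\,p_2+q-m-k-1}(x_1^{-1};xx_1x_2)$ and $S_{k+p_2;\,p_1+q-m-k-1}(x_2^{-1};xx_1x_2)$ of the last two families. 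I expect the order-$(p_1+p_2+q)$ residue to be the main obstacle: the triple Leibniz expansion, the repackaging of the nested index sums into the stated binomial coefficients $\binom{p_1+p_2+q-m-2}{q-1}$ and $\binom{k+p_i-1}{p_i-1}$, and the tracking and combining of the auxiliary $x\Li_{\bullet}(x^{-1};a)$ terms and the $n=0$ boundary contributions that surface in several residues and in the reindexing, so that they merge into the clean terms of the statement. Collecting the four residue contributions, setting their sum to zero, and moving the $S$-sums from the $+n$ and $-n$ poles to the left then yields the asserted identity.
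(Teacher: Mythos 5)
Your proposal matches the paper's proof essentially step for step: the same kernel $\Phi(s;x)\phi^{(p_1-1)}(s+a;x_1)\phi^{(p_2-1)}(s+a;x_2)(s+a)^{-q}$, the same four families of poles (simple poles at $\pm n$, order $p_1+p_2+q$ at $-a$, order $p_1+p_2$ at $-n-a$), the same Lemmas \ref{lem-rui-xu-one}--\ref{lem-extend-xuzhou-two} for the expansions, and the same final assembly via Lemma \ref{lem-redisue-thm}. Your extra detail on the $n\mapsto n+1$ reindexing that produces the stuffle-type linear sums $xS^{(a-1)}_{p_1;p_2+q}$, $xS^{(a-1)}_{p_2;p_1+q}$ is exactly the bookkeeping the paper leaves implicit in "substituting the four residue results."
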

\begin{proof}
The proof of this theorem is based on residue calculations of the following contour integral:
\[\oint\limits_{\left( \infty  \right)}F^{(a)}_{p_1p_2,q}(x,x_1,x_2;s)ds:= \oint\limits_{\left( \infty  \right)}\frac{\Phi(s;x)\phi^{(p_1-1)}(s+a;x_1)\phi^{(p_2-1)}(s+a;x_2)}{(p_1-1)!(p_2-1)!(s+a)^q} (-1)^{p_1+p_2}ds=0.\]
It is evident that the integrand $F^{(a)}_{p_1p_2,q}(x,x_1,x_2;s)$ possesses the following poles in the complex plane: 1. All integer points are simple poles; 2. $s=-a$ is a pole of order $p_1+p_2+q$; 3. $s=-n-a$ (where $n$ is a positive integer) is a pole of order $p_1+p_2$. Applying Lemmas \ref{lem-rui-xu-two} and \ref{lem-extend-xuzhou-one}, we can compute the residues at simple poles located at integer points as follows:
\begin{align*}
&\Res\left(F^{(a)}_{p_1p_2,q}(\cdot;s),n\right)=\frac{x^{-n}(x_1x_2)^{-n-1}}{(n+a)^q}\left(\Li_{p_1}(x_1;a)-\zeta_n(p_1;x_1;a-1)\right)
\\&\qquad\qquad\qquad\qquad\qquad\qquad\qquad\times\left(\Li_{p_2}(x_2;a)-\zeta_n(p_2;x_2;a-1)\right)\quad (n\in \N_0),\\
&\Res\left(F^{(a)}_{p_1p_2,q}(\cdot;s),-n\right)=(-1)^q \frac{(xx_1x_2)^n}{(n-a)^q}\left(\Li_{p_1}(x_1;a)x_1^{-1}+(-1)^{p_1}\zeta_n\Big(p_1;x_1^{-1};-a\Big)\right)\\&\qquad\qquad\qquad\qquad\qquad\qquad\qquad
\times\left(\Li_{p_2}(x_2;a)x_2^{-1}+(-1)^{p_2}\zeta_n\Big(p_2;x_2^{-1};-a\Big)\right)\quad (n\in \N).
\end{align*}
Applying Lemmas \ref{lem-rui-xu-one} and \ref{lem-extend-xuzhou-two}, after extensive calculations, the residues at the poles located at $-a$ and $-n - a\ (n\in \N)$ can be obtained as follows:
\begin{align*}
&\Res\left(F^{(a)}_{p_1p_2,q}(\cdot;s),-a\right)\\&=\frac{1}{(p_1+p_2+q-1)!}\lim_{s\rightarrow -a}\frac{d^{p_1+p_2+q-1}}{ds^{p_1+p_2+q-1}}\left\{(s+a)^{p_1+p_2+q}F_{p_1p_2,q}^{(a)}(x,x_1,x_2;s)\right\}\\
&=(-1)^{p_1+p_2+q-1}\Li_{p_1+p_2+q}(x;1-a)-x\Li_{p_1+p_2+q}\Big(x^{-1};a\Big)\\
&\quad+\sum_{m+k=p_1+q-1,\atop m,k\geq 0} (-1)^k \binom{k+p_2-1}{p_2-1}\Li_{k+p_2}(x_2)\left((-1)^m\Li_{m+1}(x;1-a)-x \Li_{m+1}\Big(x^{-1};a\Big)\right)\\
&\quad+\sum_{m+k=p_2+q-1,\atop m,k\geq 0} (-1)^k \binom{k+p_1-1}{p_1-1}\Li_{k+p_1}(x_1)\left((-1)^m\Li_{m+1}(x;1-a)-x \Li_{m+1}\Big(x^{-1};a\Big)\right)\\
&\quad+\sum_{m+k_1+k_2=q-1,\atop m,k_1,k_2\geq 0}(-1)^{k_1+k_2}\binom{k_1+p_1-1}{p_1-1}\binom{k_2+p_2-1}{p_2-1}\Li_{k_1+p_1}(x_1)\Li_{k_2+p_2}(x_2)\nonumber\\&\qquad\qquad\qquad\qquad\qquad\times \left((-1)^m\Li_{m+1}(x;1-a)-x \Li_{m+1}\Big(x^{-1};a\Big)\right)
\end{align*}
and for $n\in\N$,
\begin{align*}
&\Res\left(F^{(a)}_{p_1p_2,q}(\cdot;s),-n-a\right)\\&=\frac{1}{(p_1+p_2-1)!}\lim_{s\rightarrow -n-a}\frac{d^{p_1+p_2-1}}{ds^{p_1+p_2-1}}\left\{(s+n+a)^{p_1+p_2}F_{p_1p_2,q}^{(a)}(x,x_1,x_2;s)\right\}\\
&=(-1)^q\sum_{m=0}^{p_1+p_2-1}\binom{p_1+p_2+q-m-2}{q-1}\left((-1)^m\Li_{m+1}(x;1-a)-x \Li_{m+1}\Big(x^{-1};a\Big)\right)\\&\qquad\qquad\qquad\qquad\qquad\times \frac{(xx_1x_2)^n}{n^{p_1+p_2+q-m-1}}\\
&\quad+(-1)^q \sum_{m+k\leq p_2-1,\atop m,k\geq 0} \binom{k+p_1-1}{p_1-1}\binom{p_2+q-m-k-2}{q-1}\frac{(xx_1x_2)^n}{n^{p_2+q-m-k-1}} \nonumber\\&\quad\times\left((-1)^m\Li_{m+1}(x;1-a)-x \Li_{m+1}\Big(x^{-1};a\Big)\right)\left((-1)^k\Li_{k+p_1}(x_1)+(-1)^{p_1}\zeta_n\Big(k+p_1;x_1^{-1}\Big)\right)\\
&\quad+(-1)^q \sum_{m+k\leq p_1-1,\atop m,k\geq 0} \binom{k+p_2-1}{p_2-1}\binom{p_1+q-m-k-2}{q-1}\frac{(xx_1x_2)^n}{n^{p_1+q-m-k-1}} \nonumber\\&\quad\times\left((-1)^m\Li_{m+1}(x;1-a)-x \Li_{m+1}\Big(x^{-1};a\Big)\right)\left((-1)^k\Li_{k+p_2}(x_2)+(-1)^{p_2}\zeta_n\Big(k+p_2;x_2^{-1}\Big)\right).
\end{align*}
By Lemma \ref{lem-redisue-thm}, we have
\begin{align*}
&\sum_{n=0}^\infty \Res\left(F^{(a)}_{p_1p_2,q}(\cdot;s),n\right)+\sum_{n=1}^\infty \Res\left(F^{(a)}_{p_1p_2,q}(\cdot;s),-n\right)\\&\quad+\sum_{n=1}^\infty\Res\left(F^{(a)}_{p_1p_2,q}(\cdot;s),-n-a\right)+\Res\left(F^{(a)}_{p_1p_2,q}(\cdot;s),-a\right)=0.
\end{align*}
Substituting the four residue results obtained above consequently proves Theorem \ref{thm-quadratic-CES-one}.
\end{proof}

\begin{exa}Setting $(p_1,p_2,q)=(1,1,2)$ in Theorem \ref{thm-quadratic-CES-one}, we have
\begin{align*}
	&xS_{1,1;2}^{(a-1)}\Big(x_1,x_2;(xx_1x_2)^{-1}\Big)+S_{1,1;2}^{(-a)}\Big(x_1^{-1},x_2^{-1};xx_1x_2\Big)\nonumber\\
	&=xS_{1;3}^{(a-1)}\Big(x_1;(xx_1)^{-1}\Big)+xS_{1;3}^{(a-1)}\Big(x_2;(xx_2)^{-1}\Big)\nonumber\\
	&\quad+x\Li_{1}(x_1;a)S_{1;2}^{(a-1)}\Big(x_2;(xx_1x_2)^{-1}\Big)+x\Li_{1}(x_2;a)S_{1;2}^{(a-1)}\Big(x_1;(xx_1x_2)^{-1}\Big)\nonumber\\
	&\quad+x_1^{-1}\Li_{1}(x_1;a)S_{1;2}^{(-a)}\Big(x_2^{-1};xx_1x_2\Big)+x_2^{-1}\Li_{1}(x_2;a)S_{1;2}^{(-a)}\Big(x_1^{-1};xx_1x_2\Big)\nonumber\\
	&\quad+\Li_{4}(x;1-a)-x\Li_{1}(x_1;a)\Li_{3}\Big((xx_1)^{-1};a\Big)\nonumber\\
	&\quad-x\Li_{1}(x_2;a)\Li_{3}\Big((xx_2)^{-1};a\Big)-x\Li_{1}(x_1;a)\Li_{1}(x_2;a)\Li_2\Big((xx_1x_2)^{-1};a\Big)\nonumber\\
	&\quad-(x_1x_2)^{-1}\Li_{1}(x_1;a)\Li_{1}(x_2;a)\Li_2(xx_1x_2;1-a)\nonumber\\
	&\quad-2\left(\Li_{1}(x;1-a)-x\Li_{1}\Big(x^{-1};a\Big)\right)\Li_{3}(xx_1x_2)+\left(\Li_{2}(x;1-a)+x\Li_{2}\Big(x^{-1};a\Big)\right)\Li_{2}(xx_1x_2)\nonumber\\
	&\quad-\left(\Li_{1}(x;1-a)-x\Li_{1}\Big(x^{-1};a\Big)\right)\left(\Li_{1}(x_1)\Li_{2}(xx_1x_2)-S_{1;2}\Big(x_1^{-1};xx_1x_2\Big)\right)\\
	&\quad-\left(\Li_{1}(x;1-a)-x\Li_{1}\Big(x^{-1};a\Big)\right)\left(\Li_{1}(x_2)\Li_{2}(xx_1x_2)-S_{1;2}\Big(x_2^{-1};xx_1x_2\Big)\right)\\
	&\quad-\Li_{2}(x_2)\left(\Li_{2}(x;1-a)+x\Li_{2}\Big(x^{-1};a\Big)\right)-\Li_{3}(x_2)\left(\Li_{1}(x;1-a)-x\Li_{1}\Big(x^{-1};a\Big)\right)\\
	&\quad-\Li_{1}(x_2)\left(\Li_{3}(x;1-a)-x\Li_{3}\Big(x^{-1};a\Big)\right)-\Li_{2}(x_1)\left(\Li_{2}(x;1-a)+x\Li_{2}\Big(x^{-1};a\Big)\right)\\
	&\quad-\Li_{3}(x_1)\left(\Li_{1}(x;1-a)-x\Li_{1}\Big(x^{-1};a\Big)\right)-\Li_{1}(x_1)\left(\Li_{3}(x;1-a)-x\Li_{3}\Big(x^{-1};a\Big)\right)\\
	&\quad+\Li_{1}(x_1)\Li_{1}(x_2)\left(\Li_{2}(x;1-a)+x \Li_{2}\Big(x^{-1};a\Big)\right)\\
	&\quad+\Li_{2}(x_1)\Li_{1}(x_2)\left(\Li_{1}(x;1-a)-x \Li_{1}\Big(x^{-1};a\Big)\right)\\
	&\quad+\Li_{1}(x_1)\Li_{2}(x_2)\left(\Li_{1}(x;1-a)-x \Li_{1}\Big(x^{-1};a\Big)\right).
\end{align*}
Setting $(p_1,p_2,q)=(1,2,2)$ in Theorem \ref{thm-quadratic-CES-one}, we have
\begin{align*}
	&xS_{1,2;2}^{(a-1)}\Big(x_1,x_2;(xx_1x_2)^{-1}\Big)-S_{1,2;2}^{(-a)}\Big(x_1^{-1},x_2^{-1};xx_1x_2\Big)\nonumber\\
	&=xS_{1;4}^{(a-1)}\Big(x_1;(xx_1)^{-1}\Big)+xS_{2;3}^{(a-1)}\Big(x_2;(xx_2)^{-1}\Big)\nonumber\\
	&\quad+x\Li_{1}(x_1;a)S_{2;2}^{(a-1)}\Big(x_2;(xx_1x_2)^{-1}\Big)+x\Li_{2}(x_2;a)S_{1;2}^{(a-1)}\Big(x_1;(xx_1x_2)^{-1}\Big)\nonumber\\
	&\quad-x_1^{-1}\Li_{1}(x_1;a)S_{2;2}^{(-a)}\Big(x_2^{-1};xx_1x_2\Big)+x_2^{-1}\Li_{2}(x_2;a)S_{1;2}^{(-a)}\Big(x_1^{-1};xx_1x_2\Big)\nonumber\\
	&\quad-\Li_{5}(x;1-a)-x\Li_{1}(x_1;a)\Li_{4}\Big((xx_1)^{-1};a\Big)\nonumber\\&\quad-x \Li_{2}(x_2;a)\Li_{3}\Big((xx_2)^{-1};a\Big)-x\Li_{1}(x_1;a)\Li_{2}(x_2;a)\Li_2\Big((xx_1x_2)^{-1};a\Big)\nonumber\\
	&\quad-(x_1x_2)^{-1}\Li_{1}(x_1;a)\Li_{2}(x_2;a)\Li_2(xx_1x_2;1-a)\nonumber\\
	&\quad-3\left(\Li_{1}(x;1-a)-x\Li_{1}\Big(x^{-1};a\Big)\right)\Li_{4}(xx_1x_2)\nonumber\\
	&\quad+2\left(\Li_{2}(x;1-a)+x\Li_{2}\Big(x^{-1};a\Big)\right)\Li_{3}(xx_1x_2)\nonumber\\
	&\quad-\left(\Li_{3}(x;1-a)-x\Li_{3}\Big(x^{-1};a\Big)\right)\Li_{2}(xx_1x_2)\nonumber\\
	&\quad-2\left(\Li_{1}(x;1-a)-x\Li_{1}\Big(x^{-1};a\Big)\right)\left(\Li_{1}(x_1)\Li_{3}(xx_1x_2)-S_{1;3}\Big(x_1^{-1};xx_1x_2\Big)\right)\nonumber\\
	&\quad+\left(\Li_{2}(x;1-a)+x\Li_{2}\Big(x^{-1};a\Big)\right)\left(\Li_{1}(x_1)\Li_{2}(xx_1x_2)-S_{1;2}\Big(x_1^{-1};xx_1x_2\Big)\right)\nonumber\\
	&\quad+\left(\Li_{1}(x;1-a)-x\Li_{1}\Big(x^{-1};a\Big)\right)\left(\Li_{2}(x_1)\Li_{2}(xx_1x_2)+S_{2;2}\Big(x_1^{-1};xx_1x_2\Big)\right)\nonumber\\
	&\quad-\left(\Li_{1}(x;1-a)-x\Li_{1}\Big(x^{-1};a\Big)\right)\left(\Li_{2}(x_2)\Li_{2}(xx_1x_2)+S_{2;2}\Big(x_2^{-1};xx_1x_2\Big)\right)\\
	&\quad-3\Li_{4}(x_2)\left(\Li_{1}(x;1-a)-x\Li_{1}\Big(x^{-1};a\Big)\right)-\Li_{2}(x_2)\left(\Li_{3}(x;1-a)-x\Li_{3}\Big(x^{-1};a\Big)\right)\\
	&\quad-2\Li_{3}(x_2)\left(\Li_{2}(x;1-a)+x\Li_{2}\Big(x^{-1};a\Big)\right)+\Li_{1}(x_1)\left(\Li_{4}(x;1-a)+x\Li_{4}\Big(x^{-1};a\Big)\right)\\
	&\quad+\Li_{2}(x_1)\left(\Li_{3}(x;1-a)-x\Li_{3}\Big(x^{-1};a\Big)\right)+\Li_{3}(x_1)\left(\Li_{2}(x;1-a)+x\Li_{2}\Big(x^{-1};a\Big)\right)\\
	&\quad+\Li_{4}(x_1)\left(\Li_{1}(x;1-a)-x\Li_{1}\Big(x^{-1};a\Big)\right)+\Li_{1}(x_1)\Li_{2}(x_2)\left(\Li_{2}(x;1-a)+x \Li_{2}\Big(x^{-1};a\Big)\right)\\
	&\quad+\Li_{2}(x_1)\Li_{2}(x_2)\left(\Li_{1}(x;1-a)-x\Li_{1}\Big(x^{-1};a\Big)\right)\\
	&\quad+2\Li_{1}(x_1)\Li_{3}(x_2)\left(\Li_{1}(x;1-a)-x\Li_{1}\Big(x^{-1};a\Big)\right).
\end{align*}
\end{exa}

\begin{thm}\label{thm-quadratic-CES-two} Let $x,x_1,x_2$ be roots of unity, $a\in \mathbb{C}\setminus \Z$, and $p_1,p_2,q\in\N$ with $(p_1,x_1), (p_2,x_2) $ and $ (q,xx_1x_2)\neq (1,1)$. We have
\begin{align}
	&(-1)^{p_1+p_2+q+1}\tilde{S}^{(-a)}_{p_1,p_2;q}\Big(x_1^{-1},x_2^{-1};xx_1x_2\Big)-(x_1x_2)^{-1}\tilde{S}^{(a-1)}_{p_1,p_2;q}\Big(x_1,x_2;(xx_1x_2)^{-1}\Big)\nonumber\\
	&=(x_1x_2)^{-1}\Li_{p_1}(x_1;a)\Li_{p_2}(x_2;a)\Li_q\Big((xx_1x_2)^{-1}\Big)-(x_1x_2)^{-1}\Li_{p_1}(x_1;a)\tilde{S}^{(a-1)}_{p_2;q}\Big(x_2;(xx_1x_2)^{-1}\Big)\nonumber\\
	&\quad-(x_1x_2)^{-1}\Li_{p_2}(x_2;a)\tilde{S}^{(a-1)}_{p_1;q}\Big(x_1;(xx_1x_2)^{-1}\Big)\nonumber\\
	&\quad+(-1)^q(x_1x_2)^{-1}\Li_{p_1}(x_1;a)\Li_{p_2}(x_2;a)\Li_q(xx_1x_2)+(-1)^{q+p_2}x_1^{-1}\Li_{p_1}(x_1;a)\tilde{S}^{(-a)}_{p_2;q}\Big(x_2^{-1};xx_1x_2\Big)\nonumber\\
	&\quad+(-1)^{q+p_1}x_2^{-1}\Li_{p_2}(x_2;a)\tilde{S}^{(-a)}_{p_1;q}\Big(x_1^{-1};xx_1x_2\Big)\nonumber\\
	&\quad+(-1)^q\sum_{k=0}^{p_1+p_2-1}\binom{p_1+p_2+q-k-2}{q-1}\left((-1)^k\Li_{k+1}(x;1-a)-x \Li_{k+1}\Big(x^{-1};a\Big)\right)\nonumber\\&\qquad\qquad\qquad\qquad\qquad\times(xx_1x_2)^{-1}\Li_{p_1+p_2+q-k-1}(xx_1x_2;a)\nonumber\\
	&\quad+(-1)^q \sum_{k_1+k_2\leq p_2-1,\atop k_1,k_2\geq 0} \binom{k_2+p_1-1}{p_1-1}\binom{p_2+q-k_1-k_2-2}{q-1}(xx_1x_2)^{-1} \nonumber\\
	&\quad\times\left((-1)^{k_1}\Li_{k_1+1}(x;1-a)-x \Li_{k_1+1}\Big(x^{-1};a\Big)\right)(-1)^{k_2}\Li_{k_2+p_1}(x_1)\Li_{p_2+q-k_1-k_2-1}(xx_1x_2;a)\nonumber\\
	&\quad+(-1)^q \sum_{k_1+k_2\leq p_2-1,\atop k_1,k_2\geq 0} \binom{k_2+p_1-1}{p_1-1}\binom{p_2+q-k_1-k_2-2}{q-1} \nonumber\\
	&\quad\times\left((-1)^{k_1}\Li_{k_1+1}(x;1-a)-x \Li_{k_1+1}\Big(x^{-1};a\Big)\right)(-1)^{p_1}R^{(a)}_{k_2+p_1;p_2+q-k_1-k_2-1}\Big(x_1^{-1};xx_1x_2\Big)\nonumber\\
	&\quad+(-1)^q \sum_{k_1+k_2\leq p_1-1,\atop k_1,k_2\geq 0} \binom{k_2+p_2-1}{p_2-1}\binom{p_1+q-k_1-k_2-2}{q-1}(xx_1x_2)^{-1} \nonumber\\
	&\quad\times\left((-1)^{k_1}\Li_{k_1+1}(x;1-a)-x \Li_{k_1+1}\Big(x^{-1};a\Big)\right)(-1)^{k_2}\Li_{k_2+p_2}(x_2)\Li_{p_1+q-k_1-k_2-1}(xx_1x_2;a)\nonumber\\
	&\quad+(-1)^q \sum_{k_1+k_2\leq p_1-1,\atop k_1,k_2\geq 0} \binom{k_2+p_2-1}{p_2-1}\binom{p_1+q-k_1-k_2-2}{q-1} \nonumber\\
	&\quad\times\left((-1)^{k_1}\Li_{k_1+1}(x;1-a)-x \Li_{k_1+1}\Big(x^{-1};a\Big)\right)(-1)^{p_2}R^{(a)}_{k_2+p_2;p_1+q-k_1-k_2-1}\Big(x_2^{-1};xx_1x_2\Big)\nonumber\\
	&\quad+\sum_{k_1+k_2=q,\atop k_1,k_2\geq 0}(x_1x_2)^{-1} (-1)^q \binom{k_1+p_1-1}{p_1-1}\binom{k_2+p_2-1}{p_2-1}\Li_{k_1+p_1}(x_1;a)\Li_{k_2+p_2}(x_2;a)\nonumber\\
	&\quad+\sum_{k_1+k_2+k_3=q-1,\atop k_1,k_2,k_3\geq 0}(-1)^{k_2+k_3}\binom{k_2+p_1-1}{p_1-1}\binom{k_3+p_2-1}{p_2-1}\Li_{k_2+p_1}(x_1;a)\Li_{k_3+p_2}(x_2;a)\nonumber\\
	&\quad\times(x_1x_2)^{-1}\left((-1)^{k_1}\Li_{{k_1}+1}(x)- \Li_{{k_1}+1}\Big(x^{-1}\Big)\right).
\end{align}
\end{thm}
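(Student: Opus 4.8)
The plan is to follow the residue strategy of Theorem~\ref{thm-quadratic-CES-one} verbatim, replacing the basis function $(s+a)^{-q}$ by $s^{-q}$; this exchange is exactly what turns the $S$-sum into the $\tilde{S}$-sum. Thus I would start from
\begin{align*}
\oint\limits_{(\infty)} G^{(a)}_{p_1p_2,q}(x,x_1,x_2;s)\,ds := \oint\limits_{(\infty)} \frac{\Phi(s;x)\phi^{(p_1-1)}(s+a;x_1)\phi^{(p_2-1)}(s+a;x_2)}{(p_1-1)!(p_2-1)!\,s^q}(-1)^{p_1+p_2}\,ds = 0,
\end{align*}
the vanishing being guaranteed by the kernel hypotheses and Lemma~\ref{lem-redisue-thm}. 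The integrand has poles at the nonzero integers $\pm n$ ($n\in\N$), simple and coming from $\Phi$; at the origin, of order $q+1$ (order $q$ from $s^{-q}$ plus the simple pole of $\Phi$); and at $s=-n-a$ for $n\in\N_0$, of order $p_1+p_2$ arising from the two digamma factors (both $\Phi$ and $s^{-q}$ being regular there, since $a\notin\Z$). After absorbing the global sign $(-1)^{p_1+p_2}$ against the factors $(-1)^{p_i-1}$ produced by Lemmas~\ref{lem-rui-xu-one} and \ref{lem-extend-xuzhou-one}, the integrand becomes a genuine product of the power and Laurent series furnished by those lemmas, which makes the residue extraction mechanical.

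For the simple poles at $s=n$ and $s=-n$ I would pair the residue of $\Phi$ (equal to $x^{\mp n}$ by Lemma~\ref{lem-rui-xu-two}) with the leading ($k=0$) coefficients of the $\phi$-expansions in Lemma~\ref{lem-extend-xuzhou-one}. At $s=n$ this gives $(x_1x_2)^{-1}(xx_1x_2)^{-n}n^{-q}$ times $\bigl(\Li_{p_1}(x_1;a)-\zeta_n(p_1;x_1;a-1)\bigr)\bigl(\Li_{p_2}(x_2;a)-\zeta_n(p_2;x_2;a-1)\bigr)$, and at $s=-n$ it gives $(-1)^q(xx_1x_2)^n n^{-q}$ times $\prod_{i}\bigl(\Li_{p_i}(x_i;a)x_i^{-1}+(-1)^{p_i}\zeta_n(p_i;x_i^{-1};-a)\bigr)$. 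Multiplying out each product and summing over $n\in\N$ produces the double sums $(x_1x_2)^{-1}\tilde{S}^{(a-1)}_{p_1,p_2;q}$ and $(-1)^{p_1+p_2+q}\tilde{S}^{(-a)}_{p_1,p_2;q}$, together with the single $\tilde{S}^{(a-1)}_{p_i;q}$, $\tilde{S}^{(-a)}_{p_i;q}$ terms and the pure $\Li$-products on the right. The order-$(q+1)$ residue at the origin I would obtain from $\tfrac{1}{q!}\lim_{s\to0}\tfrac{d^q}{ds^q}(s^{q+1}G)$; since $\zeta_0=0$ annihilates the $x_i^{-1}$-harmonic parts, $s^{q+1}G$ reduces to $s\,\Phi(s;x)$ (Lemma~\ref{lem-rui-xu-two} at $n=0$) times the two regular series of Lemma~\ref{lem-extend-xuzhou-one} at $n=0$, and extracting the coefficient of $s^q$ gives exactly the two convolution sums indexed by $k_1+k_2=q$ and $k_1+k_2+k_3=q-1$.

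The step I expect to be the main obstacle is the residue at $s=-n-a$, a pole of order $p_1+p_2$. Here I would set $v=s+n+a$, expand each $\phi^{(p_i-1)}(s+a;x_i)$ about $s+a=-n$ via Lemma~\ref{lem-rui-xu-one} (keeping the principal part $x_i^n/v^{p_i}$), expand $\Phi(s;x)$ via Lemma~\ref{lem-extend-xuzhou-two}, and expand $s^{-q}=(v-n-a)^{-q}$ as a binomial series in $v$. Extracting the coefficient of $v^{-1}$ splits the product of the two $\phi$-expansions into four cross-terms; the decisive simplification is that the \emph{both-regular} cross-term has no pole in $v$ and therefore drops out, leaving only three. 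The doubly-singular term yields the family with $(xx_1x_2)^{-1}\Li_{p_1+p_2+q-k-1}(xx_1x_2;a)$, while each mixed term (one principal part, one regular part) splits again according to the two pieces of the regular factor: its $\Li_{k+p_i}(x_i)$ part becomes a $\Li(\cdot;a)$-product and its $\zeta_n(k+p_i;x_i^{-1})$ part becomes an $R$-sum, the constraint $k_1+k_2\le p_j-1$ dictating the triangular summation ranges. The delicate bookkeeping lies in reindexing to recover the binomial coefficients $\binom{p_j+q-k_1-k_2-2}{q-1}$ and in carrying out the $n$-summation through the identity $(xx_1x_2)^{-1}\Li_p(xx_1x_2;a)=\sum_{n\ge0}(xx_1x_2)^n/(n+a)^p$ (and its $\zeta_n$-analogue, which manufactures the sums $R^{(a)}_{k_2+p_i;\,p_j+q-k_1-k_2-1}(x_i^{-1};xx_1x_2)$). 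Substituting the four residue blocks into $\sum_{n\ge1}\Res(\cdot,n)+\Res(\cdot,0)+\sum_{n\ge1}\Res(\cdot,-n)+\sum_{n\ge0}\Res(\cdot,-n-a)=0$ and solving for the two double $\tilde{S}$-sums then yields the asserted identity.
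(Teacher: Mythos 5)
Your proposal is correct and follows essentially the same route as the paper's own proof: the identical contour integral $\oint G^{(a)}_{p_1p_2,q}(x,x_1,x_2;s)\,ds=0$, the same classification of poles (simple at nonzero integers, order $q+1$ at the origin, order $p_1+p_2$ at $-n-a$), the same use of Lemmas \ref{lem-rui-xu-one}--\ref{lem-extend-xuzhou-two}, and the same final assembly via Lemma \ref{lem-redisue-thm}. Your residue computations (including the vanishing of the both-regular cross-term at $-n-a$, the emergence of the $R$-sums from the $\zeta_n(k;x_i^{-1})$ pieces, and the convolution ranges $k_1+k_2=q$, $k_1+k_2+k_3=q-1$ at the origin) all match the paper's.
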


\begin{proof}
The proof of this theorem is based on residue calculations of the following contour integral:
\[\oint\limits_{\left( \infty  \right)}G^{(a)}_{p_1p_2,q}(x,x_1,x_2;s)ds:= \oint\limits_{\left( \infty  \right)}\frac{\Phi(s;x)\phi^{(p_1-1)}(s+a;x_1)\phi^{(p_2-1)}(s+a;x_2)}{(p_1-1)!(p_2-1)!s^q} (-1)^{p_1+p_2}ds=0.\]
It is evident that the integrand $G^{(a)}_{p_1p_2,q}(x,x_1,x_2;s)$ possesses the following poles in the complex plane: 1. $n(n\in\mathbb{N})$ is a simple pole; 2. $s=0$ is a pole of order $q+1$; 3. $s=-n-a(n\in\mathbb{N}_0)$  is a pole of order $p_1+p_2$. Applying Lemmas \ref{lem-rui-xu-two} and \ref{lem-extend-xuzhou-one}, we can compute the residues at simple poles located at integer points as follows:
\begin{align*}
	&\Res\left(G^{(a)}_{p_1p_2,q}(\cdot;s),n\right)=\frac{x^{-n}(x_1x_2)^{-n-1}}{n^q}\left(\Li_{p_1}(x_1;a)-\zeta_n(p_1;x_1;a-1)\right)
	\\&\qquad\qquad\qquad\qquad\qquad\qquad\qquad\times\left(\Li_{p_2}(x_2;a)-\zeta_n(p_2;x_2;a-1)\right)\quad (n\in \N),\\
	&\Res\left(G^{(a)}_{p_1p_2,q}(\cdot;s),-n\right)=(-1)^q \frac{(xx_1x_2)^n}{n^q}\left(\Li_{p_1}(x_1;a)x_1^{-1}+(-1)^{p_1}\zeta_n\Big(p_1;x_1^{-1};-a\Big)\right)\\&\qquad\qquad\qquad\qquad\qquad\qquad\qquad
	\times\left(\Li_{p_2}(x_2;a)x_2^{-1}+(-1)^{p_2}\zeta_n\Big(p_2;x_2^{-1};-a\Big)\right)\quad (n\in \N).
\end{align*}
Applying Lemmas \ref{lem-rui-xu-one} and \ref{lem-extend-xuzhou-two}, after extensive calculations, the residues at the poles located at $0$ and $-n - a\ (n\in \N_0)$ can be obtained as follows:
\begin{align*}
	&\Res\left(G^{(a)}_{p_1p_2,q}(\cdot;s),0\right)\\
	&=\sum_{k_1+k_2=q,\atop k_1,k_2\geq 0}(x_1x_2)^{-1} (-1)^q \binom{k_1+p_1-1}{p_1-1}\binom{k_2+p_2-1}{p_2-1}\Li_{k_1+p_1}(x_1;a)\Li_{k_2+p_2}(x_2;a)\\
	&\quad+\sum_{k_1+k_2+k_3=q-1,\atop k_1,k_2,k_3\geq 0}(-1)^{k_2+k_3}\binom{k_2+p_1-1}{p_1-1}\binom{k_3+p_2-1}{p_2-1}\Li_{k_2+p_1}(x_1;a)\Li_{k_3+p_2}(x_2;a)\nonumber\\&\qquad\qquad\qquad\qquad\qquad\qquad\times(x_1x_2)^{-1} \left((-1)^{k_1}\Li_{{k_1}+1}(x)- \Li_{{k_1}+1}\Big(x^{-1}\Big)\right)
\end{align*}
and for $n\in\N_0$,
\begin{align*}
	&\Res\left(G^{(a)}_{p_1p_2,q}(\cdot;s),-n-a\right)\\
	&=(-1)^q\sum_{k=0}^{p_1+p_2-1}\binom{p_1+p_2+q-k-2}{q-1}\left((-1)^k\Li_{k+1}(x;1-a)-x \Li_{k+1}\Big(x^{-1};a\Big)\right)\\&\qquad\qquad\qquad\qquad\qquad\times \frac{(xx_1x_2)^n}{(n+a)^{p_1+p_2+q-k-1}}\\
	&\quad+(-1)^q \sum_{k_1+k_2\leq p_2-1,\atop k_1,k_2\geq 0} \binom{k_2+p_1-1}{p_1-1}\binom{p_2+q-k_1-k_2-2}{q-1}\frac{(xx_1x_2)^n}{(n+a)^{p_2+q-k_1-k_2-1}} \nonumber\\&\quad\times\left((-1)^{k_1}\Li_{k_1+1}(x;1-a)-x \Li_{k_1+1}\Big(x^{-1};a\Big)\right)\left((-1)^{k_2}\Li_{k_2+p_1}(x_1)+(-1)^{p_1}\zeta_n\Big(k_2+p_1;x_1^{-1}\Big)\right)\\
	&\quad+(-1)^q \sum_{k_1+k_2\leq p_1-1,\atop k_1,k_2\geq 0} \binom{k_2+p_2-1}{p_2-1}\binom{p_1+q-k_1-k_2-2}{q-1}\frac{(xx_1x_2)^n}{(n+a)^{p_1+q-k_1-k_2-1}} \nonumber\\&\quad\times\left((-1)^{k_1}\Li_{k_1+1}(x;1-a)-x \Li_{k_1+1}\Big(x^{-1};a\Big)\right)\left((-1)^{k_2}\Li_{k_2+p_2}(x_2)+(-1)^{p_2}\zeta_n\Big(k_2+p_2;x_2^{-1}\Big)\right).
\end{align*}
By Lemma \ref{lem-redisue-thm}, we have
\begin{align*}
	&\sum_{n=1}^\infty \Res\left(G^{(a)}_{p_1p_2,q}(\cdot;s),n\right)+\sum_{n=1}^\infty \Res\left(G^{(a)}_{p_1p_2,q}(\cdot;s),-n\right)\\&\quad+\sum_{n=0}^\infty\Res\left(G^{(a)}_{p_1p_2,q}(\cdot;s),-n-a\right)+\Res\left(G^{(a)}_{p_1p_2,q}(\cdot;s),0\right)=0.
\end{align*}
Substituting the four residue results obtained above consequently proves Theorem \ref{thm-quadratic-CES-two}.
\end{proof}

\begin{exa}Setting $(p_1,p_2,q)=(1,1,2)$ in Theorem \ref{thm-quadratic-CES-two}, we have
\begin{align*}
	&(x_1x_2)^{-1}\tilde{S}^{(a-1)}_{1,1;2}\Big(x_1,x_2;(xx_1x_2)^{-1}\Big)+\tilde{S}^{(-a)}_{1,1;2}\Big(x_1^{-1},x_2^{-1};xx_1x_2\Big)\\
	&=-(x_1x_2)^{-1}\Li_{1}(x_1;a)\Li_{1}(x_2;a)\Li_2\Big((xx_1x_2)^{-1}\Big)+(x_1x_2)^{-1}\Li_{1}(x_1;a)\tilde{S}^{(a-1)}_{1;2}\Big(x_2;(xx_1x_2)^{-1}\Big)\nonumber\\
	&\quad+(x_1x_2)^{-1}\Li_{1}(x_2;a)\tilde{S}^{(a-1)}_{1;2}\Big(x_1;(xx_1x_2)^{-1}\Big)-(x_1x_2)^{-1}\Li_{1}(x_1;a)\Li_{1}(x_2;a)\Li_2(xx_1x_2)\\
	&\quad+x_1^{-1}\Li_{1}(x_1;a)\tilde{S}^{(-a)}_{1;2}\Big(x_2^{-1};xx_1x_2\Big)+x_2^{-1}\Li_{1}(x_2;a)\tilde{S}^{(-a)}_{1;2}\Big(x_1^{-1};xx_1x_2\Big)\\
	&\quad-2(xx_1x_2)^{-1}\left(\Li_{1}(x;1-a)-x \Li_{1}\Big(x^{-1};a\Big)\right)\Li_{3}(xx_1x_2;a)\nonumber\\
	&\quad+(xx_1x_2)^{-1}\left(\Li_{2}(x;1-a)+x \Li_{2}\Big(x^{-1};a\Big)\right)\Li_{2}(xx_1x_2;a)\\
	&\quad-(xx_1x_2)^{-1}\left(\Li_{1}(x;1-a)-x \Li_{1}\Big(x^{-1};a\Big)\right)\Li_{1}(x_1)\Li_{2}(xx_1x_2;a)\nonumber\\
	&\quad+\left(\Li_{1}(x;1-a)-x\Li_{1}\Big(x^{-1};a\Big)\right)R^{(a)}_{1;2}\Big(x_1^{-1};xx_1x_2\Big)\nonumber\\
	&\quad-(xx_1x_2)^{-1}\left(\Li_{1}(x;1-a)-x \Li_{1}\Big(x^{-1};a\Big)\right)\Li_{1}(x_2)\Li_{2}(xx_1x_2;a)\nonumber\\
	&\quad+\left(\Li_{1}(x;1-a)-x\Li_{1}\Big(x^{-1};a\Big)\right)R^{(a)}_{1;2}\Big(x_2^{-1};xx_1x_2\Big)\nonumber\\
	&\quad-(x_1x_2)^{-1}\Li_{3}(x_1;a)\Li_{1}(x_2;a)-(x_1x_2)^{-1}\Li_{1}(x_1;a)\Li_{3}(x_2;a)\\
	&\quad-(x_1x_2)^{-1}\Li_{2}(x_1;a)\Li_{2}(x_2;a)\\
	&\quad+(x_1x_2)^{-1}\Li_{1}(x_1;a)\Li_{1}(x_2;a)\left(\Li_{2}(x)+ \Li_{2}\Big(x^{-1}\Big)\right)\\
	&\quad+(x_1x_2)^{-1}\Li_{2}(x_1;a)\Li_{1}(x_2;a)\left(\Li_{1}(x)-\Li_{1}\Big(x^{-1}\Big)\right)\\
	&\quad+(x_1x_2)^{-1}\Li_{1}(x_1;a)\Li_{2}(x_2;a)\left(\Li_{1}(x)-\Li_{1}\Big(x^{-1}\Big)\right).
\end{align*}
Setting $(p_1,p_2,q)=(1,2,2)$ in Theorem \ref{thm-quadratic-CES-two}, we have
\begin{align*}
	&(x_1x_2)^{-1}\tilde{S}^{(a-1)}_{1,2;2}\Big(x_1,x_2;(xx_1x_2)^{-1}\Big)-\tilde{S}^{(-a)}_{1,2;2}\Big(x_1^{-1},x_2^{-1};xx_1x_2\Big)\\
	&=-(x_1x_2)^{-1}\Li_{1}(x_1;a)\Li_{2}(x_2;a)\Li_2\Big((xx_1x_2)^{-1}\Big)+(x_1x_2)^{-1}\Li_{1}(x_1;a)\tilde{S}^{(a-1)}_{2;2}\Big(x_2;(xx_1x_2)^{-1}\Big)\nonumber\\
	&\quad+(x_1x_2)^{-1}\Li_{2}(x_2;a)\tilde{S}^{(a-1)}_{1;2}\Big(x_1;(xx_1x_2)^{-1}\Big)-(x_1x_2)^{-1}\Li_{1}(x_1;a)\Li_{2}(x_2;a)\Li_2(xx_1x_2)\nonumber\\
	&\quad-x_1^{-1}\Li_{1}(x_1;a)\tilde{S}^{(-a)}_{2;2}\Big(x_2^{-1};xx_1x_2\Big)+x_2^{-1}\Li_{2}(x_2;a)\tilde{S}^{(-a)}_{1;2}\Big(x_1^{-1};xx_1x_2\Big)\nonumber\\
	&\quad-3\left(\Li_{1}(x;1-a)-x\Li_{1}\Big(x^{-1};a\Big)\right)(xx_1x_2)^{-1}\Li_{4}(xx_1x_2;a)\nonumber\\
	&\quad+2\left(\Li_{2}(x;1-a)+x\Li_{2}\Big(x^{-1};a\Big)\right)(xx_1x_2)^{-1}\Li_{3}(xx_1x_2;a)\nonumber\\
	&\quad-\left(\Li_{3}(x;1-a)-x\Li_{3}\Big(x^{-1};a\Big)\right)(xx_1x_2)^{-1}\Li_2(xx_1x_2;a)\nonumber\\
	&\quad-2\left(\Li_{1}(x;1-a)-x\Li_{1}\Big(x^{-1};a\Big)\right)\left((xx_1x_2)^{-1}\Li_{1}(x_1)\Li_{3}(xx_1x_2;a)-R^{(a)}_{1;3}\Big(x_1^{-1};xx_1x_2\Big)\right)\nonumber\\
	&\quad+\left(\Li_{2}(x;1-a)+x\Li_{2}\Big(x^{-1};a\Big)\right)\left((xx_1x_2)^{-1}\Li_{1}(x_1)\Li_{2}(xx_1x_2;a)-R^{(a)}_{1;2}\Big(x_1^{-1};xx_1x_2\Big)\right)\nonumber\\
	&\quad+\left(\Li_{1}(x;1-a)-x\Li_{1}\Big(x^{-1};a\Big)\right)\left((xx_1x_2)^{-1}\Li_{2}(x_1)\Li_{2}(xx_1x_2;a)+R^{(a)}_{2;2}\Big(x_1^{-1};xx_1x_2\Big)\right)\nonumber\\
	&\quad-\left(\Li_{1}(x;1-a)-x\Li_{1}\Big(x^{-1};a\Big)\right)\left((xx_1x_2)^{-1}\Li_{2}(x_2)\Li_{2}(xx_1x_2;a)+R^{(a)}_{2;2}\Big(x_2^{-1};xx_1x_2\Big)\right)\nonumber\\
	&\quad-(x_1x_2)^{-1}\Li_{3}(x_1;a)\Li_{2}(x_2;a)-3(x_1x_2)^{-1}\Li_{1}(x_1;a)\Li_{4}(x_2;a)\\
	&\quad-2(x_1x_2)^{-1}\Li_{2}(x_1;a)\Li_{3}(x_2;a)\\
	&\quad+(x_1x_2)^{-1}\Li_{1}(x_1;a)\Li_{2}(x_2;a)\left(\Li_{2}(x)+ \Li_{2}\Big(x^{-1}\Big)\right)\\
	&\quad+(x_1x_2)^{-1}\Li_{2}(x_1;a)\Li_{2}(x_2;a)\left(\Li_{1}(x)-\Li_{1}\Big(x^{-1}\Big)\right)\\
	&\quad+2(x_1x_2)^{-1}\Li_{1}(x_1;a)\Li_{3}(x_2;a)\left(\Li_{1}(x)-\Li_{1}\Big(x^{-1}\Big)\right).
\end{align*}
\end{exa}

\begin{thm}\label{thm-quadratic-CES-three} Let $x,x_1,x_2$ be roots of unity, $a\in \mathbb{C}\setminus \Z$, and $p_1,p_2,q\in\N$ with $(p_1,x_1), (p_2,x_2) $ and $ (q,xx_1x_2)\neq (1,1)$. We have
	\begin{align}
	&(-1)^{p_1+p_2+q+1}R_{p_1,p_2;q}^{(-a)}\Big(x_1^{-1},x_2^{-1};xx_1x_2\Big)-(xx_1x_2)^{-1}R^{(a+1)}_{p_1,p_2;q}\Big(x_1,x_2;(xx_1x_2)^{-1}\Big)\nonumber\\
	&=\Li_{p_1}(x_1)\Li_{p_2}(x_2)\Li_q\Big((xx_1x_2)^{-1};a+1\Big)-(xx_1x_2)^{-1}\Li_{p_1}(x_1)R^{(a+1)}_{p_2;q}\Big(x_2;(xx_1x_2)^{-1}\Big)\nonumber\\
	&\quad-(xx_1x_2)^{-1}\Li_{p_2}(x_2)R^{(a+1)}_{p_1;q}\Big(x_1;(xx_1x_2)^{-1}\Big)\nonumber\\
	&\quad+(-1)^q(xx_1x_2)^{-1}\binom{q+p_1+p_2-1}{q-1}\Li_{q+p_1+p_2}(xx_1x_2;-a)\nonumber\\
	&\quad+(-1)^q\sum_{k=0}^{p_{1}}\binom{k+p_2-1}{p_{2}-1}\binom{q+p_{1}-k-1}{q-1}\nonumber\\
	&\quad\quad\times\left((-1)^k(xx_1x_2)^{-1}\Li_{k+1}(x_{2})\Li_{p_1+q-k}(xx_1x_{2};-a)+(-1)^{p_{2}}R_{k+p_2;p_1+q-k}^{(-a)}\Big(x_2^{-1};xx_1x_2\Big)\right)\nonumber\\
	&\quad+(-1)^q\sum_{k=0}^{p_{2}}\binom{k+p_1-1}{p_{1}-1}\binom{q+p_{2}-k-1}{q-1}\nonumber\\
	&\quad\quad\times\left((-1)^k(xx_1x_2)^{-1}\Li_{k+1}(x_{1})\Li_{p_2+q-k}(xx_1x_{2};-a)+(-1)^{p_{1}}R_{k+p_1;p_2+q-k}^{(-a)}\Big(x_1^{-1};xx_1x_2\Big)\right)\nonumber\\
	&\quad+(-1)^q(xx_1x_2)^{-1}\sum_{k=0}^{p_1+p_2-1}\binom{q+p_1+p_2-k-2}{q-1}\left((-1)^k\Li_{k+1}(x)-\Li_{k+1}\Big(x^{-1}\Big)\right)\nonumber\\
	&\quad\quad\times\Li_{q+p_1+p_2-k-1}(xx_1x_2;-a)\nonumber\\
	&\quad+(-1)^q(xx_1x_2)^{-1}\Li_{p_1}(x_1)\Li_{p_2}(x_2)\Li_q(xx_1x_2;-a)+(-1)^{q+p_2}\Li_{p_1}(x_1)R_{p_2;q}^{(-a)}\Big(x_2^{-1};xx_1x_2\Big)\nonumber\\
	&\quad+(-1)^{q+p_1}\Li_{p_2}(x_2)R_{p_1;q}^{(-a)}\Big(x_1^{-1};xx_1x_2\Big)\nonumber\\
	&\quad+(-1)^q\sum_{0\le k_1+k_2\le p_{1}-1}\binom{k_2+p_{2}-1}{p_{2}-1}\binom{q+p_{1}-k_1-k_2-2}{q-1}\nonumber\\
	&\quad\quad\times\left((-1)^{k_1}\Li_{k_1+1}(x)-\Li_{k_1+1}\Big(x^{-1}\Big)\right)\begin{Bmatrix}
		(-1)^{k_2}(xx_1x_2)^{-1}\Li_{k_2+p_2}(x_{2})\\
		\times\Li_{p_{1}+q-k_1-k_2-1}(xx_1x_{2};-a)\\+(-1)^{p_{2}}R^{(-a)}_{k_2+p_2;p_{1}+q-k_1-k_2-1}\Big(x_{2}^{-1};xx_1x_2\Big)
	\end{Bmatrix}\nonumber\\
	&\quad+(-1)^q\sum_{0\le k_1+k_2\le p_{2}-1}\binom{k_2+p_{1}-1}{p_{1}-1}\binom{q+p_{2}-k_1-k_2-2}{q-1}\nonumber\\
	&\quad\quad\times\left((-1)^{k_1}\Li_{k_1+1}(x)-\Li_{k_1+1}\Big(x^{-1}\Big)\right)\begin{Bmatrix}
		(-1)^{k_2}(xx_1x_2)^{-1}\Li_{k_2+p_1}(x_{1})\\
		\times\Li_{p_{2}+q-k_1-k_2-1}(xx_1x_{2};-a)\\+(-1)^{p_{1}}R^{(-a)}_{k_2+p_1;p_{2}+q-k_1-k_2-1}\Big(x_{1}^{-1};xx_1x_2\Big)
	\end{Bmatrix}\nonumber\\
	&\quad+\sum_{k_1+k_2+k_3=q-1,\atop k_1,k_2,k_3\geq 0}\left((-1)^{k_1}\Li_{{k_1}+1}(x;1-a)-x \Li_{{k_1}+1}\Big(x^{-1};a\Big) \right)\nonumber\\
	&\quad\quad\times(-1)^{k_2+k_3}\binom{k_2+p_1-1}{p_1-1}\binom{k_3+p_2-1}{p_2-1}\Li_{k_2+p_1}(x_1;-a)\Li_{k_3+p_2}(x_2;-a)(x_1x_2)^{-1}.
	\end{align}
\end{thm}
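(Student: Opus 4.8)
The plan is to adapt the residue method of Theorems~\ref{thm-linearCES-3} and \ref{thm-quadratic-CES-two} to the contour integral
\begin{align*}
\oint\limits_{(\infty)} H^{(a)}_{p_1p_2,q}(x,x_1,x_2;s)\,ds:=\oint\limits_{(\infty)}\frac{\Phi(s;x)\phi^{(p_1-1)}(s;x_1)\phi^{(p_2-1)}(s;x_2)}{(p_1-1)!(p_2-1)!(s+a)^q}(-1)^{p_1+p_2}\,ds=0,
\end{align*}
the two-digamma analogue of the kernel $H^{(a)}_{p,q}$ used in Theorem~\ref{thm-linearCES-3}. First I would classify the poles: since $\Phi(s;x)$ has a simple pole at every integer, each $\phi^{(p_i-1)}(s;x_i)$ has a pole of order $p_i$ only at the non-positive integers, and $(s+a)^{-q}$ is singular only at $s=-a$, the integrand has simple poles at the positive integers, poles of order $p_1+p_2+1$ at the non-positive integers, and a pole of order $q$ at $s=-a$ (disjoint from the integers because $a\notin\Z$).

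Next I would handle the two routine residue families. At a positive integer $n$ only $\Phi$ is singular, so Lemmas~\ref{lem-rui-xu-one} and \ref{lem-rui-xu-two} give
\[
\Res\left(H^{(a)}_{p_1p_2,q}(\cdot;s),n\right)=\frac{(xx_1x_2)^{-n}}{(n+a)^q}\bigl(\Li_{p_1}(x_1)-\zeta_{n-1}(p_1;x_1)\bigr)\bigl(\Li_{p_2}(x_2)-\zeta_{n-1}(p_2;x_2)\bigr);
\]
expanding the product yields four sums, of which the $\Li_{p_1}(x_1)\Li_{p_2}(x_2)$ piece is directly $\Li_{p_1}(x_1)\Li_{p_2}(x_2)\Li_q((xx_1x_2)^{-1};a+1)$, while the three sums carrying a factor $\zeta_{n-1}$ reindex via $n\mapsto n+1$ (using $\zeta_0=0$) into the depth-one sum $-(xx_1x_2)^{-1}\Li_{p_1}(x_1)R^{(a+1)}_{p_2;q}(x_2;(xx_1x_2)^{-1})$, its $(1\leftrightarrow 2)$ mirror, and the depth-two sum $(xx_1x_2)^{-1}R^{(a+1)}_{p_1,p_2;q}(x_1,x_2;(xx_1x_2)^{-1})$ that will be moved to the left. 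At $s=-a$ all three digamma-type factors are holomorphic, so by Lemmas~\ref{lem-extend-xuzhou-two} (with $n=0$) and \ref{lem-extend-xuzhou-three} the residue is the coefficient of $(s+a)^{q-1}$ in a product of three Taylor series, i.e.\ the Cauchy convolution over $k_1+k_2+k_3=q-1$ that produces exactly the final triple-sum term of the statement.

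The main obstacle is the residue at a non-positive integer $-n$, a pole of order $p_1+p_2+1$. Here I would substitute into the fourfold product the Laurent expansions from Lemma~\ref{lem-rui-xu-one} for each $\phi^{(p_i-1)}(s;x_i)$ (a principal part $x_i^n(s+n)^{-p_i}$ plus a Taylor tail whose coefficients carry $\Li_{k+p_i}(x_i)$ and $\zeta_n(k+p_i;x_i^{-1})$), the expansion from Lemma~\ref{lem-rui-xu-two} for $\Phi(s;x)$ (a simple pole plus a Taylor tail with coefficients $(-1)^m\Li_{m+1}(x)-\Li_{m+1}(x^{-1})$), and the binomial expansion of $(s+a)^{-q}$ about $s=-n$, and then extract the coefficient of $(s+n)^{-1}$. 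Organising this convolution is the delicate step: one must split according to which of the four factors supplies its singular versus its regular part, which is what generates the nested sums constrained by $k_1+k_2\le p_i-1$ together with the diagonal sum $k_1+k_2+k_3=q-1$. The common prefactor $(-1)^q(xx_1x_2)^n$ and the denominators $(n-a)^{\bullet}$ reassemble, after summation over $n$, into the Hurwitz factors $\Li_{\bullet}(xx_1x_2;-a)$ and the sums $R^{(-a)}_{\bullet;\bullet}(x_i^{-1};xx_1x_2)$; in particular the depth-two sum $R^{(-a)}_{p_1,p_2;q}(x_1^{-1},x_2^{-1};xx_1x_2)$ arises from the single corner term in which $\Phi$ is singular and both Taylor tails are taken at index $0$, pairing $\zeta_n(p_1;x_1^{-1})\zeta_n(p_2;x_2^{-1})$ with $(n-a)^{-q}$.

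Finally I would collect the four contributions and apply Cauchy's residue theorem (Lemma~\ref{lem-redisue-thm}) in the form
\[
\sum_{n=1}^\infty\Res\left(H^{(a)}_{p_1p_2,q}(\cdot;s),n\right)+\sum_{n=0}^\infty\Res\left(H^{(a)}_{p_1p_2,q}(\cdot;s),-n\right)+\Res\left(H^{(a)}_{p_1p_2,q}(\cdot;s),-a\right)=0,
\]
move the depth-two $R^{(a+1)}$ and $R^{(-a)}$ terms to the left, and match the remaining pieces against the right-hand side. The identity $(-1)^m\Li_{m+1}(x;1-a)-x\Li_{m+1}(x^{-1};a)=(-1)^m\Li_{m+1}(x^{-1};a+1)-x^{-1}\Li_{m+1}(x;-a)$ recorded in the proof of Theorem~\ref{thm-linearCES-3} is what reconciles the $\Phi$-coefficients produced at $s=-a$ with those produced at the integers $-n$, yielding the uniform entries $(-1)^{k_1}\Li_{k_1+1}(x)-\Li_{k_1+1}(x^{-1})$; once this bookkeeping is complete the stated formula follows by a routine rearrangement.
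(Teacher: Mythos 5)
Your proposal is correct and follows essentially the same route as the paper: the identical kernel $H^{(a)}_{p_1p_2,q}(x,x_1,x_2;s)$, the same pole classification (simple at positive integers, order $p_1+p_2+1$ at non-positive integers, order $q$ at $s=-a$), the same residue bookkeeping, and the same final appeal to Lemma \ref{lem-redisue-thm}; in fact your residue-theorem display is cleaner than the paper's, whose final summation formula carries a vacuous sum over the points $-n-a$ $(n\geq 1)$, which are not poles of this integrand. Two cosmetic slips do not affect validity: the diagonal convolution $k_1+k_2+k_3=q-1$ arises only from the pole at $s=-a$, not from the poles at the non-positive integers (there the third index is forced to equal $p_i-1-k_1-k_2$); and your closing invocation of the identity $(-1)^m\Li_{m+1}(x;1-a)-x\Li_{m+1}\big(x^{-1};a\big)=(-1)^m\Li_{m+1}\big(x^{-1};a+1\big)-x^{-1}\Li_{m+1}(x;-a)$ is unnecessary and misdescribed here, since the parameter-free entries $(-1)^{k}\Li_{k+1}(x)-\Li_{k+1}\big(x^{-1}\big)$ come directly from Lemma \ref{lem-rui-xu-two} at the integer poles, while the theorem's final triple sum retains the Hurwitz-parameter entries exactly as produced at $s=-a$ (that identity is only needed in the proof of Theorem \ref{thm-linearCES-3}), so the assembly is a direct rearrangement with no reconciliation step.
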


\begin{proof}
The proof of this theorem is based on residue calculations of the following contour integral:
\[\oint\limits_{\left( \infty  \right)}H^{(a)}_{p_1p_2,q}(x,x_1,x_2;s)ds:= \oint\limits_{\left( \infty  \right)}\frac{\Phi(s;x)\phi^{(p_1-1)}(s;x_1)\phi^{(p_2-1)}(s;x_2)}{(p_1-1)!(p_2-1)!(s+a)^q} (-1)^{p_1+p_2}ds=0.\]
It is evident that the integrand $H^{(a)}_{p_1p_2,q}(x,x_1,x_2;s)$ possesses the following poles in the complex plane: 1. all postive integers are simple poles; 2. all non-postive integers are poles of order $p_1+p_2+1$; 3. $s=-a$ is a pole of order $q$. Applying Lemmas \ref{lem-rui-xu-two} and \ref{lem-extend-xuzhou-one}, we can compute the residues at simple poles located at integer points as follows:
\begin{align*}
	&\Res\left(H^{(a)}_{p_1p_2,q}(x,x_1,x_2;s),n\right)\\
	&=\frac{(xx_1x_2)^{-n}}{(n+a)^q}\left(\Li_{p_1}(x_1)-\zeta_{n-1}(p_1;x_1)\right)\left(\Li_{p_2}(x_2)-\zeta_{n-1}(p_2;x_2)\right)\quad (n\in \N),\\
	&\Res\left(H^{(a)}_{p_1p_2,q}(x,x_1,x_2;s),-n\right)\\
	&=\frac1{(p_1+p_2)!}\lim_{s\rightarrow -n} \frac{d^{p_1+p_2}}{ds^{p_1+p_2}}\left\{(s+n)^{p_1+p_2+1}H^{(a)}_{p_1p_2,q}(x,x_1,x_2;s)\right\}\quad (n\in\N_0)\\
	&=(-1)^q\binom{q+p_1+p_2-1}{q-1}\frac{(xx_1x_2)^n}{(n-a)^{q+p_1+p_2}}\\
	&\quad+(-1)^q\sum_{k=0}^{p_{1}}\binom{k+p_2-1}{p_{2}-1}\binom{q+p_{1}-k-1}{q-1}\\
	&\qquad\qquad\times\left((-1)^k\Li_{k+p_2}(x_{2})+(-1)^{p_{2}}\zeta_n\Big(k+p_{2};x_{2}^{-1}\Big)\right)\frac{(xx_1x_2)^n}{(n-a)^{p_{1}+q-k}}\\
	&\quad+(-1)^q\sum_{k=0}^{p_{2}}\binom{k+p_{1}-1}{p_{1}-1}\binom{q+p_{2}-k-1}{q-1}\\
	&\qquad\qquad\times\left((-1)^k\Li_{k+p_1}(x_{1})+(-1)^{p_{1}}\zeta_n\Big(k+p_{1};x_{1}^{-1}\Big)\right)\frac{(xx_1x_2)^n}{(n-a)^{p_{2}+q-k}}\\
	&\quad+(-1)^q\sum_{k=0}^{p_1+p_2-1}\binom{q+p_1+p_2-k-2}{q-1}\left((-1)^k\Li_{k+1}(x)-\Li_{k+1}\Big(x^{-1}\Big)\right)\frac{(xx_1x_2)^n}{(n-a)^{q+p_1+p_2-k-1}}\\
	&\quad+(-1)^q\left(\Li_{p_1}(x_1)+(-1)^{p_1}\zeta_n\Big(p_1;x_1^{-1}\Big)\right)\left(\Li_{p_2}(x_2)+(-1)^{p_2}\zeta_n\Big(p_2;x_2^{-1}\Big)\right)\frac{(xx_1x_2)^n}{(n-a)^{q}}\\
	&\quad+(-1)^q\sum_{0\le k_1+k_2\le p_{1}-1}\binom{k_2+p_{2}-1}{p_{2}-1}\binom{q+p_{1}-k_1-k_2-2}{q-1}\nonumber\\
	&\quad\times\left((-1)^{k_1}\Li_{k_1+1}(x)-\Li_{k_1+1}\Big(x^{-1}\Big)\right)\nonumber\\
	&\quad\times
	\left((-1)^{k_2}\Li_{k_2+p_2}(x_{2})+(-1)^{p_{2}}\zeta_n\Big(k_2+p_{2};x_{2}^{-1}\Big)\right)\frac{(xx_1x_2)^n}{(n-a)^{p_{1}+q-k_1-k_2-1}}\\
	&\quad+(-1)^q\sum_{0\le k_1+k_2\le p_{2}-1}\binom{k_2+p_{1}-1}{p_{1}-1}\binom{q+p_{2}-k_1-k_2-2}{q-1}\nonumber\\
	&\quad\times\left((-1)^{k_1}\Li_{k_1+1}(x)-\Li_{k_1+1}\Big(x^{-1}\Big)\right)\nonumber\\
	&\quad\times
	\left((-1)^{k_2}\Li_{k_2+p_1}(x_{1})+(-1)^{p_{1}}\zeta_n\Big(k_2+p_{1};x_{1}^{-1}\Big)\right)\frac{(xx_1x_2)^n}{(n-a)^{p_{2}+q-k_1-k_2-1}}
\end{align*}
and
\begin{align*}
	&\Res\left(H^{(a)}_{p_1p_2,q}(x,x_1,x_2;s),-a\right)=\sum_{k_1+k_2+k_3=q-1,\atop k_1,k_2,k_3\geq 0}\left((-1)^{k_1}\Li_{{k_1}+1}(x;1-a)-x \Li_{{k_1}+1}\Big(x^{-1};a\Big) \right)\\
	&\qquad\qquad\qquad\times(-1)^{k_2+k_3}\binom{k_2+p_1-1}{p_1-1}\binom{k_3+p_2-1}{p_2-1}\Li_{k_2+p_1}(x_1;-a)\Li_{k_3+p_2}(x_2;-a)(x_1x_2)^{-1}.
\end{align*}
By Lemma \ref{lem-redisue-thm}, we have
\begin{align*}
	&\sum_{n=1}^\infty \Res\left(H^{(a)}_{p_1p_2,q}(\cdot;s),n\right)+\sum_{n=0}^\infty \Res\left(H^{(a)}_{p_1p_2,q}(\cdot;s),-n\right)\\&\quad+\sum_{n=1}^\infty\Res\left(H^{(a)}_{p_1p_2,q}(\cdot;s),-n-a\right)+\Res\left(H^{(a)}_{p_1p_2,q}(\cdot;s),-a\right)=0.
\end{align*}
Substituting the four residue results obtained above consequently proves Theorem \ref{thm-quadratic-CES-three}.
\end{proof}

\begin{exa}Setting $(p_1,p_2,q)=(1,1,2)$ in Theorem \ref{thm-quadratic-CES-three}, we have
\begin{align*}
	&(xx_1x_2)^{-1}R^{(a+1)}_{1,1;2}\Big(x_1,x_2;(xx_1x_2)^{-1}\Big)+R_{1,1;2}^{(-a)}\Big(x_1^{-1},x_2^{-1};xx_1x_2\Big)\\
	&=-\Li_{1}(x_1)\Li_{1}(x_2)\Li_2\Big((xx_1x_2)^{-1};a+1\Big)+(xx_1x_2)^{-1}\Li_{1}(x_1)R^{(a+1)}_{1;2}\Big(x_2;(xx_1x_2)^{-1}\Big)\nonumber\\
	&+(xx_1x_2)^{-1}\Li_{1}(x_2)R^{(a+1)}_{1;2}\Big(x_1;(xx_1x_2)^{-1}\Big)-3(xx_1x_2)^{-1}\Li_{4}(xx_1x_2;-a)\nonumber\\
	&-2\left((xx_1x_2)^{-1}\Li_{1}(x_{2})\Li_{3}(xx_1x_{2};-a)-R_{1;3}^{(-a)}\Big(x_2^{-1};xx_1x_2\Big)\right)\\
	&+\left((xx_1x_2)^{-1}\Li_{2}(x_{2})\Li_{2}(xx_1x_{2};-a)+R_{2;2}^{(-a)}\Big(x_2^{-1};xx_1x_2\Big)\right)\nonumber\\
	&-2\left((xx_1x_2)^{-1}\Li_{1}(x_{1})\Li_{3}(xx_1x_{2};-a)-R_{1;3}^{(-a)}\Big(x_1^{-1};xx_1x_2\Big)\right)\\
	&+\left((xx_1x_2)^{-1}\Li_{2}(x_{1})\Li_{2}(xx_1x_{2};-a)+R_{2;2}^{(-a)}\Big(x_1^{-1};xx_1x_2\Big)\right)\\
	&-2(xx_1x_2)^{-1}\left(\Li_{1}(x)-\Li_{1}\Big(x^{-1}\Big)\right)\Li_{3}(xx_1x_2;-a)\\
	&+(xx_1x_2)^{-1}\left(\Li_{2}(x)+\Li_{2}\Big(x^{-1}\Big)\right)\Li_{2}(xx_1x_2;-a)\nonumber\\
	&-(xx_1x_2)^{-1}\Li_1(x_1)\Li_1(x_2)\Li_2(xx_1x_2;-a)+\Li_1(x_1)R_{1;2}^{(-a)}\Big(x_2^{-1};xx_1x_2\Big)\nonumber\\
	&+\Li_1(x_2)R_{1;2}^{(-a)}\Big(x_1^{-1};xx_1x_2\Big)\nonumber\\
	&-\left(\Li_{1}(x)-\Li_{1}\Big(x^{-1}\Big)\right)\left((xx_1x_2)^{-1}\Li_{1}(x_{2})\Li_{2}(xx_1x_{2};-a)-R^{(-a)}_{1;2}\Big(x_{2}^{-1};xx_1x_2\Big)\right)\\
	&-\left(\Li_{1}(x)-\Li_{1}\Big(x^{-1}\Big)\right)\left((xx_1x_2)^{-1}\Li_{1}(x_{1})\Li_{2}(xx_1x_{2};-a)-R^{(-a)}_{1;2}\Big(x_{1}^{-1};xx_1x_2\Big)\right)\nonumber\\
	&+\left(\Li_{2}(x;1-a)+x \Li_{2}\Big(x^{-1};a\Big) \right)\Li_{1}(x_1;-a)\Li_{1}(x_2;-a)(x_1x_2)^{-1}\\
	&+\left(\Li_{1}(x;1-a)-x \Li_{1}\Big(x^{-1};a\Big) \right)\Li_{2}(x_1;-a)\Li_{1}(x_2;-a)(x_1x_2)^{-1}\\
	&+\left(\Li_{1}(x;1-a)-x \Li_{1}\Big(x^{-1};a\Big) \right)\Li_{1}(x_1;-a)\Li_{2}(x_2;-a)(x_1x_2)^{-1}.
\end{align*}
Setting $(p_1,p_2,q)=(2,2,2)$ in Theorem \ref{thm-quadratic-CES-three}, we have
\begin{align*}
	&(xx_1x_2)^{-1}R^{(a+1)}_{2,2;2}\Big(x_1,x_2;(xx_1x_2)^{-1}\Big)+R_{2,2;2}^{(-a)}\Big(x_1^{-1},x_2^{-1};xx_1x_2\Big)\\
	&=-\Li_{2}(x_1)\Li_{2}(x_2)\Li_2\Big((xx_1x_2)^{-1};a+1\Big)+(xx_1x_2)^{-1}\Li_{2}(x_1)R^{(a+1)}_{2;2}\Big(x_2;(xx_1x_2)^{-1}\Big)\nonumber\\
	&\quad+(xx_1x_2)^{-1}\Li_{2}(x_2)R^{(a+1)}_{2;2}\Big(x_1;(xx_1x_2)^{-1}\Big)-5(xx_1x_2)^{-1}\Li_{6}(xx_1x_2;-a)\nonumber\\
	&\quad-3\left((xx_1x_2)^{-1}\Li_{2}(x_{2})\Li_{4}(xx_1x_{2};-a)+R_{2;4}^{(-a)}\Big(x_2^{-1};xx_1x_2\Big)\right)\\
	&\quad+4\left((xx_1x_2)^{-1}\Li_{3}(x_{2})\Li_{3}(xx_1x_{2};-a)-R_{3;3}^{(-a)}\Big(x_2^{-1};xx_1x_2\Big)\right)\nonumber\\
	&\quad-3\left((xx_1x_2)^{-1}\Li_{4}(x_{2})\Li_{2}(xx_1x_{2};-a)+R_{4;2}^{(-a)}\Big(x_2^{-1};xx_1x_2\Big)\right)\nonumber\\
	&\quad-3\left((xx_1x_2)^{-1}\Li_{2}(x_{1})\Li_{4}(xx_1x_{2};-a)+R_{2;4}^{(-a)}\Big(x_1^{-1};xx_1x_2\Big)\right)\\
	&\quad+4\left((xx_1x_2)^{-1}\Li_{3}(x_{1})\Li_{3}(xx_1x_{2};-a)-R_{3;3}^{(-a)}\Big(x_1^{-1};xx_1x_2\Big)\right)\nonumber\\
	&\quad-3\left((xx_1x_2)^{-1}\Li_{4}(x_{1})\Li_{2}(xx_1x_{2};-a)+R_{4;2}^{(-a)}\Big(x_1^{-1};xx_1x_2\Big)\right)\\
	&\quad-4(xx_1x_2)^{-1}\left(\Li_{1}(x)-\Li_{1}\Big(x^{-1}\Big)\right)\Li_{5}(xx_1x_2;-a)\nonumber\\
	&\quad+3(xx_1x_2)^{-1}\left(\Li_{2}(x)+\Li_{2}\Big(x^{-1}\Big)\right)\Li_{4}(xx_1x_2;-a)\nonumber\\
	&\quad-2(xx_1x_2)^{-1}\left(\Li_{3}(x)-\Li_{3}\Big(x^{-1}\Big)\right)\Li_{3}(xx_1x_2;-a)\nonumber\\
	&\quad+(xx_1x_2)^{-1}\left(\Li_{4}(x)+\Li_{4}\Big(x^{-1}\Big)\right)\Li_{2}(xx_1x_2;-a)\nonumber\\
	&\quad-(xx_1x_2)^{-1}\Li_{2}(x_1)\Li_{2}(x_2)\Li_2(xx_1x_2;-a)-\Li_{2}(x_1)R_{2;2}^{(-a)}\Big(x_2^{-1};xx_1x_2\Big)\nonumber\\
	&\quad-\Li_{2}(x_2)R_{2;2}^{(-a)}\Big(x_1^{-1};xx_1x_2\Big)\nonumber\\
	&\quad-2\left(\Li_{1}(x)-\Li_{1}\Big(x^{-1}\Big)\right)\left((xx_1x_2)^{-1}\Li_{2}(x_{2})\Li_{3}(xx_1x_{2};-a)+R^{(-a)}_{2;3}\Big(x_{2}^{-1};xx_1x_2\Big)\right)\\
	&\quad+\left(\Li_{2}(x)+\Li_{2}\Big(x^{-1}\Big)\right)\left((xx_1x_2)^{-1}\Li_{2}(x_{2})\Li_{2}(xx_1x_{2};-a)+R^{(-a)}_{2;2}\Big(x_{2}^{-1};xx_1x_2\Big)\right)\\
	&\quad+2\left(\Li_{1}(x)-\Li_{1}\Big(x^{-1}\Big)\right)
		\left((xx_1x_2)^{-1}\Li_{3}(x_{2})\Li_{2}(xx_1x_{2};-a)-R^{(-a)}_{3;2}\Big(x_{2}^{-1};xx_1x_2\Big)\right)\\
	&\quad-2\left(\Li_{1}(x)-\Li_{1}\Big(x^{-1}\Big)\right)\left((xx_1x_2)^{-1}\Li_{2}(x_{1})\Li_{3}(xx_1x_{2};-a)+R^{(-a)}_{2;3}\Big(x_{1}^{-1};xx_1x_2\Big)\right)\\
	&\quad+\left(\Li_{2}(x)+\Li_{2}\Big(x^{-1}\Big)\right)\left((xx_1x_2)^{-1}\Li_{2}(x_{1})\Li_{2}(xx_1x_{2};-a)+R^{(-a)}_{2;2}\Big(x_{1}^{-1};xx_1x_2\Big)\right)\\
	&\quad+2\left(\Li_{1}(x)-\Li_{1}\Big(x^{-1}\Big)\right)
	\left((xx_1x_2)^{-1}\Li_{3}(x_{1})\Li_{2}(xx_1x_{2};-a)-R^{(-a)}_{3;2}\Big(x_{1}^{-1};xx_1x_2\Big)\right)\\
	&\quad+\left(\Li_{2}(x;1-a)+x \Li_{2}\Big(x^{-1};a\Big) \right)\Li_{2}(x_1;-a)\Li_{2}(x_2;-a)(x_1x_2)^{-1}\\
	&\quad+2\left(\Li_{1}(x;1-a)-x \Li_{1}\Big(x^{-1};a\Big) \right)\Li_{3}(x_1;-a)\Li_{2}(x_2;-a)(x_1x_2)^{-1}\\
	&\quad+2\left(\Li_{1}(x;1-a)-x \Li_{1}\Big(x^{-1};a\Big) \right)\Li_{2}(x_1;-a)\Li_{3}(x_2;-a)(x_1x_2)^{-1}.
\end{align*}
\end{exa}

\subsection{Generalized Hurwitz-type Cyclotomic Euler Sums}
Finally, in this subsection, we employ contour integration to present three statement theorems for the case of arbitrary order of these three types of Hurwitz-type cyclotomic Euler sums.

\begin{thm}\label{thm-parityc-CES-one} Let $x,x_1,\ldots,x_r$ be roots of unity, and $p_1,\ldots,p_r,q\geq 1$ with $(p_j,x_j) $ and $ (q,xx_1\cdots x_r)\neq (1,1)$. The
\begin{align*}
&x S_{p_1,p_2,\ldots,p_r;q}^{(a-1)}\Big(x_1,x_2,\ldots,x_r;(xx_1\cdots x_r)^{-1}\Big)\\&\quad+(-1)^{p_1+p_2+\cdots+p_r+q+r}S_{p_1,p_2,\ldots,p_r;q}^{(-a)}\Big(x_1^{-1},x_2^{-1},\ldots,x_r^{-1};xx_1\cdots x_r\Big)
\end{align*}
reduces to a combination of sums of lower orders.
\end{thm}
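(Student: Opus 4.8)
The plan is to follow the template of Theorems~\ref{thm-linearCES} and \ref{thm-quadratic-CES-one}, now carrying $r$ generalized digamma factors. Setting $P:=p_1+\cdots+p_r$, I would start from the contour integral
\[
\oint\limits_{(\infty)}\frac{\Phi(s;x)\,\phi^{(p_1-1)}(s+a;x_1)\cdots\phi^{(p_r-1)}(s+a;x_r)}{(p_1-1)!\cdots(p_r-1)!\,(s+a)^q}(-1)^{P-r}\,ds=0.
\]
Denote its integrand by $F$. Because $\Phi(s;x)$ stays bounded away from its poles while the product of the $\phi^{(p_j-1)}(s+a;x_j)$ decays like $s^{-P}$ and $(s+a)^{-q}$ supplies the rest, the integrand is $O(s^{-2})$ on the expanding circles, so the hypotheses of Lemma~\ref{lem-redisue-thm} hold and the sum of all residues vanishes. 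Since $a\notin\Z$, the poles fall into three disjoint families: the integers $n\in\Z$ (simple, from $\Phi$); the point $s=-a$ (order $P+q$); and the points $s=-n-a$, $n\ge1$ (order $P$).

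First I would compute the residues at the integers. For $n\ge0$, expanding each $\phi^{(p_j-1)}(s+a;x_j)$ by the Taylor formula of Lemma~\ref{lem-extend-xuzhou-one} and taking the simple residue of $\Phi$ from Lemma~\ref{lem-rui-xu-two} gives
\[
\Res\left(F,n\right)=\frac{x^{-n}(x_1\cdots x_r)^{-n-1}}{(n+a)^q}\prod_{j=1}^{r}\left(\Li_{p_j}(x_j;a)-\zeta_n(p_j;x_j;a-1)\right),
\]
and likewise a formula for $\Res(F,-n)$, $n\ge1$, with factors $\Li_{p_j}(x_j;a)x_j^{-1}+(-1)^{p_j}\zeta_n(p_j;x_j^{-1};-a)$. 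Expanding both products, exactly one summand of each carries all $r$ copies of the finite sum $\zeta_n$. After the reindexing $n\mapsto n-1$ and the recursion $\zeta_{n-1}=\zeta_n-(\text{single }\Li\text{-type term})$, the nonnegative integers contribute $(-1)^{r}x\,S^{(a-1)}_{p_1,\ldots,p_r;q}(\ldots)$ up to polylogarithmic corrections, while the negative integers contribute directly $(-1)^{P+q}S^{(-a)}_{p_1,\ldots,p_r;q}(\ldots)$; every other summand retains at most $r-1$ factors $\zeta_n$ and hence sums to a polylogarithm times an Euler sum of order $\le r-1$.

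The decisive step is the evaluation of the residues at $s=-a$ and $s=-n-a$, for which I would combine the pole-centred expansions of Lemmas~\ref{lem-rui-xu-one} and \ref{lem-extend-xuzhou-two} through the generalized Leibniz rule. The mechanism that forces the drop in order is a power count: at $s=-n-a$ with $n\ge1$ both analytic factors $\Phi(s;x)$ and $(s+a)^{-q}$ contribute only nonnegative powers of $(s+n+a)$, so to build the required simple pole at least one $\phi^{(p_j-1)}(s+a;x_j)$ must surrender its principal part $x_j^{n}(s+n+a)^{-p_j}$, a pure monomial carrying no $\zeta_n$. Thus at most $r-1$ of the digamma factors can furnish a Taylor coefficient containing a $\zeta_n$, and after summation over $n$ these residues reduce to polylogarithm products and Euler sums of order $\le r-1$. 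At $s=-a$ the count permits every factor to be a Taylor term, but there $n=0$, so $\zeta_0=0$ annihilates all such coefficients and the residue collapses to a product of Hurwitz polylogarithms.

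Finally I would assemble the four families by Lemma~\ref{lem-redisue-thm}: the only full-depth part of the total residue sum is $(-1)^{r}x\,S^{(a-1)}+(-1)^{P+q}S^{(-a)}$, and all else has order $\le r-1$. Multiplying through by $(-1)^{r}$ yields
\[
x\,S^{(a-1)}_{p_1,\ldots,p_r;q}\left(x_1,\ldots,x_r;(xx_1\cdots x_r)^{-1}\right)+(-1)^{P+q+r}S^{(-a)}_{p_1,\ldots,p_r;q}\left(x_1^{-1},\ldots,x_r^{-1};xx_1\cdots x_r\right)=(\text{terms of order}\le r-1),
\]
which is the claimed reduction. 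The main obstacle is organizational rather than conceptual: carrying out the generalized Leibniz expansion at the high-order poles produces, already for $r=2$, the nested multi-binomial sums seen in Theorem~\ref{thm-quadratic-CES-one}, and tracking them explicitly for general $r$ is what grows in complexity. For the qualitative statement, however, the power-counting bound above is exactly what is needed to guarantee that the right-hand side has order at most $r-1$.
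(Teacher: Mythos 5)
Your proposal is correct and follows essentially the same route as the paper's own proof: the identical contour integral with kernel $\Phi(s;x)\prod_j\phi^{(p_j-1)}(s+a;x_j)/(s+a)^q$, the same three families of poles, and the same residue evaluations at the integers via Lemmas \ref{lem-rui-xu-two} and \ref{lem-extend-xuzhou-one}, yielding the full-depth combination $(-1)^r x S^{(a-1)}+(-1)^{p_1+\cdots+p_r+q}S^{(-a)}$ plus lower-order terms. Your power-counting argument for why the residues at $s=-a$ and $s=-n-a$ can only produce sums of order $\le r-1$ is a welcome explicit justification of a step the paper merely asserts.
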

\begin{proof}
The proof of this theorem is based on residue calculations of the following contour integral:
\begin{align*}
&\oint\limits_{\left( \infty  \right)}F^{(a)}_{p_1p_2\cdots p_r,q}(x,x_1,x_2,\ldots,x_r;s)ds\\
&:= \oint\limits_{\left( \infty  \right)}\frac{\Phi(s;x)\phi^{(p_1-1)}(s+a;x_1)\phi^{(p_2-1)}(s+a;x_2)\cdots\phi^{(p_r-1)}(s+a;x_r)}{(p_1-1)!(p_2-1)!\cdots (p_r-1)!(s+a)^q} (-1)^{p_1+p_2+\cdots+p_r-r}ds=0.
\end{align*}
Obviously, the integrand $F^{(a)}_{p_1p_2\cdots p_r,q}(x,x_1,x_2,\ldots,x_r;s)$ possesses the following poles in the complex plane: 1. All integer points are simple poles; 2. $s=-a$ is a pole of order $p_1+p_2+\cdots+p_r+q$; 3. $s=-n-a$ (where $n$ is a positive integer) is a pole of order $p_1+p_2+\cdots+p_r$. Applying Lemma \ref{lem-redisue-thm}, we have
\begin{align}\label{residue-sums-quad}
&\sum_{n=0}^\infty \Res\left(F^{(a)}_{p_1p_2\cdots p_r,q}(\cdot;s),n\right)+\sum_{n=1}^\infty \Res\left(F^{(a)}_{p_1p_2\cdots p_r,q}(\cdot;s),-n\right)\nonumber\\&\quad+\sum_{n=1}^\infty\Res\left(F^{(a)}_{p_1p_2\cdots p_r,q}(\cdot;s),-n-a\right)+\Res\left(F^{(a)}_{p_1p_2\cdots p_r,q}(\cdot;s),-a\right)=0.
\end{align}
At integer points, which are simple zeros, the residue values can be calculated using Lemmas \ref{lem-rui-xu-two} and \ref{lem-extend-xuzhou-one} as follows:
\begin{align*}
&\Res\left(F^{(a)}_{p_1p_2\cdots p_r,q}(\cdot;s),n\right)=\frac{x^{-n}(x_1\cdots x_r)^{-n-1}}{(n+a)^q}\prod\limits_{j=1}^r\left(\Li_{p_j}(x_j;a)-\zeta_n(p_j;x_j;a-1)\right)\quad (n\in \N_0),\\
&\Res\left(F^{(a)}_{p_1p_2\cdots p_r,q}(\cdot;s),-n\right)=\frac{(xx_1\cdots x_r)^n}{(-n+a)^q}\prod\limits_{j=1}^r\left(\Li_{p_j}(x_j;a)x_j^{-1}+(-1)^{p_j}\zeta_n\Big(p_j;x_j^{-1};-a\Big)\right)\quad (n\in \N).
\end{align*}
By expanding the two residue values above and then summing them, we obtain
\begin{align*}
&\sum_{n=0}^\infty \Res\left(F^{(a)}_{p_1p_2\cdots p_r,q}(\cdot;s),n\right)+\sum_{n=1}^\infty \Res\left(F^{(a)}_{p_1p_2\cdots p_r,q}(\cdot;s),-n\right)\\
&=x S_{p_1,p_2,\ldots,p_r;q}^{(a-1)}\Big(x_1,x_2,\ldots,x_r;(xx_1\cdots x_r)^{-1}\Big)(-1)^r\\&\quad+(-1)^{p_1+p_2+\cdots+p_r+q}S_{p_1,p_2,\ldots,p_r;q}^{(-a)}\Big(x_1^{-1},x_2^{-1},\ldots,x_r^{-1};xx_1\cdots x_r\Big)\\
&\quad+\{\text{combinations of lower-order sums}\}.
\end{align*}
Applying Lemmas \ref{lem-rui-xu-one} and \ref{lem-extend-xuzhou-two}, we can also compute the latter two residue values in \eqref{residue-sums-quad}. However, the resulting sum obtained after summation will still be of order less than $r$, namely:
\begin{align*}
&\sum_{n=1}^\infty\Res\left(F^{(a)}_{p_1p_2\cdots p_r,q}(\cdot;s),-n-a\right)+\Res\left(F^{(a)}_{p_1p_2\cdots p_r,q}(\cdot;s),-a\right)\\
&\in \{\text{combinations of lower-order sums}\}.
\end{align*}
Finally, substituting these two conclusions into \eqref{residue-sums-quad} completes the proof of the theorem.
\end{proof}

\begin{thm}\label{thm-parityc-CES-two} Let $x,x_1,\ldots,x_r$ be roots of unity, and $p_1,\ldots,p_r,q\geq 1$ with $(p_j,x_j) $ and $ (q,xx_1\cdots x_r)\neq (1,1)$. The
\begin{align*}
&(x_1\cdots x_r)^{-1}\tilde{S}_{p_1,p_2,\ldots,p_r;q}^{(a-1)}\Big(x_1,x_2,\ldots,x_r;(xx_1\cdots x_r)^{-1}\Big)\\&\quad+(-1)^{p_1+p_2+\cdots+p_r+q+r}\tilde{S}_{p_1,p_2,\ldots,p_r;q}^{(-a)}\Big(x_1^{-1},x_2^{-1},\ldots,x_r^{-1};xx_1\cdots x_r\Big)
\end{align*}
reduces to a combination of sums of lower orders (It should be noted that the lower-order sum also contains Euler $R$-sums).
\end{thm}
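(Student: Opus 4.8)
The plan is to run the same residue machinery as in the proof of Theorem~\ref{thm-parityc-CES-one}, but with the basis function $1/(s+a)^q$ replaced by $1/s^q$, which is exactly the passage from the $F$-integral to the $G$-integral already used in the quadratic case (Theorem~\ref{thm-quadratic-CES-two}). Concretely, I would start from the identity
\begin{align*}
&\oint\limits_{\left( \infty  \right)}G^{(a)}_{p_1p_2\cdots p_r,q}(x,x_1,\ldots,x_r;s)\,ds\\
&:=\oint\limits_{\left( \infty  \right)}\frac{\Phi(s;x)\phi^{(p_1-1)}(s+a;x_1)\cdots\phi^{(p_r-1)}(s+a;x_r)}{(p_1-1)!\cdots(p_r-1)!\,s^q}(-1)^{p_1+\cdots+p_r-r}\,ds=0,
\end{align*}
and list its poles: every nonzero integer is a simple pole of $\Phi(s;x)$, the origin $s=0$ is a pole of order $q+1$ (the simple pole of $\Phi$ fused with the order-$q$ pole of $s^{-q}$), and each $s=-n-a$ with $n\in\N_0$ is a pole of order $p_1+\cdots+p_r$ coming from the product of the $\phi^{(p_j-1)}(s+a;x_j)$ factors. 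By Lemma~\ref{lem-redisue-thm} the four residue sums over these families add to zero.

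First I would compute the residues at the integers by means of Lemmas~\ref{lem-rui-xu-two} and~\ref{lem-extend-xuzhou-one}, obtaining the $r$-fold analogues of the quadratic formulas,
\begin{align*}
&\Res\left(G^{(a)}_{p_1p_2\cdots p_r,q}(\cdot;s),n\right)=\frac{x^{-n}(x_1\cdots x_r)^{-n-1}}{n^q}\prod_{j=1}^r\left(\Li_{p_j}(x_j;a)-\zeta_n(p_j;x_j;a-1)\right),\\
&\Res\left(G^{(a)}_{p_1p_2\cdots p_r,q}(\cdot;s),-n\right)=(-1)^q\frac{(xx_1\cdots x_r)^n}{n^q}\prod_{j=1}^r\left(\Li_{p_j}(x_j;a)x_j^{-1}+(-1)^{p_j}\zeta_n\Big(p_j;x_j^{-1};-a\Big)\right),
\end{align*}
valid for $n\in\N$. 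Expanding each product and summing over $n$, the single term in which all $r$ factors are the finite-sum pieces $\zeta_n$ reproduces, up to the global sign $(-1)^r$ by which one normalizes the final identity, exactly the two depth-$r$ quantities $(x_1\cdots x_r)^{-1}\tilde{S}^{(a-1)}_{p_1,\ldots,p_r;q}$ and $(-1)^{p_1+\cdots+p_r+q+r}\tilde{S}^{(-a)}_{p_1,\ldots,p_r;q}$ appearing in the statement. Every remaining term in the expansion contains at least one factor $\Li_{p_j}(x_j;a)$ (or $\Li_{p_j}(x_j;a)x_j^{-1}$), hence fewer than $r$ finite sums $\zeta_n$, so after summation it becomes a product of (Hurwitz) polylogarithm values with an $\tilde{S}$-sum of depth strictly less than $r$.

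It then remains to show that the residues at $s=0$ and at $s=-n-a$ contribute only lower-depth material. For $\Res(G,0)$ I would use the order-$(q+1)$ expansion, feeding in Lemma~\ref{lem-rui-xu-two} for $\Phi$ and Lemma~\ref{lem-extend-xuzhou-one} (with $n=0$, where every $\zeta_0$ vanishes) for the $\phi^{(p_j-1)}(s+a;x_j)$; the Taylor coefficients of the latter are pure $\Li(\cdot;a)$-values, so this residue is a finite combination of products of polylogarithms, i.e.\ of depth zero. For $\Res(G,-n-a)$ I would instead invoke Lemma~\ref{lem-extend-xuzhou-two} for $\Phi$ and Lemma~\ref{lem-rui-xu-one} for each $\phi^{(p_j-1)}(s+a;x_j)$, whose argument approaches the integer $-n$; crucially, Lemma~\ref{lem-rui-xu-one} introduces only unshifted finite sums $\zeta_n(\cdot;x_j^{-1})$, and when these are summed against the denominator $(n+a)^{\bullet}$ with weight $(xx_1\cdots x_r)^n$ one obtains Euler $R$-sums $R^{(a)}_{\cdots}(x_j^{-1};xx_1\cdots x_r)$ of depth at most $r-1$, which is precisely the phenomenon flagged by the parenthetical remark. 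The main obstacle is purely organizational: one must carry out the multinomial Leibniz expansion of the order-$(p_1+\cdots+p_r)$ derivative at $s=-n-a$ and the order-$q$ derivative at $s=0$, and check that \emph{no} term of the resulting expansion carries $r$ surviving finite sums, so that nothing of depth $r$ leaks in from these higher-order poles. Once this is verified, substituting all four residue contributions into the vanishing sum of Lemma~\ref{lem-redisue-thm} and isolating the two depth-$r$ terms yields the asserted reduction.
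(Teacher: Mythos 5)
Your proposal is correct and follows exactly the route the paper takes: the paper's proof of this theorem consists of introducing the same contour integral $\oint_{(\infty)}G^{(a)}_{p_1p_2\cdots p_r,q}(x,x_1,\ldots,x_r;s)\,ds=0$ and then omitting the details as ``similar to the proof of Theorem \ref{thm-parityc-CES-one}.'' Your residue formulas at $\pm n$, the sign bookkeeping that isolates the two depth-$r$ $\tilde{S}$-sums, the observation that the poles at $0$ and $-n-a$ yield only lower-depth material (including the $R$-sums, since at least one $\phi$-factor must contribute its polar part so at most $r-1$ finite sums $\zeta_n$ can survive), all agree with what the paper does explicitly in the quadratic case (Theorem \ref{thm-quadratic-CES-two}) and sketches in Theorem \ref{thm-parityc-CES-one}.
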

\begin{proof}
To prove this theorem, we only need to consider contour integrals of the following form:
\begin{align*}
&\oint\limits_{\left( \infty  \right)}G^{(a)}_{p_1p_2\cdots p_r,q}(x,x_1,x_2,\ldots,x_r;s)ds\\
&:= \oint\limits_{\left( \infty  \right)}\frac{\Phi(s;x)\phi^{(p_1-1)}(s+a;x_1)\phi^{(p_2-1)}(s+a;x_2)\cdots\phi^{(p_r-1)}(s+a;x_r)}{(p_1-1)!(p_2-1)!\cdots (p_r-1)!s^q} (-1)^{p_1+p_2+\cdots+p_r-r}ds=0.
\end{align*}
The specific procedure is similar to the proof of Theorem \ref{thm-parityc-CES-one} and is therefore omitted here.
\end{proof}

\begin{thm}\label{thm-parityc-CES-three} Let $x,x_1,\ldots,x_r$ be roots of unity, and $p_1,\ldots,p_r,q\geq 1$ with $(p_j,x_j) $ and $ (q,xx_1\cdots x_r)\neq (1,1)$. The
\begin{align*}
&(xx_1\cdots x_r)^{-1}R_{p_1,\ldots,p_r;q}^{(a+1)}\Big(x_1,\ldots,x_r;(xx_1\cdots x_r)^{-1}\Big)\\
&\quad+(-1)^{p_1+\cdots+p_r+q+r}R_{p_1,\ldots,p_r;q}^{(-a)}\Big(x_1^{-1},\ldots,x_r^{-1};xx_1\cdots x_r\Big)
\end{align*}
reduces to a combination of sums of lower orders.
\end{thm}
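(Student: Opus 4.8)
The plan is to imitate the depth-two argument of Theorem~\ref{thm-quadratic-CES-three} at arbitrary depth, applying Cauchy's residue theorem (Lemma~\ref{lem-redisue-thm}) to the kernel
\begin{align*}
\oint\limits_{(\infty)}H^{(a)}_{p_1p_2\cdots p_r,q}(x,x_1,\ldots,x_r;s)\,ds
:=\oint\limits_{(\infty)}\frac{\Phi(s;x)\phi^{(p_1-1)}(s;x_1)\cdots\phi^{(p_r-1)}(s;x_r)}{(p_1-1)!\cdots(p_r-1)!\,(s+a)^q}(-1)^{p_1+\cdots+p_r-r}\,ds=0.
\end{align*}
Since $a\in\CC\setminus\Z$, the integrand has simple poles at the positive integers, poles of order $p_1+\cdots+p_r+1$ at the non-positive integers (the extra unit coming from the simple pole of $\Phi$), and a pole of order $q$ at $s=-a$. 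By Lemma~\ref{lem-redisue-thm} these four families of residues sum to zero, so the task reduces to showing that exactly one depth-$r$ sum survives from each family of integer poles, while every other contribution is of lower order.

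First I would compute the residues at the positive integers. Using the Taylor expansion of $\phi^{(p-1)}(s;x)$ at $s=n$ (Lemma~\ref{lem-rui-xu-one}) together with the Laurent expansion of $\Phi(s;x)$ (Lemma~\ref{lem-rui-xu-two}), a direct calculation gives
\begin{align*}
\Res\left(H^{(a)}_{p_1p_2\cdots p_r,q}(\cdot;s),n\right)
=\frac{(xx_1\cdots x_r)^{-n}}{(n+a)^q}\prod_{j=1}^r\left(\Li_{p_j}(x_j)-\zeta_{n-1}(p_j;x_j)\right)\qquad(n\in\N).
\end{align*}
Expanding the product over the $2^r$ subsets of $\{1,\ldots,r\}$, the unique all-$\zeta_{n-1}$ term equals $(-1)^r(xx_1\cdots x_r)^{-n}(n+a)^{-q}\prod_j\zeta_{n-1}(p_j;x_j)$; summing over $n$ and shifting $n\mapsto n+1$ (legitimate since $\zeta_0\equiv0$) yields precisely $(-1)^r(xx_1\cdots x_r)^{-1}R^{(a+1)}_{p_1,\ldots,p_r;q}(x_1,\ldots,x_r;(xx_1\cdots x_r)^{-1})$. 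Every other subset contributes at least one constant factor $\Li_{p_j}(x_j)$, so the corresponding series in $n$ is a depth-$(<r)$ Euler $R$-sum.

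Next, the residue at $s=-a$ is handled by the analytic expansions of $\phi^{(p-1)}(s;x)$ and of $\Phi(s;x)$ at $-a$ furnished by Lemmas~\ref{lem-extend-xuzhou-three} and~\ref{lem-extend-xuzhou-two} (the latter with $n=0$); extracting the coefficient of $(s+a)^{q-1}$ by the Leibniz rule produces a finite sum over compositions $k_0+k_1+\cdots+k_r=q-1$ of products of single shifted polylogarithms, which is manifestly of lower order. Hence the second depth-$r$ sum, $R^{(-a)}_{p_1,\ldots,p_r;q}(x_1^{-1},\ldots,x_r^{-1};xx_1\cdots x_r)$, can only originate from the high-order poles at the non-positive integers.

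The hard part will be the residue at the non-positive integers, where the pole has order $p_1+\cdots+p_r+1$. Here I would feed into the generalized Leibniz rule the Laurent expansion of each $\phi^{(p_j-1)}(s;x_j)$ (with principal part $x_j^n/(s+n)^{p_j}$ and Taylor tail carrying $\Li_{k+p_j}(x_j)$ and $(-1)^{p_j}\zeta_n(k+p_j;x_j^{-1})$, from Lemma~\ref{lem-rui-xu-one}), the Laurent expansion of $\Phi(s;x)$ (Lemma~\ref{lem-rui-xu-two}), and the Taylor series of $(s+a)^{-q}$, all centred at $s=-n$. The resulting multi-indexed sum is lengthy, but its depth count is governed by a single structural fact: the only term in which every factor $\phi^{(p_j-1)}$ contributes its $\zeta_n(\,\cdot\,;x_j^{-1})$ part produces $(-1)^{q+p_1+\cdots+p_r}\sum_{n\ge1}(xx_1\cdots x_r)^n\prod_j\zeta_n(p_j;x_j^{-1})/(n-a)^q=(-1)^{q+p_1+\cdots+p_r}R^{(-a)}_{p_1,\ldots,p_r;q}(x_1^{-1},\ldots,x_r^{-1};xx_1\cdots x_r)$, while in every other term at least one factor donates a constant $\Li_\bullet$ and the depth drops below $r$. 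Collecting the four residue families, rearranging the vanishing residue identity, and multiplying by $(-1)^r$ then reproduces the stated parity relation, the sign $(-1)^{p_1+\cdots+p_r+q+r}$ on $R^{(-a)}$ being exactly what emerges; along the way I would use the identity $(-1)^m\Li_{m+1}(x;1-a)-x\Li_{m+1}(x^{-1};a)=(-1)^m\Li_{m+1}(x^{-1};a+1)-x^{-1}\Li_{m+1}(x;-a)$ from Theorem~\ref{thm-linearCES-3} to bring all shifted polylogarithms to the arguments $a+1$ and $-a$ native to the $R$-sums.
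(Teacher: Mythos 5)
Your proposal is correct and takes essentially the same approach as the paper: the paper's own proof consists precisely of writing down the contour integral $\oint_{(\infty)}H^{(a)}_{p_1p_2\cdots p_r,q}(x,x_1,\ldots,x_r;s)\,ds=0$ that you use and noting that the argument of Theorem \ref{thm-parityc-CES-one} (carried out in full in the quadratic case, Theorem \ref{thm-quadratic-CES-three}) goes through, which is exactly the pole classification and residue expansion you detail. Your identification of the two surviving depth-$r$ contributions --- the all-$\zeta_{n-1}$ product at the positive integers (shifted to give $R^{(a+1)}$) and the all-$\zeta_n$ product at the non-positive integers (giving $(-1)^{q+p_1+\cdots+p_r}R^{(-a)}$) --- together with the final signs matches the paper's computation.
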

\begin{proof}
To prove this theorem, we only need to consider contour integrals of the following form:
\begin{align*}
&\oint\limits_{\left( \infty  \right)}H^{(a)}_{p_1p_2\cdots p_r,q}(x,x_1,x_2,\ldots,x_r;s)ds\\
&:= \oint\limits_{\left( \infty  \right)}\frac{\Phi(s;x)\phi^{(p_1-1)}(s;x_1)\phi^{(p_2-1)}(s;x_2)\cdots\phi^{(p_r-1)}(s;x_r)}{(p_1-1)!(p_2-1)!\cdots (p_r-1)!(s+a)^q} (-1)^{p_1+p_2+\cdots+p_r-r}ds=0.
\end{align*}
The proof of this theorem is omitted as it follows a similar line of reasoning to that of Theorem \ref{thm-parityc-CES-one}.
\end{proof}

\section{Parity Results of Multiple Hurwitz Polylogarithm Function}
In this section, by employing the relationship between Hurwitz-type cyclotomic Euler $S$-sums and the multiple Hurwitz  polylogarithm function, combined with the results from the preceding sections, we present the parity results and explicit formulas for the multiple Hurwitz  polylogarithm function of depth at most 3.

According to definition of linear Hurwitz-type cyclotomic Euler $S$-sums and double Hurwitz polylogarithm function with $2$-variables, we have
\begin{align*}
	S_{p;q}^{(a)}(x;y)=\Li_{p,q}(x,y;a+1)+\Li_{p+q}(xy;a+1)
\end{align*}
Therefore, we can derive the following corollary regarding the parity of multiple Hurwitz polylogarithm function.
\begin{cor}\label{thm-linearCES-cor}Let $x,y$ be $N$-th roots of unity and $a\in \mathbb{C}\setminus \Z$, and $p,q\ge1$ with $(p,y),(q,y)\ne(1,1)$. Then
\begin{align}
\Li_{p,q}(x,y;a)-(-1)^{p+q}xy\Li_{p,q}\Big(x^{-1},y^{-1};1-a\Big)
\end{align}
reduces to a combination of single Hurwitz polylogarithm functions.
\end{cor}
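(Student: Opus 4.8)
The plan is to read off the corollary directly from Theorem~\ref{thm-linearCES}, using the stuffle-type identity $S_{p;q}^{(a)}(x;y)=\Li_{p,q}(x,y;a+1)+\Li_{p+q}(xy;a+1)$ recorded just above to convert the two Hurwitz-type cyclotomic Euler $S$-sums there into double Hurwitz polylogarithms of depth $2$. The single polylogarithms produced along the way, together with the right-hand side of the theorem (which is already a polynomial in single-variable Hurwitz polylogarithms), then constitute the asserted ``combination of single Hurwitz polylogarithm functions''.

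First I would expand each Euler $S$-sum on the left of Theorem~\ref{thm-linearCES}. Applying the identity with parameter $a-1$ and arguments $y,(xy)^{-1}$ (and using $y\cdot(xy)^{-1}=x^{-1}$) gives
\begin{align*}
S_{p;q}^{(a-1)}\Big(y;(xy)^{-1}\Big)=\Li_{p,q}\Big(y,(xy)^{-1};a\Big)+\Li_{p+q}\Big(x^{-1};a\Big),
\end{align*}
while with parameter $-a$ and arguments $y^{-1},xy$ (using $y^{-1}\cdot xy=x$) it gives
\begin{align*}
S_{p;q}^{(-a)}\Big(y^{-1};xy\Big)=\Li_{p,q}\Big(y^{-1},xy;1-a\Big)+\Li_{p+q}(x;1-a).
\end{align*}
Substituting these into Theorem~\ref{thm-linearCES} and moving the two resulting single polylogarithms $x\Li_{p+q}(x^{-1};a)$ and $(-1)^{p+q}\Li_{p+q}(x;1-a)$ to the right, I arrive at an identity of the shape
\begin{align*}
x\,\Li_{p,q}\Big(y,(xy)^{-1};a\Big)-(-1)^{p+q}\Li_{p,q}\Big(y^{-1},xy;1-a\Big)=\{\text{single Hurwitz polylogarithms}\}.
\end{align*}

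The last step is a relabelling. Writing $\widetilde x=y$ and $\widetilde y=(xy)^{-1}$, so that the theorem's $x$ equals $(\widetilde x\widetilde y)^{-1}$ and hence $y^{-1}=\widetilde x^{-1}$ and $xy=\widetilde y^{-1}$, the left side becomes $(\widetilde x\widetilde y)^{-1}\Li_{p,q}(\widetilde x,\widetilde y;a)-(-1)^{p+q}\Li_{p,q}(\widetilde x^{-1},\widetilde y^{-1};1-a)$; multiplying by $\widetilde x\widetilde y$ and renaming $\widetilde x,\widetilde y$ back to $x,y$ produces exactly the expression in the corollary. The admissibility constraints $(p,y),(q,xy)\neq(1,1)$ of the theorem translate, under this substitution, into the hypotheses needed for the displayed double Hurwitz polylogarithms to converge. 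There is no genuine analytic obstacle here: all of the substantive content lives in Theorem~\ref{thm-linearCES}, whose proof is the contour-integration computation already carried out, so the only point requiring care is the bookkeeping of the substitution---tracking the overall factor $xy$ and confirming that the argument pair $(y^{-1},xy)$ becomes $(x^{-1},y^{-1})$ under the relabelling.
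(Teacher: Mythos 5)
Your proposal is correct and follows exactly the paper's route: the paper likewise derives the corollary from Theorem~\ref{thm-linearCES} via the stuffle identity $S_{p;q}^{(a)}(x;y)=\Li_{p,q}(x,y;a+1)+\Li_{p+q}(xy;a+1)$, absorbing the resulting depth-one terms into the right-hand side and relabelling the variables. Your bookkeeping of the substitution (the factor $xy$ and the conversion of $(y^{-1},xy)$ into $(x^{-1},y^{-1})$) matches the computation the paper leaves implicit.
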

\begin{exa}Let $(p,q)=(2,2)$ and $(p,q)=(2,3)$ in Corollary \ref{thm-linearCES-cor}, we have
	\begin{align*}
		&\Li_{2,2}\Big(x,y;a\Big)-xy\Li_{2,2}\Big(x^{-1},y^{-1};1-a\Big)\\
		&=-\Li_{4}(xy;a)+xy\Li_{4}\Big((xy)^{-1};1-a\Big)+\Li_{2}(x;a)\Li_2(y;a)+y\Li_2(x;a)\Li_2\Big(y^{-1};1-a\Big)\\
		&\quad-xy\Li_{4}\Big((xy)^{-1};1-a\Big)+2xy\left(\Li_{1}\Big((xy)^{-1};1-a\Big)-(xy)^{-1}\Li_{1}(xy;a)\right)\Li_{3}\Big(y^{-1}\Big)\\
		&\quad-xy\left(\Li_{2}\Big((xy)^{-1};1-a\Big)+(xy)^{-1}\Li_{2}(xy;a)\right)\Li_{2}\Big(y^{-1}\Big)\\
		&\quad+2xy\left((xy)^{-1}\Li_{1}(xy;a)-\Li_{1}\Big((xy)^{-1};1-a\Big)\right)\Li_{3}(x)\\
		&\quad-xy\left((xy)^{-1}\Li_{2}(xy;a)+\Li_{2}\Big((xy)^{-1};1-a\Big)\right)\Li_{2}(x),\\
		     &\Li_{2,3}\Big(x,y;a\Big)+xy\Li_{2,3}\Big(x^{-1},y^{-1};1-a\Big)\\
		&=-\Li_{5}(xy;a)-xy\Li_{5}\Big((xy)^{-1};1-a\Big)+\Li_{2}(x;a)\Li_3(y;a)-y\Li_2(x;a)\Li_3\Big(y^{-1};1-a\Big)\\
		&\quad+xy\Li_{5}\Big((xy)^{-1};1-a\Big)-3xy\left(\Li_{1}\Big((xy)^{-1};1-a\Big)-(xy)^{-1}\Li_{1}\Big(xy;a\Big)\right)\Li_{4}\Big(y^{-1}\Big)\\
		&\quad+xy\left(\Li_{2}\Big((xy)^{-1};1-a\Big)+(xy)^{-1}\Li_{2}(xy;a)\right)\Li_{3}\Big(y^{-1}\Big)\\
		&\quad-3xy\left((xy)^{-1}\Li_{1}\Big(xy;a\Big)-\Li_{1}\Big((xy)^{-1};1-a\Big)\right)\Li_{4}(x)\\
		&\quad+2xy\left((xy)^{-1}\Li_{2}\Big(xy;a\Big)+\Li_{2}\Big((xy)^{-1};1-a\Big)\right)\Li_{3}(x)\\
		&\quad-xy\left((xy)^{-1}\Li_{3}\Big(xy;a\Big)-\Li_{3}\Big((xy)^{-1};1-a\Big)\right)\Li_{2}(x).
	\end{align*}
\end{exa}
According to definition of Hurwitz-type cyclotomic quadratic Euler $S$-sums and multiple Hurwitz polylogarithm function with $3$-variables, we have
\begin{align*}
	&S^{(a)}_{p_1,p_2;q}(x_1,x_2;x)\\
	&=\sum_{n=1}^\infty\frac{\zeta_{n}(p_1;x_1;a)\zeta_{n}(p_2;x_2;a)}{(n+a)^q}x^n\\
	&=\sum_{n=1}^\infty\frac{(\zeta_{n}(p_1;x_1;a)-\Li_{p_1}(x_1;a+1))\zeta_{n}(p_2;x_2;a)}{(n+a)^q}x^n+\Li_{p_1}(x_1;a+1)\sum_{n=1}^\infty\frac{\zeta_{n}(p_2;x_2;a)}{(n+a)^q}x^n\\
	&=\sum_{n=1}^\infty\frac{(\zeta_{n}(p_1;x_1;a)-\Li_{p_1}(x_1;a+1))\zeta_{n-1}(p_2;x_2;a)}{(n+a)^q}x^n+\sum_{n=1}^\infty\frac{\zeta_{n}(p_1;x_1;a)-\Li_{p_1}(x_1;a+1)}{(n+a)^{p_2+q}}(xx_2)^n\\
	&\quad+\Li_{p_1}(x_1;a+1)\sum_{n=1}^\infty\frac{\zeta_{n-1}(p_2;x_2;a)+\frac{x_2^{n}}{(n+a)^{p_2}}}{(n+a)^q}x^n\\
	&=-\Li_{p_2,q,p_1}(x_2,x,x_1;a+1)-\Li_{p_2+q,p_1}(x_2x,x_1;a+1)\\
	&\quad+\Li_{p_1}(x_1;a+1)\left(\Li_{p_2,q}(x_2,x;a+1)+\Li_{p_2+q}(xx_2;a+1)\right).
\end{align*}
Therefore, we can derive the following corollary regarding the parity of multiple Hurwitz polylogarithm function with $3$-variables with a direct calculation.
\begin{cor}\label{thm-quadratic-CES-one-cor}Let $x,y,z$ be $N$th-roots of unity and $a\in \mathbb{C}\setminus \Z$, and $p,q,r \ge1$ with $(p,x),(q,y)$ and $(r,z)\ne(1,1)$. Then
	\begin{align*}
		\Li_{p,q,r}(x,y,z;a)	+(-1)^{p+q+r}xyz\Li_{p,q,q}\Big(x^{-1},y^{-1},z^{-1};1-a\Big)
	\end{align*}
reduces to a combination of multiple Hurwitz polylogarithm function with depth $\leq 2$.
\end{cor}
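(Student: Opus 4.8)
The plan is to obtain the corollary directly from Theorem~\ref{thm-quadratic-CES-one} by rewriting every quadratic Hurwitz-type Euler $S$-sum occurring there as a combination of multiple Hurwitz polylogarithms, using the identity displayed just above the corollary,
\begin{align*}
S^{(a)}_{p_1,p_2;q}(x_1,x_2;x)=-\Li_{p_2,q,p_1}(x_2,x,x_1;a+1)+\{\text{depth}\le 2\},
\end{align*}
where the omitted terms are $-\Li_{p_2+q,p_1}$ together with products of a single $\Li$ with a double $\Li$, hence all of depth at most $2$. Since the entire right-hand side of Theorem~\ref{thm-quadratic-CES-one} consists of linear Hurwitz-type $S$-sums (which are depth-$2$ by the identity preceding Corollary~\ref{thm-linearCES-cor}) and products of single polylogarithms, only the two depth-$3$ pieces coming from the left-hand side need to be tracked.

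First I would substitute this identity into the two $S$-sums on the left of Theorem~\ref{thm-quadratic-CES-one}. Writing the first sum $S^{(a-1)}_{p_1,p_2;q}(x_1,x_2;(xx_1x_2)^{-1})$ via the identity (so $a\mapsto a-1$ and the inner variable is $(xx_1x_2)^{-1}$, turning $a+1$ into $a$) produces the depth-$3$ piece $-\Li_{p_2,q,p_1}(x_2,(xx_1x_2)^{-1},x_1;a)$. Treating the second sum $S^{(-a)}_{p_1,p_2;q}(x_1^{-1},x_2^{-1};xx_1x_2)$ likewise (so $a\mapsto -a$, all outer variables inverted, inner variable $xx_1x_2$, turning $a+1$ into $1-a$) produces the depth-$3$ piece $-\Li_{p_2,q,p_1}(x_2^{-1},xx_1x_2,x_1^{-1};1-a)$. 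The key observation is that these two polylogarithms have mutually inverse arguments and Hurwitz parameters $a$ and $1-a$.

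Next I would isolate the depth-$3$ part of the left-hand side of Theorem~\ref{thm-quadratic-CES-one}, which is
\begin{align*}
&-x\,\Li_{p_2,q,p_1}(x_2,(xx_1x_2)^{-1},x_1;a)\\
&\quad-(-1)^{p_1+p_2+q}\Li_{p_2,q,p_1}(x_2^{-1},xx_1x_2,x_1^{-1};1-a).
\end{align*}
Using $x_2\cdot(xx_1x_2)^{-1}\cdot x_1=x^{-1}$ and $p_2+q+p_1=p_1+p_2+q$, I would factor out $-x$ to rewrite this as $-x$ times
\begin{align*}
&\Li_{p_2,q,p_1}(x_2,(xx_1x_2)^{-1},x_1;a)\\
&\quad+(-1)^{p_1+p_2+q}\Big(x_2\,(xx_1x_2)^{-1}\,x_1\Big)\Li_{p_2,q,p_1}(x_2^{-1},xx_1x_2,x_1^{-1};1-a),
\end{align*}
which is exactly the corollary's expression under the dictionary $(p,q,r)=(p_2,q,p_1)$ and $(x,y,z)=(x_2,(xx_1x_2)^{-1},x_1)$. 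Because all the remaining terms (the depth-$\le 2$ remnants of the identity and the whole right-hand side of the theorem) have depth at most $2$, the theorem forces this depth-$3$ combination to equal a combination of multiple Hurwitz polylogarithms of depth $\le 2$; dividing by $-x$ yields the claim. Finally I would verify that every admissible triple $(p,q,r),(x,y,z)$ is realized by choosing $p_1=r,\,p_2=p,\,x_1=z,\,x_2=x,\,x=(xyz)^{-1}$, under which the theorem's hypotheses $(p_1,x_1),(p_2,x_2),(q,xx_1x_2)\ne(1,1)$ become precisely $(r,z),(p,x),(q,y)\ne(1,1)$.

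The main obstacle is purely the bookkeeping of the second step: one must check carefully that inverting the three outer variables together with $a\mapsto -a$ sends the depth-$3$ polylogarithm to the one with all arguments inverted and parameter $1-a$, and that no residual depth-$3$ contribution is hidden among the ``lower-order'' terms of either the conversion identity or the right-hand side of Theorem~\ref{thm-quadratic-CES-one}. Once this is confirmed, the factorization by $-x$ and the reparametrization are immediate.
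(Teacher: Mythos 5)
Your proposal is correct and follows essentially the same route as the paper: the paper itself derives the identity $S^{(a)}_{p_1,p_2;q}(x_1,x_2;x)=-\Li_{p_2,q,p_1}(x_2,x,x_1;a+1)+\{\text{depth}\le 2\}$ immediately before the corollary and then obtains the corollary by "a direct calculation," which is exactly your substitution into Theorem \ref{thm-quadratic-CES-one}, isolation of the depth-$3$ parts, and factorization by $-x$. Your bookkeeping (the mutually inverse arguments with parameters $a$ and $1-a$, the relation $x^{-1}=x_2(xx_1x_2)^{-1}x_1$, and the reparametrization $p_1=r$, $p_2=p$, $x_1=z$, $x_2=x$, theorem-$x=(xyz)^{-1}$ with $(q,xx_1x_2)=(q,y^{-1})\ne(1,1)\Leftrightarrow(q,y)\ne(1,1)$) is accurate, including the sign $+(-1)^{p+q+r}$.
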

\begin{exa}
	Let $(p,q,r)=(2,1,2)$ in Corollary \ref{thm-quadratic-CES-one-cor}, we have
\begin{align*}
	&\Li_{2,1,2}(x,y,z;a)	-xyz\Li_{2,1,2}\Big(x^{-1},y^{-1},z^{-1};1-a\Big)\\
	&=-\Li_{3,2}(xy,z;a)+\Li_{2}(z;a)\left(\Li_{2,1}(x,y;a)+\Li_{3}(xy;a)\right)+xyz\Li_{3,2}\Big((xy)^{-1},z^{-1};1-a\Big)\\
	&\quad-xyz\Li_{2}\Big(z^{-1};1-a\Big)\left(\Li_{2,1}\Big(x^{-1},y^{-1};1-a\Big)+\Li_{3}\Big((xy)^{-1};1-a\Big)\right)\\
	&\quad-S_{2;3}^{(a-1)}(z;xy)-S_{2;3}^{(a-1)}(x;yz)-\Li_{2}(z;a)S_{2;1}^{(a-1)}(x;y)-\Li_{2}(x;a)S_{2;1}^{(a-1)}(z;y)\nonumber\\
	&\quad-xy\Li_{2}(z;a)S_{2;1}^{(-a)}\Big(x^{-1};y^{-1}\Big)-yz\Li_{2}(x;a)S_{2;1}^{(-a)}\Big(z^{-1};y^{-1}\Big)\nonumber\\
	&\quad+xyz\Li_{5}\Big((xyz)^{-1};1-a\Big)+\Li_{2}(z;a)\Li_{3}(xy;a)+\Li_{2}(x;a)\Li_{3}(yz;a)\nonumber\\
	&\quad+\Li_{2}(z;a)\Li_{2}(x;a)\Li_1(y;a)-y\Li_{2}(z;a)\Li_{2}(x;a)\Li_1\Big(y^{-1};1-a\Big)\nonumber\\
	&\quad-xyz\left(\Li_{1}\Big((xyz)^{-1};1-a\Big)-(xyz)^{-1}\Li_{1}(xyz;a)\right)\Li_{4}\Big(y^{-1}\Big)\\
	&\quad+xyz\left(\Li_{2}\Big((xyz)^{-1};1-a\Big)+(xyz)^{-1}\Li_{2}(xyz;a)\right)\Li_{3}\Big(y^{-1}\Big)\nonumber\\
	&\quad-xyz\left(\Li_{3}\Big((xyz)^{-1};1-a\Big)-(xyz)^{-1}\Li_{3}(xyz;a)\right)\Li_{2}\Big(y^{-1}\Big)\\
	&\quad+xyz\left(\Li_{4}\Big((xyz)^{-1};1-a\Big)+(xyz)^{-1}\Li_{4}(xyz;a)\right)\Li_{1}\Big(y^{-1}\Big)\nonumber\\
	&\quad-xyz\left(\Li_{1}\Big((xyz)^{-1};1-a\Big)-(xyz)^{-1} \Li_{1}(xyz;a)\right)\left(\Li_{2}(z)\Li_{2}\Big(y^{-1}\Big)+S_{2;2}\Big(z^{-1};y^{-1}\Big)\right)\nonumber\\
	&\quad+2xyz\left(\Li_{1}\Big((xyz)^{-1};1-a\Big)-(xyz)^{-1} \Li_{1}(xyz;a)\right)\left(\Li_{3}(z)\Li_{1}\Big(y^{-1}\Big)-S_{3;1}\Big(z^{-1};y^{-1}\Big)\right)\nonumber\\
	&\quad+xyz\left(\Li_{2}\Big((xyz)^{-1};1-a\Big)+(xyz)^{-1} \Li_{2}(xyz;a)\right)\left(\Li_{2}(z)\Li_{1}\Big(y^{-1}\Big)+S_{2;1}\Big(z^{-1};y^{-1}\Big)\right)\nonumber\\
	&\quad-xyz\left(\Li_{1}\Big((xyz)^{-1};1-a\Big)-(xyz)^{-1} \Li_{1}(xyz;a)\right)\left(\Li_{2}(x)\Li_{2}\Big(y^{-1}\Big)+S_{2;2}\Big(x^{-1};y^{-1}\Big)\right)\nonumber\\
	&\quad+2xyz\left(\Li_{1}\Big((xyz)^{-1};1-a\Big)-(xyz)^{-1} \Li_{1}(xyz;a)\right)\left(\Li_{3}(x)\Li_{1}\Big(y^{-1}\Big)-S_{3;1}\Big(x^{-1};y^{-1}\Big)\right)\nonumber\\
	&\quad+xyz\left(\Li_{2}\Big((xyz)^{-1};1-a\Big)+(xyz)^{-1} \Li_{2}(xyz;a)\right)\left(\Li_{2}(x)\Li_{1}\Big(y^{-1}\Big)+S_{2;1}\Big(x^{-1};y^{-1}\Big)\right)\\
	&\quad+xyz\Li_{2}(x)\left(\Li_{3}\Big((xyz)^{-1};1-a\Big)-(xyz)^{-1} \Li_{3}\Big(xyz;a\Big)\right)\\
	&\quad+2xyz\Li_{3}(x)\left(\Li_{2}\Big((xyz)^{-1};1-a\Big)+(xyz)^{-1}\Li_{2}\Big(xyz;a\Big)\right)\nonumber\\
	&\quad+3xyz\Li_{4}(x)\left(\Li_{1}\Big((xyz)^{-1};1-a\Big)-(xyz)^{-1}\Li_{1}\Big(xyz;a\Big)\right)\\
    &\quad+xyz\Li_{2}(z)\left(\Li_{3}\Big((xyz)^{-1};1-a\Big)-(xyz)^{-1} \Li_{3}\Big(xyz;a\Big)\right)\\
    &\quad+2xyz\Li_{3}(z)\left(\Li_{2}\Big((xyz)^{-1};1-a\Big)+(xyz)^{-1}\Li_{2}\Big(xyz;a\Big)\right)\nonumber\\
    &\quad+3xyz\Li_{4}(z)\left(\Li_{1}\Big((xyz)^{-1};1-a\Big)-(xyz)^{-1}\Li_{1}\Big(xyz;a\Big)\right)\\
	&\quad+xyz\Li_{2}(z)\Li_2(x)\left(\Li_{1}\Big((xyz)^{-1};1-a\Big)-(xyz)^{-1}\Li_{1}\Big(xyz;a\Big)\right).
\end{align*}
\end{exa}

All results in references \cite{Wang-Xu2025,XC2025} can be obtained from the main theorems of this paper by setting $a=\pm 1/2$.

Finally, based on our observations and computations, we conclude this paper by proposing the following conjecture concerning the parity of multiple Hurwitz polylogarithm functions of arbitrary depth.
\begin{con}\label{conjparitycmHZv} Let $r>1$ and $x_1,\ldots,x_r$ be roots of unity and $a\in \mathbb{C}\setminus \Z$, and $k_1,\ldots,k_r\geq 1$ with $(k_r,x_r)\neq (1,1)$. Then
\begin{align*}
\Li_{k_1,\ldots,k_r}(x_1,\ldots,x_r;a)+(-1)^{k_1+\cdots+k_r}x_1x_2\cdots x_r\Li_{k_1,\ldots,k_r}\Big(x^{-1}_1,\ldots,x^{-1}_r;1-a\Big)
\end{align*}
can be expressed in terms of a rational linear combination of multiple Hurwitz polylogarithm functions with depth $< r$.
\end{con}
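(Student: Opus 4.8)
The plan is to deduce Conjecture~\ref{conjparitycmHZv} from the general parity theorem for Hurwitz-type cyclotomic Euler $S$-sums, Theorem~\ref{thm-parityc-CES-one}, by means of an explicit stuffle-type dictionary between $S$-sums and multiple Hurwitz polylogarithm functions. The guiding observation, already implicit in the depth-$2$ and depth-$3$ computations of this section, is that a degree-$(r-1)$ Euler $S$-sum expands, upon resolving the stuffle product of the finite sums $\zeta_n(p_j;x_j;a)$ against the outer factor $(n+a)^{-q}x^n$, into a linear combination of multiple Hurwitz polylogarithms of depth at most $r$ whose \emph{unique} top-depth contribution is a single depth-$r$ polylogarithm. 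The superscript shifts by one under this expansion, so that $S^{(a-1)}$ produces polylogarithms with parameter $a$ while $S^{(-a)}$ produces polylogarithms with parameter $1-a$ and reciprocal arguments; this is exactly the reflection $a\mapsto 1-a$, $x_j\mapsto x_j^{-1}$ carried by the conjecture.

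First I would establish the general conversion formula
\[
S^{(a)}_{p_1,\ldots,p_m;q}(x_1,\ldots,x_m;x)=\varepsilon_m\,\Li_{\boldsymbol{k}}(\boldsymbol{\xi};a+1)+\{\,\text{polylogarithms of depth $\le m$}\,\},
\]
by induction on the degree $m$. Writing $\zeta_n(p_m;x_m;a)=\Li_{p_m}(x_m;a+1)-(\Li_{p_m}(x_m;a+1)-\zeta_n(p_m;x_m;a))$ and substituting into the summand peels off the totally ordered configuration $0<n_1<\cdots<n_{m+1}$, which carries the top-depth term $\varepsilon_m\Li_{\boldsymbol{k}}(\boldsymbol{\xi};a+1)$, from strictly lower-depth pieces, exactly as in the displayed expansion of $S^{(a)}_{p_1,p_2;q}$ preceding Corollary~\ref{thm-quadratic-CES-one-cor}. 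The string $(\boldsymbol{k},\boldsymbol{\xi})$ is a fixed permutation of $(p_1,\ldots,p_m,q)$ and of $(x_1,\ldots,x_m,x)$, and the sign satisfies $\varepsilon_1=+1$, $\varepsilon_2=-1$ as recorded in the linear and quadratic cases.

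Next I would invert this formula to write $\Li_{k_1,\ldots,k_r}(x_1,\ldots,x_r;a)$ as $\varepsilon_{r-1}$ times a degree-$(r-1)$ $S$-sum minus explicit polylogarithms of depth $<r$, and likewise for the reflected argument. Forming the symmetric combination, the two $S$-sums assemble, up to the unit factor $\varepsilon_{r-1}x$ and depth-$<r$ corrections, into the left-hand side of Theorem~\ref{thm-parityc-CES-one},
\[
x\,S^{(a-1)}_{p_1,\ldots,p_{r-1};q}(\cdots)+(-1)^{p_1+\cdots+p_{r-1}+q+r-1}S^{(-a)}_{p_1,\ldots,p_{r-1};q}(\cdots),
\]
which that theorem reduces to $S$-sums of strictly smaller degree; re-expanding the latter by the conversion formula turns them into multiple Hurwitz polylogarithms of depth $<r$. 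Since the subtracted remainders are also of depth $<r$, the symmetric depth-$r$ combination is expressed directly as a polynomial in multiple Hurwitz polylogarithms of depth $<r$, with no separate induction on the depth required; Corollaries~\ref{thm-linearCES-cor} and~\ref{thm-quadratic-CES-one-cor} are the instances $r=2$ and $r=3$.

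The hardest part will be the sign and permutation bookkeeping in the general conversion formula and its behaviour under reflection. The depth-$2$ case already exhibits the nontrivial sign $\varepsilon_2=-1$ together with the reordering $(x_2,x,x_1)$, so for general $m$ one must identify the canonical permutation producing $(\boldsymbol{k},\boldsymbol{\xi})$ and the sign $\varepsilon_m$, and verify that the reflected $S$-sum's top term pairs with the original to reproduce the coefficient $x_1\cdots x_r$, namely the product of the polylogarithm's own arguments, which equals the inverse of the distinguished root of unity $x$ in Theorem~\ref{thm-parityc-CES-one}. The residue sign of that theorem contributes $(-1)^{p_1+\cdots+p_{r-1}+q+r-1}=(-1)^{k_1+\cdots+k_r}(-1)^{r-1}$, which already explains the sign $-(-1)^{p+q}$ in Corollary~\ref{thm-linearCES-cor} and $+(-1)^{p+q+r}$ in Corollary~\ref{thm-quadratic-CES-one-cor}; pinning down this factor for every $r$, in particular tracking the extra $(-1)^{r-1}$ against the parity of the depth, is the delicate point that fixes the precise reflection coefficient. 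A secondary obstacle is checking that the lower-order remainder supplied by Theorem~\ref{thm-parityc-CES-one} — which includes products of single polylogarithms and the level-$N$ cyclotomic Euler sums carrying no Hurwitz parameter — uniformly re-expands into multiple Hurwitz and classical cyclotomic polylogarithms of depth $<r$, so that the reduction holds without exception.
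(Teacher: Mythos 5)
First, be aware that the paper does \emph{not} prove this statement: it is proposed as Conjecture~\ref{conjparitycmHZv} precisely because the authors could only establish the depth~$\le 3$ cases (Corollaries~\ref{thm-linearCES-cor} and~\ref{thm-quadratic-CES-one-cor}), so your proposal, if correct, would settle an open problem and must be judged on its own. It does not hold up, and the decisive gap is your ``conversion formula.'' The claim that a degree-$m$ sum $S^{(a)}_{p_1,\ldots,p_m;q}(x_1,\ldots,x_m;x)$ equals $\varepsilon_m$ times a \emph{single} depth-$(m+1)$ Hurwitz polylogarithm plus lower-depth terms is true only for $m\le 2$. The product $\zeta_n(p_1;x_1;a)\cdots\zeta_n(p_m;x_m;a)$ is symmetric in its factors: after you peel off one factor by the tail trick $\zeta_n(p_1;x_1;a)=\Li_{p_1}(x_1;a+1)-\sum_{k>n}x_1^k(k+a)^{-p_1}$, the remaining $m-1$ finite sums all share the same cutoff $n$ and must be expanded against one another, which produces a depth-$(m+1)$ term $\Li_{p_{\sigma(2)},\ldots,p_{\sigma(m)},q,p_1}$ for \emph{every} ordering $\sigma$ of the inner indices --- that is, $(m-1)!$ distinct top-depth polylogarithms, plus collision terms. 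For $m=1,2$ this count is $1$, which is exactly why the paper can invert and obtain its two corollaries; but for $m\ge 3$ (depth $r\ge 4$) the $S$-sum only controls the \emph{symmetrized} sum over orderings of the first $r-2$ slots, e.g. $\Li_{k_1,k_2,k_3,k_4}+\Li_{k_2,k_1,k_3,k_4}$ at $r=4$. Your inversion step --- solving for the individual $\Li_{k_1,\ldots,k_r}(x_1,\ldots,x_r;a)$ --- is therefore impossible inside this framework: the difference of two orderings is invisible to every Euler $S$-sum, Hurwitz-type or not. This symmetrization is the known obstruction of the Flajolet--Salvy method and is the real reason the paper stops at depth~$3$; no amount of sign and permutation bookkeeping repairs it, and an essentially new input (kernels or sums producing nested rather than symmetric configurations) would be required.

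Second, even where the machinery does apply, it yields a sign that conflicts with the conjecture as stated. You correctly computed that Theorem~\ref{thm-parityc-CES-one} at degree $r-1$ contributes the factor $(-1)^{k_1+\cdots+k_r}(-1)^{r-1}$. That depth-dependent $(-1)^{r-1}$ does not cancel: it is why Corollary~\ref{thm-linearCES-cor} ($r=2$) controls $\Li_{p,q}(x,y;a)-(-1)^{p+q}xy\Li_{p,q}\bigl(x^{-1},y^{-1};1-a\bigr)$ with a \emph{minus} sign, while Corollary~\ref{thm-quadratic-CES-one-cor} ($r=3$) has the \emph{plus} sign $+(-1)^{p+q+r}$. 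The conjecture asserts the uniform sign $+(-1)^{k_1+\cdots+k_r}$ for all $r>1$, so for even $r$ your route --- even granting the conversion step --- would produce the opposite combination from the one conjectured (indeed the conjecture as written is already in tension with the paper's own $r=2$ corollary). You flag this as delicate bookkeeping, but it is substantive: either the target statement must carry a depth-dependent sign $(-1)^{k_1+\cdots+k_r+r-1}$, or a genuinely different argument is needed; as written, your proposal cannot deliver the statement it claims to prove. A further, smaller imprecision: the lower-order remainders are \emph{products} of polylogarithms with mixed parameters $a$, $1-a$, and none at all, and re-expanding such products into honest linear combinations by stuffle would reintroduce depth-$r$ terms, so the reduction can only be to polynomial (not linear) combinations --- a looseness the paper shares, but which your write-up should not inherit silently.
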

Evidently, Conjecture 1.2 regarding cyclotomic multiple $t$-values in Reference \cite{Wang-Xu2025} represents a special case of this conjecture.
Actually, the parity result conjectured in Conjecture \ref{conjparitycmHZv} also has a counterpart, known as the symmetry conjecture, which can be stated as follows:
\begin{con}\label{conjsymetrycmHZv}
Let $x_1,\ldots,x_r$ be roots of unity and $a\in \mathbb{C}\setminus \Z$. For $\bfk=(k_1,\ldots,k_r)\in \N^r$ and $(k_1,x_1),\ (k_r,x_r)\neq (1,1)$, we have
\begin{align*}
\Li_{k_1,\ldots,k_r}(x_1,\ldots,x_r;a)\equiv  (-1)^{k_1+\cdots+k_r-1} (x_1\cdots x_r) \Li_{k_r,\ldots,k_1}(x^{-1}_r,\ldots,x_1^{-1};1-a)\quad (\text{mod products}),
\end{align*}
where ``mod products" means discarding all product terms of cyclotomic multiple Hurwitz zeta values with depth $<r$.
\end{con}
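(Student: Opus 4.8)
My plan is to deduce the symmetry from an \emph{exact} reflection of the summation variables, which reduces the statement to the vanishing, modulo products, of a single two-sided nested sum. The analytic engine is the same anti-symmetry $\Phi(s;x)=-\Phi(-s;x^{-1})$ that underlies all the residue computations above.

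\textbf{The reflection substitution.} First I would apply to
\[\Li_{k_1,\ldots,k_r}(x_1,\ldots,x_r;a)=\sum_{0<n_1<\cdots<n_r}\frac{x_1^{n_1}\cdots x_r^{n_r}}{(n_1+a-1)^{k_1}\cdots(n_r+a-1)^{k_r}}\]
the change of index $n_i=1-m_{r+1-i}$. One checks that the region $0<n_1<\cdots<n_r$ corresponds bijectively to $m_1<\cdots<m_r\le 0$, that $n_i+a-1=-(m_{r+1-i}-a)$ (collecting the exponents $k_i$ yields a global sign $(-1)^{k_1+\cdots+k_r}$), and that $x_i^{n_i}=x_i\,(x_i^{-1})^{m_{r+1-i}}$ (yielding the prefactor $x_1\cdots x_r$ together with reversed, inverted arguments). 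This produces the exact identity $\Li_{k_1,\ldots,k_r}(x_1,\ldots,x_r;a)=(-1)^{k_1+\cdots+k_r}(x_1\cdots x_r)\,\Sigma^{-}$, where $\Sigma^{-}$ has the reversed indices $(k_r,\ldots,k_1)$, inverted arguments $(x_r^{-1},\ldots,x_1^{-1})$, shifted denominators $(m_j-a)^{k_{r+1-j}}$, but is summed over the non-positive range $m_1<\cdots<m_r\le 0$. Since for roots of unity the defining series converge only after analytic continuation, this step must be performed on the continued functions (or on suitably regularized partial sums), which is a point requiring care.

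\textbf{Reduction to a two-sided sum.} Writing $\Sigma^{+}:=\Li_{k_r,\ldots,k_1}(x_r^{-1},\ldots,x_1^{-1};1-a)$, which has exactly the same summand as $\Sigma^{-}$ but over the positive range $0<m_1<\cdots<m_r$, the target identity LHS $\equiv(-1)^{k_1+\cdots+k_r-1}(x_1\cdots x_r)\Sigma^{+}$ becomes equivalent to $\Sigma^{+}+\Sigma^{-}\equiv 0\pmod{\text{products}}$. Now $\Sigma^{+}+\Sigma^{-}$ differs from the full two-sided nested sum $\Sigma^{\mathrm{all}}$ (over all strictly increasing integer tuples $m_1<\cdots<m_r$ in $\Z$) only by the ``straddling'' tuples with $m_t\le 0<m_{t+1}$ for some $1\le t\le r-1$; each such contribution factors as a product of a depth-$t$ and a depth-$(r-t)$ nested sum, hence is a product. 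Therefore $\Sigma^{+}+\Sigma^{-}\equiv\Sigma^{\mathrm{all}}\pmod{\text{products}}$, and the whole conjecture collapses to the single claim $\Sigma^{\mathrm{all}}\equiv 0\pmod{\text{products}}$.

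\textbf{Vanishing of the two-sided sum, and the obstacle.} To prove $\Sigma^{\mathrm{all}}\equiv 0\pmod{\text{products}}$ I would run the residue machinery of this paper in the two-sided setting, or equivalently argue by induction on $r$ via the stuffle (quasi-shuffle) product applied to the single two-sided sums $A_j:=\sum_{m\in\Z}y_j^{m}/(m-a)^{l_j}$. The base case $r=1$ is the classical Lerch/Hurwitz reflection formula, expressing $A_j$ as an elementary (trigonometric, $2\pi i$-power) quantity and hence a product; this is precisely the depth-one shadow of $\Phi(s;x)=-\Phi(-s;x^{-1})$. Expanding $A_1\cdots A_r$ by quasi-shuffle produces $\Sigma^{\mathrm{all}}$ together with all of its reorderings and with lower-depth ``diagonal'' coincidence terms, the latter being products or, by the inductive hypothesis, $\equiv 0\pmod{\text{products}}$. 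The main obstacle is exactly what the explicit low-depth theorems above circumvent by brute force: isolating the single ordering $\Sigma^{\mathrm{all}}$ from the symmetric combination of all reorderings, while working modulo products only, which is strictly finer than the ``modulo lower depth'' used in Conjecture \ref{conjparitycmHZv} (irreducible depth-$(r-1)$ terms may not be discarded). Controlling these reordering and boundary contributions, together with the regularization of the continued series in the first step, is where a complete argument must do genuine work, and is the reason the statement is posed as a conjecture.
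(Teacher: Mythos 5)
First, a point of order: the statement you were asked to prove is Conjecture \ref{conjsymetrycmHZv} of the paper, and the paper does not prove it. The only guidance the paper gives is the remark that the symmetry ``can be directly derived'' from Conjecture \ref{conjparitycmHZv} together with the antipode (reversal) relations of \cite{XuZhao2020d}, and the suggestion that the methods of \cite{CharltonHoffman-MathZ2025} might be adapted. So there is no proof in the paper to compare against, and your attempt must stand on its own; you correctly do not claim that it does. That said, your reduction step is sound and follows a genuinely different route from the one the paper sketches: the reflection $n_i=1-m_{r+1-i}$ does convert $\Li_{k_1,\ldots,k_r}(x_1,\ldots,x_r;a)$ exactly into $(-1)^{k_1+\cdots+k_r}(x_1\cdots x_r)\Sigma^{-}$, the straddling tuples do factor into products of lower-depth one-sided sums, and hence the conjecture is equivalent to the vanishing modulo products of the two-sided ``Hurwitz multitangent'' $\Sigma^{\mathrm{all}}$. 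This is the Bouillot--Hirose point of view (cf.\ \cite{Bouillot2014,Hirose2025}) rather than the paper's antipode route, and it has the merit of compressing a symmetry statement into a single reduction statement.

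The genuine gap is in the final step, and while your diagnosis of the difficulty is accurate, the tool you propose cannot close it. The quasi-shuffle expansion of $A_1\cdots A_r$ is commutative and therefore blind to the ordering of the summation variables: it only controls the sum of $\Sigma^{\mathrm{all}}$ over all $r!$ orderings plus diagonal terms. Already at depth $2$ it yields $\Sigma^{\mathrm{all}}_{12}+\Sigma^{\mathrm{all}}_{21}=A_1A_2-D$ with $D=\sum_{m\in\Z}(y_1y_2)^m/(m-a)^{l_1+l_2}$ a bare twisted monotangent, and no induction on depth will separate $\Sigma^{\mathrm{all}}_{12}$ from $\Sigma^{\mathrm{all}}_{21}$. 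What actually breaks the symmetry is Bouillot's partial-fraction mechanism: expanding $1/\big((m_i-a)^{k_i}(m_j-a)^{k_j}\big)$ in powers of $(m_j-m_i)$ and of a single $(m-a)$ collapses the two-sided depth-$r$ sum into combinations of one-sided cyclotomic polylogarithms in the difference variables times twisted monotangents; establishing this cyclotomic--Hurwitz analogue of the multitangent reduction theorem (with the regularization needed when $k_1$ or $k_r$ equals $1$) is the real content a complete proof must supply. Note also an interpretive issue your base case glosses over: a bare monotangent $\sum_{m\in\Z}y^m/(m-a)^k$ equals a \emph{linear combination} of two depth-one Hurwitz polylogarithms, not literally a product; declaring it discardable is precisely the $r=1$ instance of the conjecture itself, so your argument must either take that as part of the definition of ``mod products'' or prove it separately before the induction can even start.
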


Using the antipode relations for multiple polylogarithms established in \cite{XuZhao2020d}, Conjecture \ref{conjsymetrycmHZv} can be directly derived from Conjecture \ref{conjparitycmHZv}.
The symmetric result presented in Conjecture \ref{conjsymetrycmHZv} should be regularizable. In a recent paper by Charlton and Hoffman \cite{CharltonHoffman-MathZ2025}, they established symmetric results for regularized multiple $t$-values. The methods in their paper may also be applied to prove the symmetric result for cyclotomic multiple Hurwitz zeta values in Conjecture \ref{conjsymetrycmHZv} of this paper, and even extend to the regularized setting. This, in turn, could help prove the parity result in Conjecture \ref{conjparitycmHZv}  and its regularized version as well. Due to space limitations, we will not elaborate in detail here. The authors plan to pursue this direction in subsequent work. Interested readers are also encouraged to explore this topic further.

\medskip

{\bf Declaration of competing interest.}
The author declares that he has no known competing financial interests or personal relationships that could have
appeared to influence the work reported in this paper.

{\bf Data availability.}
No data was used for the research described in the article.

{\bf Acknowledgments.}
Hongyuan Rui thanks Ms. Lin for her help.


\begin{thebibliography}{99}

\bibitem{AkiyamaIshikawa}
S. Akiyama and H. Ishikawa, On analytic continuation of multiple $L$-functions and related zeta-functions, in ``Analytic Number Theory", C. Jia and K. Matsumoto (eds.), Devel. Math. 6, Kluwer Acad. Publ., Dordrecht, 2002, pp. 1-16.

\bibitem{AC2020}
H. Alzer and J. Choi, Four parametric linear Euler sums, \emph{J. Math. Anal. Appl.} \textbf{484}(1)(2020), 0022-247X.

\bibitem{Berndt1985}
B.C. Berndt, \emph{Ramanujan's notebooks, II}, Springer, New York, 1985. With a foreword by S. Chandrasekhar. pp.\ 253.

\bibitem{BBD2008}
D. Borwein, J.M. Borwein and  D.M. Bradley, Parametric Euler sum identities, \emph{J. Math. Anal. Appl.} \textbf{316}(1) (2008), pp.\ 328-338.

\bibitem{Borwein-Girgensohn-1996}
J.M. Borwein and R. Girgensohn, Evaluation of triple Euler sums, \emph{Electronic J. Combin.} \textbf{3}(1996), R23.

\bibitem{Brown2021}
F.C.S. Brown, Depth-graded motivic multiple zeta values, \emph{Compos. Math.} \textbf{157}(2021), 529-572.

\bibitem{Bouillot2014}
O. Bouillot, The algebra of multitangent functions, \emph{J. Algebra} \textbf{410}(2014), 148-238.

\bibitem{CharltonHoffman-MathZ2025}
S. Charlton and M.E. Hoffman, Symmetry results for multiple $t$-values, \emph{Math. Z.} \textbf{309}(2025):75.

\bibitem{Euler1776}
L. Euler, Meditationes circa singulare serierum genus, Novi Comm. Acad. Sci. Petropol 20 (1776), 140-186, reprinted in Opera Omnia ser. I, vol. 15, B. G. Teubner,
Berlin (1927), 217-267.

\bibitem{Flajolet-Salvy}
P. Flajolet and B. Salvy, Euler sums and contour integral representations, \emph{Experiment. Math.} \textbf{7}(1)(1998), pp.\  15-35.

\bibitem{Hirose2025}
M. Hirose, An explicit parity theorem for multiple zeta values via multitangent functions, \emph{Ramanujan J.} (2025)67:87.

\bibitem{H1992}
M.E. Hoffman, Multiple harmonic series, \emph{Pacific J.\ Math.} \textbf{152}(1992), pp.\ 275--290.

\bibitem{H2000}
M.E. Hoffman, Quasi-shuffle products, \emph{J. Algebraic Combin.} \textbf{11}(2000), pp.\ 49--68.

\bibitem{H2019}
M.E. Hoffman, An odd variant of multiple zeta values, \emph{Comm. Number Theory Phys.} \textbf{13}(2019), 529--567.

\bibitem{IKZ2006}
K. Ihara, M. Kaneko and D. Zagier, Derivation and double shuffle relations for multiple zeta values, \emph{Compos.\ Math.} \textbf{142}(2006), pp.\ 307--338.

\bibitem{Jarossay2020}
D. Jarossay, Depth reductions for associators, \emph{J. Number Theory} \textbf{217}(2020), pp.\ 163-192.

\bibitem{KanekoTs2019}
M. Kaneko and H. Tsumura, On multiple zeta values of level two, \emph{Tsukuba J.Math.} \textbf{44-2}(2020), pp.\ 213--234.

\bibitem{MXY2021}
M. Kaneko, C. Xu and S. Yamamoto, A generalized regularization theorem and Kawashima's relation for multiple zeta values, \emph{J. Algebra} {\bf 580}(2021), pp.\ 247--263.

\bibitem{Machide2016}
T. Machide, Congruence identities of regularized multiple zeta values involving a pair of index sets, \emph{Int. J. Number Theory} \textbf{12}(2016), 409-426.

\bibitem{Nielsen1906}
N. Nielsen, Handbuch der Theorie der Gammafunktion and Theorie des Integrallogarithmua und ueruumdier Transzendenten, 1906. Reprinted together as Die Gammafunktion, Chelsea, New York,
1965.

\bibitem{Panzer2017}
E. Panzer, The parity theorem for multiple polylogarithms, \emph{J.\ Number Theory} \textbf{172}(2017), pp.\ 93--113.

\bibitem{Rui-Xu2025}
H. Rui and C. Xu, Contour integrations and parity results of cyclotomic Euler sums and multiple polylogarithm function, \emph{J.\ Number Theory}, https://doi.org/10.1016/j.jnt.2025.11.012


\bibitem{Tsu-2004}
H. Tsumura, Combinatorial relations for Euler-Zagier sums, \emph{Acta Arith.} \textbf{111}(2004), pp.\ 27-42.

\bibitem{VH2025}
L.M.N. Vicente and \`{A}.S. Holgado, The Lerch-type zeta function of a recurrence sequence of arbitrary degree, \emph{Mediterr. J. Math.} \textbf{22}(2025):171.

\bibitem{Wang-Xu2025}
Z. Wang and C. Xu, Contour integrations and parity results of cyclotomic Euler $T$-sums and multiple $t$-values, arXiv:2509.06706.

\bibitem{XC2019}
C. Xu, Some evaluations of infinite series involving parametric harmonic numbers, \emph{Int. J. Number Theory} {\bf 15}(7) (2019), pp.\ 1531-1546.

\bibitem{XC2025}
C. Xu, The parity of two types of cyclotomic Euler sums via contour integrals, arXiv:2509.17468.

\bibitem{Xu-Wang2020}
C. Xu and W. Wang, Explicit formulas of Euler sums via multiple zeta values, \emph{J. Symb. Comput.} \textbf{101} (2020), pp.\ 109-127.

\bibitem{Xu-Wang2022}
C. Xu and W. Wang, Two variants of Euler sums, \emph{Monatsh. Math.} \textbf{199}(2022), pp.\ 431-454.

\bibitem{Xu-Wang2023}
C. Xu and W. Wang, Dirichlet type extensions of Euler sums, \emph{Comptes Rendus Math.}, \textbf{361}(2023), 979-1010.

\bibitem{XuZhao2020d}
C.\ Xu and J. Zhao, Explicit relations of some variants of convoluted multiple zeta values, \emph{Ann. Math. Pura Appl.} \textbf{204}(2025), pp.\ 2065--2087.

\bibitem{YuanZh2014a}
H.\ Yuan and J.\ Zhao, Double shuffle relations of double zeta values
and double Eisenstein series of level $N$,
\emph{J.\ London Math.\ Soc.}  \textbf{92}(2) (2015), pp. \ 520--546.

\bibitem{DZ1994}
D. Zagier, Values of zeta functions and their applications, First European Congress
of Mathematics, Volume II, Birkhauser, Boston, \textbf{120}(1994), pp.\ 497--512.

\bibitem{Zhao2007d}
J. Zhao, Analytic continuation of multiple polylogarithms, \emph{Anal.\ Math.} \textbf{33}(2007), pp.\ 301--323.

\bibitem{Z2016}
J. Zhao, \emph{Multiple zeta functions, multiple polylogarithms and their special values}, Series on Number
Theory and its Applications, Vol.~12, World Scientific Publishing Co. Pte. Ltd., Hackensack, NJ, 2016.

\bibitem{Z2024}
J. Zhao, Weighted and restricted sum formulas of Euler sums, \emph{Res. Number Theory} \textbf{10}(4)(2024), Paper No. 87, 40 pp.

\end{thebibliography}
\end{document}